\documentclass[10pt,a4paper]{article}

\usepackage[T1]{fontenc}
\usepackage{lmodern}
\usepackage{amsfonts,amsmath,amssymb,amsthm,mathtools}
\usepackage{enumitem}
\usepackage{microtype}
\usepackage{titlesec}
\usepackage[hidelinks]{hyperref}
\usepackage[a4paper,top=1.7cm,bottom=1.7cm,left=1.8cm,right=1.8cm]{geometry}

\hypersetup{
  pdftitle={The semi-inducibility of the blue-blue-red path on four vertices},
  pdfauthor={Jinghua Deng}
}

\allowdisplaybreaks[1]
\titlespacing*{\section}{0pt}{2.2ex plus .8ex minus .2ex}{1.0ex plus .2ex}
\titlespacing*{\subsection}{0pt}{1.8ex plus .6ex minus .2ex}{.8ex plus .2ex}
\AtBeginDocument{%
  \setlength{\abovedisplayskip}{7pt plus 2pt minus 3pt}%
  \setlength{\belowdisplayskip}{7pt plus 2pt minus 3pt}%
  \setlength{\abovedisplayshortskip}{4pt plus 2pt}%
  \setlength{\belowdisplayshortskip}{5pt plus 2pt minus 2pt}%
  \setlength{\jot}{2pt}%
}

\newtheorem{definition}{Definition}[section]
\newtheorem{theorem}[definition]{Theorem}
\newtheorem{lemma}[definition]{Lemma}
\newtheorem*{fubinitheorem}{Fubini's theorem}

\newcommand{\one}{\mathbf 1}
\newcommand{\Ecal}{\mathcal E}
\newcommand{\Hcal}{\mathcal H}
\newcommand{\gfun}{\psi}
\newcommand{\gcvx}{\underline\psi}
\newcommand{\Freg}{F}
\newcommand{\pstar}{p_*}
\newcommand{\Dcal}{\mathcal D}
\newcommand{\dd}{\,\mathrm d}
\newcommand{\ip}[2]{\left\langle #1,#2\right\rangle}
\newcommand{\LSrandomreference}{%
  \cite[Theorem~2.5 and Corollary~2.6]{LovaszSzegedy}}

\title{\bf\Large The semi-inducibility of the blue--blue--red path on four vertices}
\author{Jinghua Deng\thanks{Email: Jinghua\_deng@163.com}\\
\small School of Data Science and Engineering, South China Normal University}
\date{July 11, 2026}

\begin{document}
\maketitle

\begin{abstract}
For an $n$-vertex graph $G$, let $N(H_3,G)$ be the number of injective labeled
copies of the red-blue path $H_3$ for which the two blue pairs are mapped to
non-edges of $G$ and the red pair is mapped to an edge of $G$. We determine
the maximum limiting value of $N(H_3,G)/n^4$ and give an extremal construction,
which is the disjoint union of a clique and an asymptotically regular graph.
The proof uses weighted vertex quotients and degree-square tie-breaking. We
thereby resolve the exceptional four-vertex case left open in the recent
classification of non-complete red-blue graphs.
\end{abstract}

\section{Introduction}\label{sec:introduction}

The inducibility problem, introduced by Pippenger and
Golumbic~\cite{PippengerGolumbic}, asks how large the asymptotic density of
induced copies of a fixed graph can be. Exact values are known for several
families and individual small graphs; see, for example,
\cite{BrownSidorenko,BollobasEgawaHarrisJin,HatamiHirstNorine,
BodnarPikhurkoFlag,BodnarGaoLeonLiuPikhurkoSun}. Nevertheless, even
four-vertex examples can require substantial work. The principal approaches
include graph-limit compactness~\cite{LovaszSzegedy,Lovasz}, which turns the
problem into an optimization over graphons, the flag algebra method of
Razborov~\cite{Razborov}, and symmetrization and stability arguments for
identifying extremal constructions.

The semi-inducibility problem, introduced by Basit, Granet, Horsley,
K\"undgen and Staden~\cite{BasitEtAl}, is a red-blue extension of inducibility.
A \emph{red-blue graph} $H$ is a graph whose edges are colored red or blue;
pairs which are not edges of $H$ are left unconstrained. We view an $n$-vertex
host graph $G$ as a complete red-blue graph by coloring its edges red and its
non-edges blue. A semi-induced copy of $H$ in $G$ is an injective map
$\phi:V(H)\to V(G)$ such that
\begin{align*}
 \phi(x)\phi(y)&\in E(G)\quad\text{whenever $xy$ is red},&
 \phi(x)\phi(y)&\notin E(G)\quad\text{whenever $xy$ is blue}.
\end{align*}
Writing $|V(H)|=h$, define
\begin{align*}
 N(H,G)&\coloneqq
 \#\{\text{semi-induced injective labeled copies of $H$ in $G$}\},&
 p(H,G)&\coloneqq\frac{N(H,G)}{n^h}.
\end{align*}
The semi-inducibility problem asks for the largest limiting value of
$p(H,G)$ over all $n$-vertex host graphs. If every pair of $H$ is prescribed,
this is the usual inducibility problem; if only red pairs are prescribed, it
reduces to ordinary subgraph density.

Basit et al.~\cite{BasitEtAl} obtained sharp or asymptotically sharp results
for alternating walks, alternating cycles of length divisible by four, and
four-cycles under every color pattern. Chen and Noel~\cite{ChenNoel} later
proved that the alternating six-cycle is asymptotically maximized by the
uniformly random coloring, settling the first open alternating-cycle case.
For four vertices, Bodn\'ar and Pikhurko~\cite{BodnarPikhurko} considered all
non-complete red-blue graphs and resolved every remaining case except the
three-edge path colored blue--blue--red, together with its equivalent forms.

Recent work also treats semi-inducibility under a prescribed host-graph edge
density. Balogh, Lidick\'y, Mubayi, Pfender and Volec~\cite{BaloghEtAl}
obtained sharp profiles for several small red-blue graphs, and Deng and
Hou~\cite{DengHou} determined both profiles of the semi-induced four-vertex
star. For the unrestricted problem, define
\begin{align}\label{eq:I-H3}
 I(H)\coloneqq\lim_{n\to\infty}\max_{|V(G)|=n}p(H,G).
\end{align}
The existence of the limit follows from the standard blow-up argument, or
from compactness of dense graph limits~\cite{BasitEtAl,Lovasz}.

In this paper we determine the exceptional four-vertex case isolated by
Bodn\'ar and Pikhurko~\cite{BodnarPikhurko}. Let $H_3$ be the red-blue graph on
ordered vertices
$v_0,v_1,v_2,v_3$ in which $v_0v_1$ and $v_1v_2$ are blue, $v_2v_3$ is red,
and the remaining three pairs are uncolored.

For the extremal construction, let $1/2\le a\le1$ and
$0\le x\le1-a$. Partition $V(G_n)=A_n\cup B_n$ with
\begin{align*}
 |A_n|=(a+o(1))n,\qquad |B_n|=(1-a+o(1))n.
\end{align*}
Put all edges inside $A_n$, no edges between $A_n$ and $B_n$, and choose
$G_n[B_n]$ to be asymptotically $xn$-regular. The limiting red edge density is
$\beta(a,x)=a^2+(1-a)x$.
The normalized red and blue degrees are $a,1-a$ on $A_n$ and $x,1-x$ on
$B_n$, respectively.
Up to $o(1)$, choosing the first edge blue and the last edge red contributes
$\beta(a,x)(1-\beta(a,x))$. We subtract the choices for which the middle pair
is red. When this pair lies in $A_n$, its contribution is
$a^2(1-a)a=a^3(1-a)$. When it lies in $B_n$, its contribution is
$(1-a)x(1-x)x=(1-a)x^2(1-x)$. Hence the limiting $H_3$-density is
\begin{align}\label{eq:F-construction}
 F(a,x)
  ={}&(a^2+(1-a)x)(1-a^2-(1-a)x)-a^3(1-a)-(1-a)x^2(1-x).
\end{align}

Let $x_*\in(0,1/4)$ be the unique root of
\begin{align}\label{eq:cubic-d}
 32x^3-40x^2+13x-1=0,
\end{align}
and put
\begin{equation}\label{eq:star-parameters}
 a_*\coloneqq\frac{4x_*^2-7x_*+2}{3-8x_*},\qquad
 \lambda_*\coloneqq\frac{x_*}{1-a_*},\qquad
 \pstar\coloneqq F(a_*,x_*).
\end{equation}
\begin{theorem}\label{thm:main}
For the blue--blue--red path $H_3$, with the parameters defined in
\eqref{eq:cubic-d} and \eqref{eq:star-parameters},
\begin{align*}
 I(H_3)=\pstar.
\end{align*}
Moreover, equality is attained by a graph sequence of the form
\begin{align*}
 G_n=K_{(a_*+o(1))n}\sqcup R_n,
\end{align*}
where $|V(R_n)|=(1-a_*+o(1))n$ and every vertex of $R_n$ has degree
$(x_*+o(1))n$, equivalently relative degree $\lambda_*+o(1)$ inside $R_n$.
\end{theorem}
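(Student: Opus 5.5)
We pass to graphons. By compactness of dense graph limits \LSrandomreference, $I(H_3)$ equals the maximum over graphons $W$ of
\begin{align*}
 t(W)=\int (1-W(v_0,v_1))(1-W(v_1,v_2))W(v_2,v_3),
\end{align*}
where the integral is over $v_0,\dots,v_3\in[0,1]$. Write $d(v)=\int W(v,y)\dd y$ for the red degree and $e=\int d$ for the red density. Integrating out $v_0$ and $v_3$ gives
\begin{align*}
 t(W)=\iint (1-d(u))\,(1-W(u,v))\,d(v)\dd u\dd v .
\end{align*}
So $t(W)=e(1-e)-\ip{W}{(1-d)\otimes d}$, and the symmetrized quadratic form is
\begin{align*}
 t(W)=e(1-e)-\tfrac12\iint W(u,v)\bigl[(1-d(u))d(v)+(1-d(v))d(u)\bigr].
\end{align*}
The lower bound $I(H_3)\ge\pstar$ is immediate from the construction and \eqref{eq:F-construction}: the sequence $G_n$ of the statement attains $F(a_*,x_*)$.

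For the upper bound, fix an optimal $W$ and derive first-order conditions. Consider perturbations $W\mapsto W+\epsilon\Delta$ supported where $0<W<1$. Stationarity says the pointwise derivative $g(u,v)$ is $0$ on $\{0<W<1\}$, at most $0$ on $\{W=1\}$, and at least $0$ on $\{W=0\}$. Here $g(u,v)$ collects the direct term $-(1-d(u))d(v)-(1-d(v))d(u)$ together with terms linear in $d(u)$ and $d(v)$ that come from the change in degrees. Explicitly, $g(u,v)=\phi(u)+\phi(v)-(d(u)+d(v))+2d(u)d(v)$ for some function $\phi$ of the degree. Since $g$ depends on $(u,v)$ only through $(d(u),d(v))$ in this way, $W$ is a threshold-type graphon in degree space.

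Next we reduce to finitely many degree classes. Group vertices by degree (the weighted vertex quotient). Then we apply a degree-square tie-breaking step: among optimal graphons choose one maximizing $\int d^2$ (or minimizing it). With this choice, two vertices of equal degree can be merged by averaging their neighbourhoods without lowering $t$, and vertices of distinct degrees impose rigid constraints. The structure of $g$ as "additive plus $2d(u)d(v)$" should force at most two degree values $\alpha>\beta$. It should also force $W\equiv1$ on the $\alpha$-class, $W\equiv0$ across the classes, and $W$ merely regular on the $\beta$-class. A middle pair inside the high class contributes $a^2$, which forces the clique. An arbitrary graphon with degrees given by the class structure then has the same $t$, since $t$ depends only on $e$ and on $\iint W(1-d)\otimes d$, and the latter becomes a function of the class degrees. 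This yields $t(W)\le\max F(a,x)$ over the admissible region. The case of a single degree class is the regular case with value $\max_x x(1-x)(1-x)\cdot$(corresponding), covered by $a=0$ or $a=1$ boundaries. We check it separately.

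Finally we do the two-variable calculus on $F$. From $\partial_xF=0$, solve $a$ as the rational function $a=(4x^2-7x+2)/(3-8x)$. Substituting into $\partial_aF=0$ gives the cubic \eqref{eq:cubic-d}. We then compare with the boundary values $a=1/2$, $a=1$ and $x\in\{0,1-a\}$ to confirm that the interior critical point is the maximum, $\pstar$.

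The main obstacle is the structural step: showing that an optimal $W$ has at most two degree classes with the clique/empty/regular pattern. Stationarity alone allows mixed threshold patterns among many degree levels. I expect to need a second-variation argument, swapping $W$ between two degree levels using the product term $2d(u)d(v)$, combined with the tie-breaking on $\int d^2$ to kill degenerate ties.
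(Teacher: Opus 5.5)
Your lower bound is fine, and the final calculus is fine in outline. The upper bound, however, rests on a structural step that you only assert (``should force'', ``I expect''), and the mechanism you sketch for it does not work.

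No threshold structure follows from $g(u,v)=\phi(u)+\phi(v)-d(u)-d(v)+2d(u)d(v)$. For fixed $u$, $v\mapsto g(u,v)$ is a constant plus $\phi(v)+(2d(u)-1)d(v)$, and stationarity says nothing about how $\phi$ varies with the degree. Two smaller slips: your KKT signs are reversed (for a maximizer the derivative is $\le0$ on $\{W=0\}$ and $\ge0$ on $\{W=1\}$), and $\phi$ is a function of the degree only after you replace $W$ by the degree quotient $\mathbb E[W\mid\sigma(d)\otimes\sigma(d)]$.

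The paper needs a separate ingredient here, the half-slope inequality of Lemma~\ref{lem:half-slope}: merging two small sets of different degrees shows that $\chi(t)=t/2-B(t)$ is nondecreasing. Then $L_z(u)=1-2\beta-B(z)+(z-\tfrac12)u+\chi(u)$ is strictly increasing only for rows with $z>1/2$. So only high-degree rows are threshold sets (upper sets of weight $z$), while low-degree rows stay unconstrained, and no ``at most two degree classes'' statement comes out of this analysis.

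From there the paper proceeds as follows:
\begin{itemize}
\item a median core of weight $1/2$ lies in every high-degree neighbourhood, and $\chi$ is constant on it;
\item the tie-break then collapses the high-degree part to a clique atom; the tie-break must be \emph{minimization} of $\Dcal$, since every quotient lowers $\Dcal$ by $\|z\|_2^2$, and with maximization the rigidity of Lemma~\ref{lem:quotient-rigid} is lost;
\item a second variation (Lemma~\ref{lem:fractional-exclusion}) excludes fractional edges at high-degree vertices.
\end{itemize}
Even this does not pin down the structure. Three cases are ruled out only by explicit global estimates compared with the benchmark $F(3/5,9/80)>3/20$ or with $\pstar$:
\begin{itemize}
\item no vertex of degree $>1/2$ (the bound $153/1024$);
\item a high-degree atom of weight $\le1/2$ (Lemma~\ref{lem:small-atom});
\item a majority atom whose neighbourhood extends beyond it (Lemma~\ref{lem:negative-H} and the equality case of Lemma~\ref{lem:F0-extended}).
\end{itemize}
Your plan has no substitute for these quantitative exclusions.

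You also aim at the wrong endpoint, namely regularity of the low class, which the paper never proves and does not need. Once $W=K_A\sqcup W_B$ with $|A|>1/2$ and $d\le1/2$ on $B$, the paper writes $\Phi=\beta(1-\beta)-\int\gfun(d)-\Ecal_W(d)$ with $\gfun(t)=t^2(1-t)$ and discards $\Ecal_W(d)\ge0$. It then applies Jensen to the lower convex envelope $\gcvx$ of $\gfun$ on $[0,1/2]$; the envelope is needed because $\gfun$ is concave on $[1/3,1/2]$. This gives $\Phi\le F_0(a,x)$ with $x$ ranging over all of $[0,1/2]$. The final optimization is therefore Lemma~\ref{lem:F0-extended} for $F_0$ on this extended domain, not just calculus on $F$ over $x\le1-a$.

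Your remark that $t$ depends only on $e$ and $\iint W(1-d)\otimes d$ does not help without two-valued degrees, which is exactly what you have not shown. A smaller slip: $\partial_xF=0$ is quadratic in $a$. The rational formula for $a$ comes from subtracting twice that equation from the reduced $\partial_aF=0$ equation.
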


The proof is organized as follows. Section~\ref{sec:weighted-model} gives the
weighted reduction and the degree identity. In
Section~\ref{sec:split-regular} we optimize the proposed construction. The
structural reduction is proved in Section~\ref{sec:structural-reduction}: a
degree-square-minimal maximizer is forced to be the disjoint union of a clique
of weight greater than $1/2$ and a graphon of maximum degree at most $1/2$.
Section~\ref{sec:main-proof} then reduces the upper bound to the two-variable
optimization from Section~\ref{sec:split-regular}.

\section{Preliminary}\label{sec:weighted-model}

\begin{fubinitheorem}[{Folland~\cite[Theorem~2.37]{Folland}}]
Let $p,q\ge 1$, and let $f:[0,1]^{p+q}\to\mathbb R$ be bounded and
measurable. Then $f$ is integrable and
\[
 \int_{[0,1]^{p+q}}f(x,y)\dd(x,y)
 =\int_{[0,1]^p}\left(\int_{[0,1]^q}f(x,y)\dd y\right)\dd x
 =\int_{[0,1]^q}\left(\int_{[0,1]^p}f(x,y)\dd x\right)\dd y.
\]
\end{fubinitheorem}

Let $V$ be a finite vertex set, and assign each vertex $i\in V$ a positive
weight $\alpha_i$, with $\sum_{i\in V}\alpha_i=1$. Let
$W=(W_{ij})_{i,j\in V}$ be a symmetric matrix with entries in $[0,1]$, where
$W_{ij}$ is the red weight assigned to the pair $ij$. Thus
$(V,\alpha,W)$ is a finite weighted red graph. Under a blow-up, the vertex $i$
is replaced by a class of relative size $\alpha_i$, and $W_{ij}$ is realized
as the red edge density between the classes corresponding to $i$ and $j$;
$W_{ii}$ is the red density inside the class corresponding to $i$. Put
\begin{equation*}\tag{4a}\label{eq:weighted-parameters}
 \overline W_{ij}:=1-W_{ij},\qquad
 d_i:=\sum_j\alpha_jW_{ij},\qquad
 r_i:=1-d_i,\qquad
 \beta:=\sum_i\alpha_id_i.
\end{equation*}
We extend the density to this weighted model by choosing
$i_0,i_1,i_2,i_3$ independently from $V$, with
$\Pr(i_k=i)=\alpha_i$. For a fixed ordered choice, the factor
$\overline W_{i_0i_1}\overline W_{i_1i_2}W_{i_2i_3}$ is its contribution to
the event that the first two prescribed pairs are blue and the last one is
red. Thus the weighted $H_3$-density is
\begin{align}
 p(H_3,W)\coloneqq\Phi(W)
 :={}&\sum_{i_0,i_1,i_2,i_3\in V}
 \alpha_{i_0}\alpha_{i_1}\alpha_{i_2}\alpha_{i_3}
 \overline W_{i_0i_1}\overline W_{i_1i_2}W_{i_2i_3}.
 \label{eq:Phi-direct}
\end{align}
The original density $p(H_3,G)$ counts only injective tuples, whereas the
indices in \eqref{eq:Phi-direct} need not be distinct. Their discrepancy for
ordinary graphs is quantified in Lemma~\ref{lem:sequence-bridge}.

Fix the middle ordered pair $i=i_1$ and $j=i_2$. The total weight of
choices for $i_0$ joined to $i$ by a blue edge is
$\sum_{i_0}\alpha_{i_0}\overline W_{i_0i}=r_i$, while the total weight of
choices for $i_3$ joined to $j$ by a red edge is
$\sum_{i_3}\alpha_{i_3}W_{ji_3}=d_j$. Hence
\begin{equation}
 \Phi(W)=\sum_{i,j}\alpha_i\alpha_j\overline W_{ij}r_id_j
 =\sum_{i,j}\alpha_i\alpha_j\overline W_{ij}r_i(1-r_j).
 \label{eq:Phi-middle}
\end{equation}
Since $\sum_j\alpha_j\overline W_{ij}=r_i$, expanding
\eqref{eq:Phi-middle} gives the alternative form
\eqref{eq:Phi-Qform}:
\begin{equation}
 \Phi(W)=\sum_i\alpha_ir_i^2
 -\sum_{i,j}\alpha_i\alpha_j\overline W_{ij}r_ir_j.
 \label{eq:Phi-Qform}
\end{equation}

A graphon is a symmetric measurable function
$W:[0,1]^2\to[0,1]$, where $W(x,y)$ is interpreted as the red weight of the
pair $xy$. To represent the finite weighted graph $(V,\alpha,W)$, partition
$[0,1]$ into intervals $(I_i)_{i\in V}$ with $|I_i|=\alpha_i$, and set
$W(x,y):=W_{ij}$ for $x\in I_i$ and $y\in I_j$.
We use the same symbol $W$ for the weighted matrix and its associated step
graphon. The interval $[0,1]$ has total measure $1$, corresponding to the
normalization $\sum_i\alpha_i=1$. Define
\[
 d_W(x):=\int_0^1W(x,y)\dd y,
 \qquad
 \beta_W:=\int_0^1d_W(x)\dd x
 =\int_{[0,1]^2}W(x,y)\dd x\dd y.
\]
If $x\in I_i$, then
\[
 d_W(x)=\sum_j\alpha_jW_{ij}=d_i,
 \qquad
 \beta_W=\sum_i\alpha_id_i=\beta.
\]
Thus $d_W(x)$ is the normalized red degree of $x$ and $\beta_W$ is the red
edge density. With $\overline W(x,y):=1-W(x,y)$, define
\begin{equation*}\tag{6a}\label{eq:Phi-graphon}
 p(H_3,W)\coloneqq\Phi(W)
 :=\int_{[0,1]^4}
 \overline W(x_0,x_1)\overline W(x_1,x_2)W(x_2,x_3)
 \dd x_0\dd x_1\dd x_2\dd x_3.
\end{equation*}
We write $d=d_W$ and $\beta=\beta_W$ when $W$ is clear.

For an ordinary graph $G$ on $n$ vertices, let $W_G$ be the finite weighted
red graph with vertex set $V(G)$, weights $1/n$, and edge densities equal to
the adjacency matrix of $G$.

For a graphon $U$, let $G(n,U)$ be the random graph obtained by sampling
$z_1,\ldots,z_n$ independently and uniformly from $[0,1]$ and, conditional
on these points, including each edge $ij$ independently with probability
$U(z_i,z_j)$. Write $t(F,\cdot)$ for the ordinary homomorphism density of
$F$.

\begin{lemma}[Lov\'asz and Szegedy~\LSrandomreference]
  \label{lem:W-random-convergence}
For every graphon $U$, almost surely,
\[
 t(F,G(n,U))\longrightarrow t(F,U)
\]
simultaneously for every finite simple graph $F$.
\end{lemma}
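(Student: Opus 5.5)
The plan is the standard concentration argument: compute the expectation of an injective version of the homomorphism density, show exponential concentration around it, and then pass to almost sure convergence by Borel--Cantelli and a countable union over $F$. Fix a finite simple graph $F$ with $k$ vertices. I would work first with the injective density $t_{\mathrm{inj}}(F,G)$, the probability that a uniformly random \emph{injective} map $V(F)\to V(G)$ sends edges of $F$ to edges of $G$. The non-injective maps form a fraction at most $\binom k2/n$ of all maps $V(F)\to V(G)$. Hence
\[
 |t(F,G)-t_{\mathrm{inj}}(F,G)|\le \binom k2\frac1n
\]
deterministically for every $n$-vertex graph $G$, and it suffices to prove $t_{\mathrm{inj}}(F,G(n,U))\to t(F,U)$ almost surely.

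\textbf{Expectation.} For a fixed injective map $\phi$ from $V(F)$ to $[n]$, the points $z_{\phi(v)}$ are independent and uniform on $[0,1]$. Conditional on these points, the edges $\phi(u)\phi(v)$ for $uv\in E(F)$ are distinct pairs, hence independent, with probabilities $U(z_{\phi(u)},z_{\phi(v)})$. Taking expectations and applying Fubini's theorem to the bounded measurable integrand gives
\[
 \mathbb E\, t_{\mathrm{inj}}(F,G(n,U))=t(F,U)
\]
exactly, for every $n\ge k$.

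\textbf{Concentration.} I would expose $G(n,U)$ vertex by vertex. Let the $m$-th step reveal $z_m$ together with the coin flips for the pairs $\{i,m\}$ with $i<m$. These $n$ steps are independent random variables, and $t_{\mathrm{inj}}(F,G(n,U))$ is a function of them. Changing the data of step $m$ only affects the injective maps whose image contains $m$. These form a fraction $k/n$ of all injective maps, so the function has bounded differences $k/n$. McDiarmid's inequality (equivalently, Azuma applied to the vertex-exposure martingale) then yields
\[
 \Pr\bigl(|t_{\mathrm{inj}}(F,G(n,U))-t(F,U)|>\varepsilon\bigr)\le 2\exp\bigl(-2\varepsilon^2 n/k^2\bigr).
\]
The one point requiring care is that the coin flips are conditionally dependent on the $z$'s. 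This is why each block must contain both $z_m$ and the flips to earlier vertices; with that grouping the blocks are genuinely independent. Equivalently, each flip can be realized as $\one[\xi_{im}<U(z_i,z_m)]$ with independent uniform variables $\xi_{im}$.

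\textbf{Almost sure convergence.} The bound above is summable in $n$ for each fixed $\varepsilon=1/\ell$. By Borel--Cantelli, almost surely $|t_{\mathrm{inj}}(F,G(n,U))-t(F,U)|\le 1/\ell$ for all large $n$. Intersecting over $\ell\in\mathbb N$ gives almost sure convergence for this fixed $F$. Since there are countably many finite simple graphs up to isomorphism, intersecting these countably many probability-one events gives convergence simultaneously for all $F$. Combined with the $O(1/n)$ comparison between $t$ and $t_{\mathrm{inj}}$, this proves the lemma.

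I expect the main obstacle to be setting up the bounded-differences step correctly, namely choosing the vertex-exposure blocks so that they are independent. The remaining steps are routine.
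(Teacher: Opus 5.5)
Your argument is correct and is essentially the proof in the cited source: the paper gives no proof of its own but quotes Lov\'asz--Szegedy (Theorem~2.5 and Corollary~2.6), and they take exactly this route---unbiasedness of $t_{\mathrm{inj}}$, a vertex-exposure martingale with increments at most $k/n$ and Azuma's inequality, the $O(k^2/n)$ comparison between $t$ and $t_{\mathrm{inj}}$, and Borel--Cantelli. One wording fix: if the blocks consist of $z_m$ and the actual edge indicators for the pairs $\{i,m\}$ with $i<m$, they are not independent, because those indicators depend on the earlier $z_i$; independence comes from replacing the indicators by the auxiliary uniforms $\xi_{im}$, which you also propose, while the grouping by vertex is what gives the bounded differences $k/n$.
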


\begin{lemma}[Lov\'asz and Szegedy~\cite{LovaszSzegedy} and
  Lov\'asz~\cite{Lovasz}]
  \label{lem:dense-graph-compactness}
Let $(G_n)$ be a sequence of finite graphs with $|V(G_n)|\to\infty$. Then
there are $n_k\to\infty$ and a graphon $U$ such that
\[
 t(F,W_{G_{n_k}})\longrightarrow t(F,U)
\]
simultaneously for every finite simple graph $F$.
\end{lemma}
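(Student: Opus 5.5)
The statement immediately above is Lemma~\ref{lem:dense-graph-compactness}, the compactness of dense graph sequences. It is cited from Lov\'asz and Szegedy, so I only outline the standard argument.

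The plan has two parts: pass to a subsequence along which all homomorphism densities converge, and then realize the limiting values by a single graphon.

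\textbf{Step 1 (diagonal extraction).} There are countably many finite simple graphs $F$ up to isomorphism; enumerate them as $F_1,F_2,\ldots$. For each $F$, the sequence $t(F,W_{G_n})$ lies in $[0,1]$. By Bolzano--Weierstrass, extract a subsequence along which $t(F_1,\cdot)$ converges. Inside it, extract a further subsequence along which $t(F_2,\cdot)$ converges, and so on. Taking the diagonal sequence $n_k$ gives $t(F,W_{G_{n_k}})\to\tau(F)$ for every $F$ simultaneously, for some function $\tau$ on finite simple graphs.

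\textbf{Step 2 (representing $\tau$ by a graphon).} This is the main obstacle: we must show $\tau=t(\cdot,U)$ for a single graphon $U$. I would follow the regularity-lemma proof of Lov\'asz--Szegedy, which has four parts.
\begin{enumerate}[label=(\roman*)]
\item For each $k$, apply the weak (Frieze--Kannan) regularity lemma at a sequence of precisions $\varepsilon_m\to0$. This produces step graphons $W_{k,m}$ with at most $M(\varepsilon_m)$ steps, with $\|W_{G_{n_k}}-W_{k,m}\|_\square\le\varepsilon_m$, and with the partition at level $m+1$ refining the one at level $m$.
\item Since each $W_{k,m}$ is described by finitely many bounded parameters, pass to a further diagonal subsequence so that for every $m$ the parts sizes and step values converge, giving limit step functions $U_m$.
\item By the refinement property, $(U_m)$ forms a bounded martingale with respect to the increasing partitions. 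The martingale convergence theorem then gives an almost-everywhere limit $U$.
\item Transfer the densities to $U$ using the counting lemma $|t(F,W)-t(F,W')|\le e(F)\|W-W'\|_\square$. This yields $|\tau(F)-t(F,U_m)|\le e(F)\varepsilon_m$. Since $t(F,U_m)\to t(F,U)$ by dominated convergence, we conclude $\tau(F)=t(F,U)$.
\end{enumerate}

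An alternative for Step 2 is to avoid the regularity machinery. Here one checks that $\tau$ is multiplicative and reflection positive, and invokes the Lov\'asz--Szegedy characterization of graphon parameters. The regularity route is more self-contained, however, so I would present that one and cite \cite{LovaszSzegedy,Lovasz} for the details of the martingale step.
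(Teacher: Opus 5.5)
The paper does not prove this lemma; it quotes it from Lov\'asz--Szegedy and Lov\'asz's book. Your outline (diagonal extraction, nested weak-regularity partitions, convergence of the finitely many block parameters, martingale convergence, and the counting lemma) is the standard proof behind that citation, so it is correct, with one clarification: the $W_{k,m}$ in (i) must be the block averages $\mathbb E[W_{G_{n_k}}\mid\mathcal P_{k,m}\otimes\mathcal P_{k,m}]$, since that is what makes $(U_m)$ a martingale in (iii). Your closing alternative probably does not avoid the regularity machinery, because Lov\'asz and Szegedy, as far as I recall, prove the reflection-positivity characterization using the existence of limit graphons.
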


\begin{lemma}\label{lem:sequence-bridge}
  Uniformly over all $n$-vertex graphs $G$,
  \begin{equation}
  p(H_3,G)=\Phi(W_G)+O\left(\frac1n\right).
  \label{eq:finite-to-weighted}
  \end{equation}
  Moreover,
  \begin{equation}
  I(H_3)
  =\sup_{W\text{ finite weighted}}\Phi(W)
  =\max_{W\text{ graphon}}\Phi(W).
  \label{eq:max-weighted}
  \end{equation}
\end{lemma}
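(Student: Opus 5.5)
We first prove \eqref{eq:finite-to-weighted} by comparing $N(H_3,G)$ with the unrestricted sum defining $\Phi(W_G)$. For $W=W_G$ all weights equal $1/n$ and all entries lie in $\{0,1\}$. Hence $n^4\Phi(W_G)$ counts all ordered $4$-tuples $(i_0,i_1,i_2,i_3)\in V(G)^4$ with $i_0i_1,i_1i_2\notin E(G)$ and $i_2i_3\in E(G)$. We use the convention that a vertex is not adjacent to itself, so $W_{ii}=0$ and $\overline W_{ii}=1$.

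The injective tuples among these are exactly the semi-induced labeled copies, so they contribute $N(H_3,G)$. Every remaining tuple has at least one coincidence $i_k=i_l$ with $k<l$. For each of the six choices of $\{k,l\}$ there are at most $n^3$ such tuples. Since every summand lies in $[0,1]$, we get
\[
 0\le n^4\Phi(W_G)-N(H_3,G)\le 6n^3,
\]
and dividing by $n^4$ gives \eqref{eq:finite-to-weighted} with a constant independent of $G$.

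For \eqref{eq:max-weighted} we prove $I(H_3)\le \max_{\text{graphon}}\Phi$, then $\sup_{\text{graphon}}\Phi\le\sup_{\text{finite}}\Phi\le I(H_3)$, and finally that the graphon supremum is attained.

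\emph{Upper bound.} Pick graphs $G_n$ on $n$ vertices with $p(H_3,G_n)\to I(H_3)$. The limit in \eqref{eq:I-H3} exists, as noted earlier. Lemma~\ref{lem:dense-graph-compactness} gives a subsequence and a graphon $U$ with $t(F,W_{G_{n_k}})\to t(F,U)$ for every simple $F$. Expand the integrand $(1-W)(1-W)W$ by inclusion--exclusion. This writes $\Phi$ as a signed combination of homomorphism densities of subgraphs of the path $v_0v_1v_2v_3$ that contain the edge $v_2v_3$: namely $K_2$ (with isolated vertices), $P_3$ in two forms, and $P_4$. These densities coincide with the integrals of the corresponding products of $W$, and the isolated vertices contribute factors of $1$. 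So $\Phi(W)=t(K_2,W)-2t(P_3,W)+t(P_4,W)$, after identifying isomorphic terms. Hence $\Phi(W_{G_{n_k}})\to\Phi(U)$, and by the first part $I(H_3)=\Phi(U)$.

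\emph{Lower bounds.} For a step graphon, $\Phi$ of the step graphon equals $\Phi$ of the finite weighted graph, so the finite supremum is at most the graphon supremum. Conversely, for any graphon $U$, Lemma~\ref{lem:W-random-convergence} gives almost surely $\Phi(W_{G(n,U)})\to\Phi(U)$, through the same polynomial in homomorphism densities. By \eqref{eq:finite-to-weighted} this yields $p(H_3,G(n,U))\to\Phi(U)$, so $\Phi(U)\le I(H_3)$. Taking $U$ to be a step graphon gives $\sup_{\text{finite}}\Phi\le I(H_3)$.

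\emph{Attainment.} Combining the three inequalities, $I(H_3)=\Phi(U)$ with $\Phi(U)\le\sup_{\text{graphon}}\Phi\le I(H_3)$. Therefore the graphon supremum equals $I(H_3)$ and is attained at $U$, which justifies writing $\max$.

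The only delicate point is the identity expressing $\Phi$ as a polynomial in homomorphism densities of \emph{simple} graphs, which is what allows Lemmas~\ref{lem:W-random-convergence} and~\ref{lem:dense-graph-compactness} to apply. Since the path has no repeated edges, this expansion is a direct computation.
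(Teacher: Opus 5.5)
Your route is the paper's: count the non-injective tuples (your $6n^3$ is a slightly weaker form of the exact $n^4-(n)_4$), write $\Phi$ via homomorphism densities, use $G(n,U)$ to get $\Phi(U)\le I(H_3)$, and apply compactness to near-optimal graphs for attainment. However, the inclusion--exclusion identity you call ``the only delicate point'' is wrong. Expanding $(1-W_{01})(1-W_{12})W_{23}$ gives $W_{23}-W_{01}W_{23}-W_{12}W_{23}+W_{01}W_{12}W_{23}$. The edge set $\{v_0v_1,v_2v_3\}$ is a matching $K_2\sqcup K_2$, not a second copy of $P_3$. The correct identity, as in the paper, is
\[
 \Phi(U)=t(K_2,U)-t(K_2\sqcup K_2,U)-t(P_3,U)+t(P_4,U).
\]
Your formula $t(K_2)-2t(P_3)+t(P_4)$ agrees with this only when $t(K_2)^2=t(P_3)$, for instance for constant graphons. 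For $U=\one_{[0,a]^2}$ it gives $a^2(1-a)^2$, whereas $\Phi(U)=a^2(1-a)$. The argument survives, because Lemmas~\ref{lem:W-random-convergence} and~\ref{lem:dense-graph-compactness} apply to every finite simple graph, including $K_2\sqcup K_2$ (whose density is just $t(K_2,\cdot)^2$). Still, the displayed identity must be corrected.

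Second, you never establish $I(H_3)\le\sup_{W\text{ finite weighted}}\Phi(W)$. Your plan announces $\sup_{\text{graphon}}\Phi\le\sup_{\text{finite}}\Phi$, but the step-graphon remark proves the opposite inequality. Your remaining steps give only $\sup_{\text{finite}}\Phi\le I(H_3)$, so the first equality in \eqref{eq:max-weighted} is left half-proved. The missing direction follows immediately from your first part. There you showed $p(H_3,G)\le\Phi(W_G)$, and $W_G$ is a finite weighted graph, so $\max_{|V(G)|=n}p(H_3,G)\le\sup_{\text{finite}}\Phi$ for every $n$. Equivalently, note that $\Phi(W_{G_{n_k}})\to I(H_3)$ along your subsequence. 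This is exactly how the paper's proof opens its second half. With these two repairs your proof is complete.
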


\begin{proof}
For $W_G$, restricting the sum in \eqref{eq:Phi-direct} to pairwise distinct
indices gives exactly $N(H_3,G)/n^4=p(H_3,G)$. Let $R_G$ be the number of
non-injective ordered four-tuples that contribute to this sum. Then
\begin{equation}
 \Phi(W_G)=p(H_3,G)+\frac{R_G}{n^4},
 \label{eq:noninjective-error}
\end{equation}
where $0\le R_G\le n^4-(n)_4=6n^3-11n^2+6n$. Consequently,
\begin{equation}
 0\le \Phi(W_G)-p(H_3,G)
 \le \frac6n-\frac{11}{n^2}+\frac6{n^3}
 =O\left(\frac1n\right)
 \label{eq:bridge-uniform}
\end{equation}
uniformly over $G$. Equations \eqref{eq:noninjective-error} and
\eqref{eq:bridge-uniform} prove \eqref{eq:finite-to-weighted}.

Put $S\coloneqq\sup_{W\text{ finite weighted}}\Phi(W)$.
Since $\Phi(W_G)\le S$, \eqref{eq:finite-to-weighted} gives
\[
 \max_{|V(G)|=n}p(H_3,G)\le S+O(1/n).
\]
Hence \eqref{eq:I-H3} gives $I(H_3)\le S$.

For every graphon $U$, inclusion--exclusion gives
\begin{equation}
 \Phi(U)
 =t(K_2,U)-t(K_2\sqcup K_2,U)-t(P_3,U)+t(P_4,U),
 \label{eq:Phi-inclusion-exclusion}
\end{equation}
where $P_k$ is the path on $k$ vertices. Hence
Lemma~\ref{lem:W-random-convergence}, together with
$t(F,G)=t(F,W_G)$ and \eqref{eq:Phi-inclusion-exclusion}, gives
$\Phi(W_{G(n,U)})\to\Phi(U)$ almost surely.
By \eqref{eq:finite-to-weighted},
\[
 p(H_3,G(n,U))\longrightarrow\Phi(U)
 \qquad\text{almost surely}.
\]
Fixing one realization from this probability-one event and applying
\eqref{eq:I-H3} gives $\Phi(U)\le I(H_3)$. Since every finite weighted graph is
a step graphon,
\[
 I(H_3)\le S
 \le\sup_{U\text{ graphon}}\Phi(U)
 \le I(H_3).
\]
Thus
\[
 I(H_3)=S=\sup_{U\text{ graphon}}\Phi(U).
\]

For each $n$, choose an $n$-vertex graph $G_n$ such that
$p(H_3,G_n)=\max_{|V(G)|=n}p(H_3,G)$.
By Lemma~\ref{lem:dense-graph-compactness}, there are $n_k\to\infty$ and a
graphon $U_*$ such that
$
 t(F,W_{G_{n_k}})\to t(F,U_*)
$
for every finite simple graph $F$. Consequently,
\eqref{eq:Phi-inclusion-exclusion}, \eqref{eq:finite-to-weighted}, and
\eqref{eq:I-H3} yield
\[
 \Phi(U_*)
 =\lim_{k\to\infty}\Phi(W_{G_{n_k}})
 =\lim_{k\to\infty}p(H_3,G_{n_k})
 =I(H_3).
\]
Therefore $I(H_3)=S=\max_{U\text{ graphon}}\Phi(U)$, proving
\eqref{eq:max-weighted}.
\end{proof}

For functions $f,g:V\to\mathbb R$, write
\[
 \ip{f}{g}:=\sum_i\alpha_if_ig_i,
 \qquad \|f\|_2^2:=\ip{f}{f},
 \qquad \|f\|_1:=\sum_i\alpha_i|f_i|.
\]
For $A\subseteq V$, write $|A|:=\sum_{i\in A}\alpha_i$. For a finite
weighted red graph $U$, define
\[
 (Uf)_i:=\sum_j\alpha_jU_{ij}f_j,
 \qquad
 \Ecal_U(f):=\frac12\sum_{i,j}\alpha_i\alpha_j
 U_{ij}(f_i-f_j)^2.
\]
For a graphon $U$, measurable $A\subseteq[0,1]$, and $f,g\in L^2([0,1])$,
the corresponding notation is
\[
 \ip{f}{g}:=\int_0^1f(x)g(x)\dd x,
 \qquad \|f\|_2^2:=\ip{f}{f},
 \qquad \|f\|_1:=\int_0^1|f(x)|\dd x,
\]
\[
 |A|:=\int_A\dd x,
 \qquad (Uf)(x):=\int_0^1U(x,y)f(y)\dd y,
\]
and
\[
 \Ecal_U(f):=\frac12\int_{[0,1]^2}
 U(x,y)(f(x)-f(y))^2\dd x\dd y.
\]
Let
\begin{equation}
 \gfun(t):=t^2(1-t).
 \label{eq:g-def}
\end{equation}

\begin{lemma}\label{lem:dirichlet}
For every finite weighted red graph $W$,
\begin{align}
 \Phi(W)
 &={}\beta(1-\beta)-\sum_i\alpha_i\gfun(d_i)-\Ecal_W(d)
 \label{eq:dirichlet}
 \\
 &={}\beta(1-\beta)-\|d\|_2^2+\ip{d}{Wd}.
 \label{eq:operator-form}
\end{align}
Since $d=W\one$, set $B:=W(\one-d)=d-Wd$. Equivalently,
\begin{equation}
 B_i=\sum_j\alpha_jW_{ij}(1-d_j)
 =d_i-\sum_j\alpha_jW_{ij}d_j.
 \label{eq:B-def}
\end{equation}
With this notation,
\begin{equation}
 \Phi(W)=\beta(1-\beta)-\ip{d}{B}.
 \label{eq:B-form}
\end{equation}
For every graphon $W$, \eqref{eq:dirichlet}--\eqref{eq:B-form} remain valid
after replacing the sum in \eqref{eq:dirichlet} by
$\int_0^1\gfun(d_W(x))\dd x$ and setting
\[
 B:=W(\one-d_W)=d_W-Wd_W,
 \qquad
 B(x)=\int_0^1W(x,y)(1-d_W(y))\dd y.
\]
\end{lemma}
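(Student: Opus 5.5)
The plan is to start from the middle-pair form \eqref{eq:Phi-middle} and rewrite everything in terms of the inner product $\ip{\cdot}{\cdot}$ and the operator $f\mapsto Wf$. The only structural inputs are:
\begin{itemize}
\item the symmetry of $W$, which gives $\ip{f}{Wg}=\ip{Wf}{g}$;
\item the identities $W\one=d$ and $\ip{\one}{d}=\beta$, both immediate from \eqref{eq:weighted-parameters}.
\end{itemize}
I will first prove \eqref{eq:operator-form}, then deduce \eqref{eq:B-form} and \eqref{eq:dirichlet} from it.

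\emph{Step 1: the operator form.} Write $\overline W_{ij}=1-W_{ij}$ in \eqref{eq:Phi-middle} and split the sum. The first part factorizes:
\[
\sum_{i,j}\alpha_i\alpha_j r_id_j=\ip{\one}{r}\,\ip{\one}{d}=(1-\beta)\beta .
\]
The second part is
\[
\sum_{i,j}\alpha_i\alpha_jW_{ij}r_id_j=\ip{r}{Wd}=\ip{\one}{Wd}-\ip{d}{Wd}.
\]
By symmetry, $\ip{\one}{Wd}=\ip{W\one}{d}=\ip{d}{d}=\|d\|_2^2$. This gives \eqref{eq:operator-form}.

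\emph{Step 2: the $B$-form and the Dirichlet form.} Identity \eqref{eq:B-def} is immediate from $W\one=d$. It gives
\[
\ip{d}{B}=\|d\|_2^2-\ip{d}{Wd},
\]
so \eqref{eq:B-form} is a restatement of \eqref{eq:operator-form}. For \eqref{eq:dirichlet}, expand the square in $\Ecal_W(d)$ and use the symmetry of $W$ together with $\sum_j\alpha_jW_{ij}=d_i$:
\[
\Ecal_W(d)=\sum_i\alpha_id_i^3-\ip{d}{Wd}.
\]
Also, by \eqref{eq:g-def},
\[
\sum_i\alpha_i\gfun(d_i)=\sum_i\alpha_id_i^2-\sum_i\alpha_id_i^3 .
\]
Adding these two identities, the cubic terms cancel and we get $\|d\|_2^2-\ip{d}{Wd}$. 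Hence \eqref{eq:dirichlet} agrees with \eqref{eq:operator-form}.

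\emph{Step 3: the graphon case.} The same computation carries over verbatim, with sums replaced by integrals. First, $d_W$ is measurable and $[0,1]$-valued, by Fubini's theorem applied to the bounded measurable function $W$. All integrands that appear ($\overline W$, $W$, $d_W$, $Wd_W$ and their products) are bounded and measurable. Fubini's theorem therefore justifies three things: integrating out $x_0$ and $x_3$ in \eqref{eq:Phi-graphon} to get the graphon analogue of \eqref{eq:Phi-middle}; factorizing the product integrals; and swapping variables in $\ip{\one}{Wd_W}=\ip{W\one}{d_W}$.

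There is no real obstacle here; the identity is purely algebraic. The only points needing care are the cancellation of the cubic terms in Step 2, and checking in the graphon case that each exchange of integration order is covered by the bounded-measurable hypothesis of Fubini's theorem.
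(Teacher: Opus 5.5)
Your proposal is correct and follows essentially the same route as the paper: you start from the middle-pair reduction and expand $\overline W=1-W$ using symmetry and $W\one=d$ to obtain \eqref{eq:operator-form}, you compute $\Ecal_W(d)=\sum_i\alpha_id_i^3-\ip{d}{Wd}$ so the cubic terms cancel against $\sum_i\alpha_i\gfun(d_i)$, and you transfer everything to graphons via Fubini. The only difference is cosmetic: you expand the bilinear form \eqref{eq:Phi-middle} directly using $\ip{\one}{Wd}=\|d\|_2^2$, whereas the paper expands the quadratic form \eqref{eq:Phi-Qform} in $r$ and then substitutes $r=1-d$, which is slightly longer but equivalent.
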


\begin{proof}
By \eqref{eq:weighted-parameters}, $r_i=1-d_i$ and
$\overline W_{ij}=1-W_{ij}$. Since
$\sum_i\alpha_ir_i=1-\beta$ and $\sum_j\alpha_jW_{ij}=d_i$,
\begin{align*}
 \Phi(W)
 &={}\sum_i\alpha_ir_i^2
 -\sum_{i,j}\alpha_i\alpha_j(1-W_{ij})r_ir_j\\
 &={}\sum_i\alpha_ir_i^2-(1-\beta)^2
 +\sum_{i,j}\alpha_i\alpha_jW_{ij}r_ir_j\\
 &={}\sum_i\alpha_id_i^2-\beta^2
 +\sum_{i,j}\alpha_i\alpha_jW_{ij}(1-d_i)(1-d_j)\\
 &={}\beta(1-\beta)-\sum_i\alpha_id_i^2
 +\sum_{i,j}\alpha_i\alpha_jW_{ij}d_id_j.
\end{align*}
The last line is \eqref{eq:operator-form}. Moreover,
\begin{align}
 \Ecal_W(d)
 ={}\frac12\sum_{i,j}\alpha_i\alpha_jW_{ij}(d_i-d_j)^2
 ={}\sum_i\alpha_id_i^2\sum_j\alpha_jW_{ij}
 -\sum_{i,j}\alpha_i\alpha_jW_{ij}d_id_j
 ={}\sum_i\alpha_id_i^3
 -\sum_{i,j}\alpha_i\alpha_jW_{ij}d_id_j.
\label{eq:finite-energy-algebra}
\end{align}
Substituting \eqref{eq:finite-energy-algebra} into
\eqref{eq:operator-form} gives
\eqref{eq:dirichlet}. Moreover,
$(W\one)_i=\sum_j\alpha_jW_{ij}=d_i$, so \eqref{eq:B-def} gives
$B=W(\one-d)=d-Wd$ and
$\ip{d}{B}=\|d\|_2^2-\ip{d}{Wd}$. Therefore \eqref{eq:B-form} follows
from \eqref{eq:operator-form}. For
graphons, put $r=\one-d$. Since $0\le W,d,r\le1$, every integrand below is
bounded and measurable, so the stated form of Fubini's theorem applies.
By symmetry of $W$, for almost every $x,y\in[0,1]$,
\begin{align*}
 \int_0^1\overline W(z,x)\dd z
 =\int_0^1(1-W(x,z))\dd z=1-d(x)=r(x)\quad\text{ and }\quad
 \int_0^1W(y,z)\dd z=d(y).
\end{align*}
Hence \eqref{eq:Phi-graphon} and Fubini's theorem give
\begin{align*}
 \Phi(W)
 ={}\int_{[0,1]^2}
 \left(\int_0^1\overline W(x_0,x)\dd x_0\right)
 \overline W(x,y)
 \left(\int_0^1W(y,x_3)\dd x_3\right)\dd x\dd y
 ={}\int_{[0,1]^2}\overline W(x,y)r(x)d(y)\dd x\dd y.
\end{align*}
Since $d(y)=1-r(y)$, another application of Fubini's theorem gives
\begin{align*}
 \Phi(W)
 &={}\int_{[0,1]^2}\overline W(x,y)r(x)(1-r(y))\dd x\dd y\\
 &={}\int_{[0,1]^2}\overline W(x,y)r(x)\dd x\dd y
 -\int_{[0,1]^2}\overline W(x,y)r(x)r(y)\dd x\dd y\\
 &={}\int_0^1r(x)
 \left(\int_0^1\overline W(x,y)\dd y\right)\dd x
 -\int_{[0,1]^2}\overline W(x,y)r(x)r(y)\dd x\dd y\\
 &={}\int_0^1r(x)^2\dd x
 -\int_{[0,1]^2}\overline W(x,y)r(x)r(y)\dd x\dd y.
\end{align*}
Expanding $\overline W=1-W$ and $r=1-d$, and then changing the order of
integration, yields
\begin{align*}
 \Phi(W)
 &={}\int_0^1r(x)^2\dd x-(1-\beta)^2
 +\int_{[0,1]^2}W(x,y)r(x)r(y)\dd x\dd y\\
 &={}\int_0^1d(x)^2\dd x-\beta^2
 +\int_{[0,1]^2}W(x,y)(1-d(x))(1-d(y))\dd x\dd y\\
 &={}\beta(1-\beta)-\int_0^1d(x)^2\dd x
 +\int_{[0,1]^2}W(x,y)d(x)d(y)\dd x\dd y\\
 &={}\beta(1-\beta)-\|d\|_2^2+\ip{d}{Wd}.
\end{align*}
Moreover, symmetry of $W$ and Fubini's theorem give
\begin{align}
 \Ecal_W(d)
 &={}\frac12\int_{[0,1]^2}W(x,y)(d(x)-d(y))^2\dd x\dd y\\
 &={}\int_0^1d(x)^2\left(\int_0^1W(x,y)\dd y\right)\dd x
 -\int_{[0,1]^2}W(x,y)d(x)d(y)\dd x\dd y\\
 &={}\int_0^1d(x)^3\dd x-\ip{d}{Wd}.
\label{eq:graphon-energy-algebra}
\end{align}
Substituting \eqref{eq:graphon-energy-algebra} into
\eqref{eq:operator-form} proves
\eqref{eq:dirichlet}. Finally,
$(W\one)(x)=\int_0^1W(x,y)\dd y=d(x)$ for almost every $x$, so the graphon
form of \eqref{eq:B-def} gives $B=W(\one-d)=d-Wd$ and
$\ip{d}{B}=\|d\|_2^2-\ip{d}{Wd}$. Together with
\eqref{eq:operator-form}, this proves the graphon form of
\eqref{eq:B-form}.
\end{proof}

For a finite graph $G$, put $d_v:=d_G(v)/n$ and
$\beta_G:=2e(G)/(n(n-1))$.
In the weighted graph $W_G$,
\[
 \alpha_v=\frac1n,
 \qquad
 (W_G)_{uv}=\one_{\{uv\in E(G)\}},
 \qquad
 \widehat\beta_G:=\sum_v\alpha_vd_v=\frac{2e(G)}{n^2}.
\]
Since $\beta_G=2e(G)/(n(n-1))$,
\begin{equation}
 \widehat\beta_G=\left(1-\frac1n\right)\beta_G,
 \qquad
 \widehat\beta_G(1-\widehat\beta_G)
 =\beta_G(1-\beta_G)+O\left(\frac1n\right).
\label{eq:finite-beta-normalization}
\end{equation}
Moreover,
\begin{equation}
 \sum_v\alpha_vd_v^2(1-d_v)
 =\frac1n\sum_{v\in V(G)}d_v^2(1-d_v),
\label{eq:finite-degree-term}
\end{equation}
and
\begin{align}
 \Ecal_{W_G}(d)
 ={}\frac12\sum_{u,v}\alpha_u\alpha_v
 (W_G)_{uv}(d_u-d_v)^2
 ={}\frac1{2n^2}\sum_{u,v}
 \one_{\{uv\in E(G)\}}(d_u-d_v)^2
 ={}\frac1{n^2}\sum_{uv\in E(G)}(d_u-d_v)^2,
\label{eq:finite-energy-term}
\end{align}
since every unordered edge occurs twice in the sum over ordered pairs.
Substituting \eqref{eq:finite-beta-normalization},
\eqref{eq:finite-degree-term}, and \eqref{eq:finite-energy-term} into
\eqref{eq:dirichlet}, and then using
\eqref{eq:finite-to-weighted}, gives
\begin{align}
 p(H_3,G)
 ={}&\beta_G(1-\beta_G)-\frac1n\sum_{v\in V(G)}d_v^2(1-d_v)-\frac1{n^2}\sum_{uv\in E(G)}(d_u-d_v)^2+O\left(\frac1n\right).
 \label{eq:finite-dirichlet}
\end{align}
The last term penalizes edges whose endpoints have different normalized degrees.

\section{Proof of the split-regular bounds}\label{sec:split-regular}

We use the following form of Jensen's inequality; see \cite{Folland}.
\begin{theorem}\label{thm:jensen}
Let $I\subseteq\mathbb R$ be an interval, let $D\subseteq[0,1]$ be measurable
with $|D|>0$, and let $f:D\to I$ be integrable. If $q:I\to\mathbb R$ is
convex and $q\circ f$ is integrable, then
\[
 \int_Dq(f(x))\dd x
 \ge |D|q\left(\frac1{|D|}\int_Df(x)\dd x\right).
\]
For a finite weighted set $(D,\alpha)$, the corresponding form is
\[
 \sum_{i\in D}\alpha_iq(f_i)
 \ge |D|q\left(\frac1{|D|}\sum_{i\in D}\alpha_if_i\right),
 \qquad |D|=\sum_{i\in D}\alpha_i>0.
\]
\end{theorem}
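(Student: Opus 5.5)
We prove Theorem~\ref{thm:jensen} with the standard supporting-line argument, treating the integral and finite weighted forms in parallel; the finite form is in fact the special case of a step function on $[0,1]$ with intervals of lengths $\alpha_i$. Put
\[
 m:=\frac1{|D|}\int_Df(x)\dd x .
\]
The plan is to show that $m\in I$, to find an affine minorant $\ell(t)=q(m)+c(t-m)$ of $q$ on $I$ that touches $q$ at $m$, and then to integrate the inequality $q(f(x))\ge\ell(f(x))$ over $D$.

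\emph{Step 1: $m\in I$.} Since $f(x)\in I$ for every $x\in D$ and $I$ is an interval, integrating the bounds $\inf I\le f\le\sup I$ gives $\inf I\le m\le\sup I$. Suppose $m$ equals an endpoint, say $m=\inf I$. Then $f-m\ge0$ on $D$ and $\int_D(f-m)\dd x=0$, so $f=m$ almost everywhere on $D$. Since $|D|>0$, this forces $m$ to be a value of $f$, hence $m\in I$. In this degenerate case both sides of the inequality equal $|D|q(m)$, and we are done. The case $m=\sup I$ is symmetric. Otherwise $m$ lies in the interior of $I$.

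\emph{Step 2: supporting line.} For $m$ in the interior of $I$, convexity gives the monotonicity of difference quotients. For $s<m<t$ in $I$,
\[
 \frac{q(m)-q(s)}{m-s}\le\frac{q(t)-q(m)}{t-m}.
\]
Let $c$ be the left derivative of $q$ at $m$, namely the supremum of the left-hand quotients. It is finite, because it is bounded above by any single right-hand quotient. The displayed inequality then yields $q(t)\ge q(m)+c(t-m)$ for all $t\in I$, on both sides of $m$.

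\emph{Step 3: integrate.} Substitute $t=f(x)$ and integrate over $D$. The right-hand side is integrable because $f$ is, and $q\circ f$ is integrable by hypothesis. Since $\int_D(f-m)\dd x=0$, we obtain
\[
 \int_Dq(f(x))\dd x\ge|D|q(m)+c\int_D(f(x)-m)\dd x=|D|q(m),
\]
which is the claim. The finite form follows verbatim with $\int_D\cdot\dd x$ replaced by $\sum_{i\in D}\alpha_i(\cdot)$; there Step~1 is immediate, since $m$ is a convex combination of points of $I$.

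The only delicate point is Step~1, namely the possibility that the mean lands on an endpoint of $I$ where no finite supporting slope exists. The almost-everywhere-constant argument above disposes of it.
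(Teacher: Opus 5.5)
Your proof is correct and is the standard supporting-line argument. The paper gives no proof of its own and simply cites Folland, where Jensen's inequality is proved in exactly this way. Your Step~1 usefully supplies the endpoint case that the paper's formulation needs, since the paper applies the theorem on closed, even degenerate, intervals such as $[0,1/2]$ and $[a,1/2]$.

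One small imprecision is in the finite form. There $m\in I$ is immediate, but $m$ can still be an endpoint of $I$, so you must reuse the equality argument from Step~1 and not only the membership claim. Your word ``verbatim'' already covers this.
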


Let $W$ be the disjoint union of a clique $A$ of weight $a$ and a regular
weighted graph on $B$ of weight $1-a$ and absolute degree $x\le1-a$. Then
$d=a$ on $A$, $d=x$ on $B$, and $\Ecal_W(d)=0$. Hence
$\Phi(W)=F(a,x)$, where $F$ is defined in \eqref{eq:F-construction}. We first
record the convex envelope used below. Recall that the lower convex envelope
of a function $f$ on an interval is the pointwise largest convex function
that is everywhere at most $f$.

\begin{lemma}\label{lem:psi-envelope}
The lower convex envelope of $\gfun(t)=t^2(1-t)$ on $[0,1/2]$ is
\begin{equation}
 \gcvx(t)=
 \begin{cases}
  t^2(1-t),&0\le t\le1/4,\\[1mm]
  \dfrac{5t}{16}-\dfrac1{32},&1/4\le t\le1/2.
 \end{cases}
 \label{eq:g-envelope}
\end{equation}
\end{lemma}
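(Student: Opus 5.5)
The plan is to verify directly that the piecewise function $\gcvx$ in \eqref{eq:g-envelope} is convex, lies below $\gfun$ on $[0,1/2]$, and is the largest such function. We begin with the shape of $\gfun$. Since $\gfun''(t)=2-6t$, the function $\gfun$ is convex on $[0,1/3]$ and concave on $[1/3,1/2]$. The envelope should therefore agree with $\gfun$ on an initial segment $[0,t_0]$ and then be the chord from $(t_0,\gfun(t_0))$ to the right endpoint $(1/2,\gfun(1/2))=(1/2,1/8)$. Here $t_0$ is chosen so that this chord is tangent to $\gfun$ at $t_0$.

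First we find the tangency point. The tangency condition is $\gfun(1/2)-\gfun(t_0)=\gfun'(t_0)(1/2-t_0)$. Dividing out the factor $(1/2-t_0)$ reduces this to a quadratic in $t_0$, whose admissible root should be $t_0=1/4$. We check this directly: $\gfun(1/4)=3/64$ and $\gfun'(1/4)=2t-3t^2\big|_{1/4}=5/16$. The tangent line at $1/4$ is $3/64+\frac{5}{16}(t-1/4)=\frac{5t}{16}-\frac1{32}$, and at $t=1/2$ it takes the value $5/32-1/32=1/8=\gfun(1/2)$. So the line in \eqref{eq:g-envelope} is simultaneously the tangent at $1/4$ and the chord to $1/2$. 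In particular, $\gcvx$ is continuous and $C^1$ at $t=1/4$.

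Next we verify convexity and the lower bound. The first piece is convex on $[0,1/4]\subseteq[0,1/3]$, and the second piece is linear. Because the slopes match at $1/4$, the derivative of $\gcvx$ is nondecreasing, so $\gcvx$ is convex. On $[1/4,1/2]$ we need $\gfun(t)-\frac{5t}{16}+\frac1{32}\ge0$. This cubic has a double root at $t=1/4$, coming from tangency, and a root at $t=1/2$. Matching the leading coefficient $-1$ gives the factorization
\[
 \gfun(t)-\tfrac{5t}{16}+\tfrac1{32}=-(t-\tfrac14)^2(t-\tfrac12)=(t-\tfrac14)^2(\tfrac12-t),
\]
which is nonnegative on $[1/4,1/2]$. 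Hence $\gcvx\le\gfun$ on $[0,1/2]$.

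Finally we show maximality. Let $c$ be any convex function with $c\le\gfun$ on $[0,1/2]$. On $[0,1/4]$ we have $c\le\gfun=\gcvx$ directly. For $t\in[1/4,1/2]$, write $t$ as a convex combination of $1/4$ and $1/2$. Convexity of $c$ gives $c(t)\le$ the chord of $c$ between these points. That chord lies below the chord of $\gfun$ between them, which is exactly $\gcvx(t)$ because $\gcvx$ is linear there and agrees with $\gfun$ at both endpoints. Thus $c\le\gcvx$, and $\gcvx$ is the envelope. No step is a real obstacle; the only care needed is the factorization showing the line stays below $\gfun$ on $[1/4,1/2]$.
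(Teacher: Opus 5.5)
Your proposal is correct and follows the paper's proof essentially step for step. Both use the tangent line $5t/16-1/32$ at $t=1/4$, which is also the chord to $t=1/2$, the same factorization $\gfun(t)-\bigl(\tfrac{5t}{16}-\tfrac1{32}\bigr)=(t-\tfrac14)^2(\tfrac12-t)$ (written in the paper as $(1-2t)(4t-1)^2/32$), and the same chord argument showing that every convex minorant lies below $\gcvx$.
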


\begin{proof}
Since $\gfun''(t)=2-6t>0$ on $[0,1/4]$, the first piece is convex. The
line through $(1/4,\gfun(1/4))$ and $(1/2,\gfun(1/2))$ is
$\ell(t)=5t/16-1/32$, and
\begin{equation}
 \gfun(t)-\ell(t)
 =\frac{(1-2t)(4t-1)^2}{32}.
 \label{eq:g-envelope-cert}
\end{equation}
The right-hand side is nonnegative on $[1/4,1/2]$, and
$\ell'=5/16=\gfun'(1/4)$; hence $\gcvx$ is a convex minorant of $\gfun$.
If $q\le\gfun$ is convex, then $q\le\gfun=\gcvx$ on $[0,1/4]$, while on
$[1/4,1/2]$ convexity places $q$ below the chord through its endpoint values,
which is at most $\ell=\gcvx$. Thus every convex minorant is at most $\gcvx$.
\end{proof}

Consequently, if $D$ has positive weight and $f:D\to[0,1/2]$, then
\begin{equation}
 \int_D\gfun(f)\ge\int_D\gcvx(f)
 \ge |D|\gcvx\left(\frac1{|D|}\int_D f(x)\dd x\right),
 \label{eq:envelope-jensen}
\end{equation}
where the second inequality follows from Theorem~\ref{thm:jensen}.

\begin{lemma}\label{lem:regular-max}
For $1/2\le a\le1$ and $0\le x\le1-a$, we have
\[
 \Freg(a,x)\le\pstar.
\]
The unique interior global maximizer with $x<1/4$ is $(a_*,x_*)$.
\end{lemma}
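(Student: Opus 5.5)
This is a two-variable calculus problem on the compact region $\Dcal=\{(a,x):1/2\le a\le1,\ 0\le x\le1-a\}$. The plan is to locate all critical points of $\Freg$ in the interior and compare them with the boundary values. Expanding \eqref{eq:F-construction} and writing $\beta=a^2+(1-a)x$ gives
\[
 \Freg(a,x)=\beta(1-\beta)-a^3(1-a)-(1-a)x^2(1-x),
\]
which is a polynomial of degree $4$. The boundary consists of three pieces, which I would treat one by one.

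On $x=0$ we have $\Freg=a^2(1-a^2)-a^3(1-a)=a^2(1-a)$, which is at most $4/27$, with the maximum at $a=2/3$. On $a=1$ we have $\Freg=0$. On $x=1-a$ we have $\beta=a^2+(1-a)^2$, and $\Freg$ becomes a one-variable quartic in $a\in[1/2,1]$, which I would bound directly. The claim is then that each boundary maximum is strictly below $\pstar$. Since $\pstar=\Freg(a_*,x_*)$ is an explicit algebraic number, I would check this numerically with rigorous rational bounds: bracket $x_*$ in a short rational interval from a sign change of \eqref{eq:cubic-d}, and propagate the bracket to $a_*$ and $\pstar$. A sanity estimate is $x_*\approx 0.1$; the cubic has roots $1/8$ and others, so I would locate the one in $(0,1/4)$. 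Concretely, $32x^3-40x^2+13x-1$ at $x=1/8$ equals $1/16-5/8+13/8-1=1/16\ne0$, so it must be bracketed numerically.

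In the interior I would solve $\partial_x\Freg=0$ and $\partial_a\Freg=0$. The first equation is
\[
 \partial_x\Freg=(1-a)\bigl[(1-2\beta)-(2x-3x^2)\bigr]=0,
\]
that is, $1-2a^2-2(1-a)x-2x+3x^2=0$. This equation is quadratic in $a$ and can be used to express it. The second equation,
\[
 \partial_a\Freg=(2a-x)(1-2\beta)-(3a^2-4a^3)+x^2(1-x)=0,
\]
is then combined with the first. I would take the resultant, or equivalently subtract suitable multiples to eliminate $a^2$. This should show that $a$ is a rational function of $x$, namely $a=(4x^2-7x+2)/(3-8x)$, exactly the formula in \eqref{eq:star-parameters}. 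Substituting back should give $(3-8x)^{-2}$ times a polynomial in $x$ that factors as the cubic \eqref{eq:cubic-d} times a harmless factor. Among the roots, I would then show that only $x_*$ yields a point with $x<1/4$ and $(a,x)$ in the interior of $\Dcal$. The other roots either give $a\notin[1/2,1]$ or $x>1-a$, or have $x\ge1/4$ and a value of $\Freg$ below $\pstar$. The uniqueness assertion refers only to interior critical points with $x<1/4$, so for $x\ge1/4$ it suffices to check the function value.

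Since $\Dcal$ is compact, the global maximum is attained either at an interior critical point or on the boundary. Comparing the finitely many candidate values then gives $\Freg\le\pstar$, and shows that $(a_*,x_*)$ is the unique global maximizer with $x<1/4$. Any critical point with $x\ge1/4$ must also be compared; if convenient I would rule this region out by bounding $\Freg$ there. One way is to use $x^2(1-x)\ge\gcvx(x)$ from Lemma~\ref{lem:psi-envelope}, which reduces $\Freg$ to a concave-in-$x$ quadratic bound that can be maximized by hand.

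The main obstacle is the elimination step: verifying that the critical-point system reduces precisely to \eqref{eq:cubic-d} with the stated $a_*$, and controlling the extraneous roots. I would first try writing $a=a(x)$ from the combination of the two equations that is linear in $a$, and then perform the polynomial division explicitly.
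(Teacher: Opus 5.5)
Your interior analysis matches the paper's. The combination $C_2-2C_1$ of the two critical equations is linear in $a$ and gives $a=(4x^2-7x+2)/(3-8x)$. Back-substitution gives $-(3x-1)(32x^3-40x^2+13x-1)$. Passing from $\partial_aF=0$ to the second equation uses $a\ne x$, which holds in the interior since $x<1-a\le1/2\le a$. The extraneous roots $x=1/3$, $x\approx0.357$, $x\approx0.781$ give $a=1/3$, $a\approx0.08$, $a\approx0.32$, all below $1/2$. So discarding them directly works; the paper instead cuts off $x\ge1/4$ beforehand with the envelope bound $F\le\widetilde F\le19/128$.

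\textbf{Gap: you miss one side of the boundary.} The feasible set $\{1/2\le a\le1,\ 0\le x\le1-a\}$ is the triangle with vertices $(1/2,0)$, $(1,0)$, $(1/2,1/2)$. Its sides are $x=0$, $x=1-a$ and $a=1/2$. Your piece ``$a=1$'' is only the vertex $(1,0)$, and the side $a=1/2$, $0\le x\le1/2$, is never examined. As written, the compactness argument (maximum at an interior critical point or on the boundary) is therefore incomplete.

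This side is not harmless. There $F(1/2,x)=\tfrac18+\tfrac x4-\tfrac34x^2+\tfrac12x^3$, whose maximum is $\tfrac18+\tfrac{\sqrt3}{72}\approx0.14906$ at $x=(3-\sqrt3)/6$. That is only about $10^{-3}$ below $p_*\approx0.15008$. It is the tightest of all the boundary comparisons, so it also dictates how accurately you must bracket $p_*$. Adding this one-variable check, which is one of the paper's boundary cases, closes the gap.

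Apart from this, the routes differ only in how the comparison is certified. The paper never evaluates $p_*$. It shows that every boundary maximum ($4/27$, $1/8+\sqrt3/72$, $19/128$, $15/128$) lies below $3/20$. It then exhibits the rational point $F(3/5,9/80)=3/20+21/256000$, which forces the maximum into the interior and hence onto $(a_*,x_*)$. Your rigorous bracketing of $x_*$, $a_*$, $p_*$ is also valid, but it needs more interval arithmetic on algebraic numbers.
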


\begin{proof}
Put $\beta=a^2+(1-a)x$. Rewriting \eqref{eq:F-construction} using
\eqref{eq:g-def} gives
\begin{equation}
 F(a,x)=\beta(1-\beta)-a\gfun(a)-(1-a)\gfun(x).
 \label{eq:F-dirichlet}
\end{equation}
Since $\gcvx\le\gfun$ by Lemma~\ref{lem:psi-envelope} and $1-a\ge0$, define
\begin{equation}
 \widetilde F(a,x)
 :=\beta(1-\beta)-a\gfun(a)-(1-a)\gcvx(x),
 \qquad F(a,x)\le\widetilde F(a,x).
 \label{eq:F-tilde}
\end{equation}
If $x\ge1/4$, then \eqref{eq:g-envelope} gives
$\gcvx(x)=5x/16-1/32$. Hence direct differentiation of
\eqref{eq:F-tilde} gives
\begin{equation}
 \frac{\partial\widetilde F}{\partial x}
 =\frac{1-a}{16}\bigl(11-32a^2-32(1-a)x\bigr)\le0,
 \label{eq:F-tilde-derivative}
\end{equation}
since for $x\ge1/4$ the bracket is at most
$3+8a-32a^2\le-1$. Thus \eqref{eq:F-tilde-derivative} gives
$\partial\widetilde F/\partial x<0$ whenever $a<1$. Moreover,
\[
 \frac{19}{128}-\widetilde F\left(a,\frac14\right)
 =\frac{(2a-1)(32a^2-12a-1)}{128}\ge0.
\]
Consequently, for $x\ge1/4$,
\[
 F(a,x)\le\widetilde F(a,x)
 \le\widetilde F\left(a,\frac14\right)
 \le\frac{19}{128}<\frac3{20}.
\]
Thus an optimum larger than $3/20$ must have $x<1/4$.

Differentiating \eqref{eq:F-dirichlet} gives
\begin{align*}
 \frac{\partial\Freg}{\partial x}
 &=(1-a)\bigl(1-2\beta-\gfun'(x)\bigr)=(a-1)(2a^2-2ax+4x-3x^2-1),\\
 \frac{\partial\Freg}{\partial a}
 ={}&(2a-x)(1-2\beta)-\gfun(a)-a\gfun'(a)+\gfun(x)\\
 ={}&(-2a+x)(2a^2-2ax+4x-3x^2-1)+(a-x)(4a^2+4ax-3a-2x^2+x).
\end{align*}
In the interior, $a<1$ and $0<x<1-a<a$. Therefore
the derivative equations give the following system, whose two left-hand sides
we denote by $C_1$ and $C_2$, respectively:
\begin{equation}
 2a^2-2ax+4x-3x^2-1=0,
 \qquad 4a^2+4ax-3a-2x^2+x=0.
 \label{eq:critical-system}
\end{equation}
Subtracting twice the first equation in \eqref{eq:critical-system} from the
second gives
\begin{equation}
 (8x-3)a+4x^2-7x+2=0.
\label{eq:critical-linear}
\end{equation}
Since $x<1/4$, the denominator $3-8x$ is nonzero, and therefore
\begin{equation}
 a=\frac{4x^2-7x+2}{3-8x}.
 \label{eq:a-of-x}
\end{equation}
Substituting $a=(4x^2-7x+2)/(3-8x)$ from \eqref{eq:a-of-x} into the first
equation in \eqref{eq:critical-system} and multiplying by $(3-8x)^2$ gives
\begin{align}\label{eq:resultant}
 0={}&(3-8x)^2(2a^2-2ax+4x-3x^2-1)=-(3x-1)(32x^3-40x^2+13x-1).
\end{align}
Write $C(x)=32x^3-40x^2+13x-1$. Its two critical points are
$(10\pm\sqrt{22})/24$. The first lies in $(1/5,1/4)$ and the second is larger
than $1/4$. Since
$
 C(0)=-1, C(1/5)=\frac{32}{125}>0$
 and $C(1/4)=\frac14>0,
$
the polynomial increases up to its first critical point and then decreases
but remains positive through $x=1/4$. Hence it has exactly one root in
$(0,1/4)$, namely $x_*$. Since $3x_*-1\ne0$,
\eqref{eq:resultant} gives $C_1=0$ at $x=x_*$ and $a=a_*$. Equation
\eqref{eq:critical-linear}, equivalently $C_2-2C_1=0$, then gives $C_2=0$,
so $(a_*,x_*)$ is a critical point. It is
feasible and interior since, for $0<x<1/4$,
\[
 a-\frac12=\frac{(1-4x)(1-2x)}{2(3-8x)}>0,
 \qquad
 1-a-x=\frac{(1-2x)^2}{3-8x}>0.
\]

It remains to inspect the boundary of the region $x<1/4$. At $x=0$,
\[
 \Freg(a,0)=a^2(1-a)\le\frac4{27}.
\]
At $a=1/2$ we have
\begin{equation}
 16\frac{\partial\Freg}{\partial x}
 =24x^2-24x+4,
\label{eq:a-half-derivative}
\end{equation}
so \eqref{eq:a-half-derivative} shows that the maximum on
$0\le x\le1/4$ occurs at
$x=(3-\sqrt3)/6$ and equals $1/8+\sqrt3/72$. At $x=1/4$,
\eqref{eq:g-envelope-cert} is an equality, and
\[
 \frac{19}{128}-\Freg\left(a,\frac14\right)
 =\frac{(2a-1)(32a^2-12a-1)}{128}\ge0.
\]
Finally, on $x=1-a\le1/4$,
\[
 \Freg(a,1-a)=a(1-a)(2a^2-2a+1),
 \qquad
 \frac{\mathrm d}{\mathrm d a}\Freg(a,1-a)=-(2a-1)^3,
\]
so the maximum for $3/4\le a\le1$ is $15/128$. At $a=1$ the value is $0$.
All four boundary bounds are strictly below $3/20$. On the other hand,
\begin{equation}
 \Freg\left(\frac35,\frac9{80}\right)
 =\frac{38421}{256000}
 =\frac3{20}+\frac{21}{256000}>\frac3{20}.
 \label{eq:rational-benchmark}
\end{equation}
Therefore the unique feasible interior point $(a_*,x_*)$ is the global maximizer.
\end{proof}

We also need the following extension to $x\in[0,1/2]$. Define
\begin{equation}
 F_0(M,x):=\beta_0(1-\beta_0)-M\gfun(M)-(1-M)\gcvx(x),
 \qquad \beta_0=M^2+(1-M)x.
 \label{eq:F0}
\end{equation}

\begin{lemma}\label{lem:F0-extended}
For $\frac12\le M\le1$ and $0\le x\le\frac12$, we have
\begin{equation}
 F_0(M,x)\le\pstar.
 \label{eq:F0-bound}
\end{equation}
More precisely, $F_0(M,x)\le19/128$ when $x\ge1/4$, and
$F_0(M,x)\le15/128$ when $x\le1/4$ but $x>1-M$.
\end{lemma}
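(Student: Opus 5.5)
Split the range $0\le x\le\frac12$ into the three cases suggested by the statement: $x\ge1/4$; $x\le1/4$ with $x\le1-M$; and $x\le1/4$ with $x>1-M$. In every case the bound will be compared with the benchmark \eqref{eq:rational-benchmark}, which gives $\pstar>3/20>19/128>15/128$.

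\emph{Case $x\ge1/4$.} Here $\gcvx(x)=5x/16-1/32$, so $F_0(M,x)$ coincides with $\widetilde F(M,x)$ from \eqref{eq:F-tilde}. I will reuse the derivative computation \eqref{eq:F-tilde-derivative}. The bracket $11-32M^2-32(1-M)x$ is at most $3+8M-32M^2\le-1$ for $M\ge1/2$ and $x\ge1/4$. This computation did not use the constraint $x\le1-M$, so $F_0$ is nonincreasing in $x$ on $[1/4,1/2]$. Hence $F_0(M,x)\le F_0(M,1/4)=\widetilde F(M,1/4)\le19/128$ by the identity $\frac{19}{128}-\widetilde F(M,\frac14)=\frac{(2M-1)(32M^2-12M-1)}{128}$. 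The factor $32M^2-12M-1$ is nonnegative for $M\ge1/2$, since at $M=1/2$ it equals $1$ and it is increasing there.

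\emph{Case $x\le1/4$ and $x\le1-M$.} Here $\gcvx(x)=\gfun(x)$, so $F_0=F$. Lemma~\ref{lem:regular-max} then gives $F_0\le\pstar$ directly.

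\emph{Case $x\le1/4$ and $x>1-M$.} This forces $M>3/4$, and again $F_0=F$. My plan is to show that $F(M,\cdot)$ is nonincreasing on $[1-M,1/4]$. The boundary value $F(M,1-M)=M(1-M)(2M^2-2M+1)\le15/128$ is already established in the proof of Lemma~\ref{lem:regular-max}, and monotonicity would then give the claimed $15/128$ bound.

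By the formula in that proof, $\partial F/\partial x=(M-1)\,C_1(M,x)$ with $C_1=2M^2-2Mx+4x-3x^2-1$. Since $M-1\le0$, it suffices to show $C_1\ge0$ on the region $M\ge3/4$, $0\le x\le1/4$. As a function of $M$, $\partial C_1/\partial M=4M-2x>0$, so $C_1$ is increasing in $M$. At $M=3/4$ one gets $C_1=\tfrac18+\tfrac52x-3x^2$, which is positive on $[0,1/4]$. Therefore $C_1>0$ throughout, $F$ decreases in $x$ there, and $F_0(M,x)\le F(M,1-M)\le15/128$.

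The main obstacle is this last sign check. It requires confirming that the region $x>1-M$ really forces $M>3/4$, and that $C_1$ stays positive on the whole region. If the monotonicity failed, the fallback would be to bound $F$ directly by a polynomial inequality in two variables.
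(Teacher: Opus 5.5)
Your proof is correct and follows the paper's argument essentially step for step. You use the same three-way case split, the same reuse of the $\widetilde F$ derivative and the $19/128$ identity for $x\ge1/4$, Lemma~\ref{lem:regular-max} on the feasible region, and monotonicity in $x$ down to the boundary value $M(1-M)(2M^2-2M+1)\le15/128$ when $1-M<x\le1/4$. The only difference is cosmetic and lies in that last sign check: you show $C_1>0$ via monotonicity in $M$ and evaluation at $M=3/4$, whereas the paper uses convexity of $-C_1$ in $x$ and its endpoint values $-M^2$ and $-(32M^2-8M-3)/16$; both arguments are valid.
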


\begin{proof}
If $x\le1/4$ and $x\le1-M$, then \eqref{eq:F0} gives
$F_0(M,x)=\Freg(M,x)$ and
Lemma~\ref{lem:regular-max} applies. If $x\le1/4$ and $x>1-M$,
then $M>3/4$ and
\begin{equation}
 \frac{\partial F_0}{\partial x}
 =(1-M)(-2M^2+2Mx+3x^2-4x+1)\le0.
\label{eq:F0-low-derivative}
\end{equation}
The bracket is convex in $x$ and at $x=1-M,1/4$ equals, respectively,
\begin{equation}
 -M^2,
 \qquad -\frac{32M^2-8M-3}{16}.
\label{eq:F0-endpoint-signs}
\end{equation}
Both quantities in \eqref{eq:F0-endpoint-signs} are negative for $M>3/4$.
Thus equality in \eqref{eq:F0-low-derivative} can occur only in the
degenerate case $M=1$, and
\[
 F_0(M,x)\le F_0(M,1-M)
 =M(1-M)(2M^2-2M+1)\le\frac{15}{128}.
\]
Here the last one-variable polynomial has derivative $-(2M-1)^3$ and the
relevant range is $M>3/4$. Thus its maximum on the closure is its value at
$M=3/4$.
Finally, if $x\ge1/4$, then
\begin{equation}
 \frac{\partial F_0}{\partial x}
 =\frac{1-M}{16}\bigl(11-32M^2-32(1-M)x\bigr)\le0,
\label{eq:F0-high-derivative}
\end{equation}
since the bracket is at most $3+8M-32M^2\le-1$. Again equality can occur only
at $M=1$. Thus \eqref{eq:F0-high-derivative} shows that the maximum is
attained at
$x=1/4$ (or is constant in the degenerate case), and
\[
 \frac{19}{128}-F_0\left(M,\frac14\right)
 =\frac{(2M-1)(32M^2-12M-1)}{128}\ge0.
\]
Indeed, both factors are nonnegative for $M\ge1/2$; the second is increasing
there and equals $1$ at $M=1/2$.
\end{proof}

\section{Proof of the structural reduction}\label{sec:structural-reduction}

\subsection{Degree quotients}\label{subsec:chosen-extremal}

By Lemma~\ref{lem:sequence-bridge}, it suffices to maximize $\Phi$ over
graphons. More precisely, we work in the compact graphon space modulo
measure-preserving weak isomorphism, endowed with the cut metric. The
functional $\Phi$ is a finite linear combination of homomorphism densities
and is therefore cut-continuous. Hence its set of maximizers is a closed,
and thus compact, subset of this quotient space. The degree-square functional
\begin{equation}
 \Dcal(W):=\|d_W\|_2^2
 =\begin{cases}
   \sum_i\alpha_id_i^2,&W\text{ finite weighted},\\
   \int_0^1d_W(x)^2\dd x,&W\text{ a graphon}.
  \end{cases}
 \label{eq:normsq}
\end{equation}
is the homomorphism density of a two-edge path and is therefore
cut-continuous and invariant under weak isomorphism. It consequently attains
its minimum on the compact set of maximizers of $\Phi$. Choose a maximizer
$W$ attaining this minimum.
We call $W$ a \emph{chosen extremal graphon}.

\begin{theorem}\label{thm:structural-reduction}
There exists a chosen extremal graphon admitting a measurable partition
$[0,1]=A\sqcup B$, with $a:=|A|>1/2$, such that, modulo null sets,
\begin{equation}
 W=K_A\sqcup W_B,
 \qquad d_W=a\ \text{on }A,
 \qquad d_W\le\frac12\ \text{on }B.
 \label{eq:structural-reduction}
\end{equation}
Equivalently, $W=1$ on $A^2$, $W=0$ on $A\times B$, and every point of
$B$ has degree at most $1/2$.
\end{theorem}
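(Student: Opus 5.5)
We start from a chosen extremal graphon $W$, a maximizer of $\Phi$ that minimizes $\Dcal$ among maximizers, and derive first-order (variational) conditions. The proof then reads off the structure from these conditions. The basic tool is the form \eqref{eq:Phi-middle} and its graphon analogue $\Phi(W)=\int\overline W(x,y)r(x)d(y)$, viewed as a function of the pair values $W(x,y)$.

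\textbf{Step 1: the pair derivative.} Perturb $W$ on a small symmetric set $S\times T\cup T\times S$ by adding $\varepsilon$. The first variation of $\Phi$ at a pair $(x,y)$ is, up to a factor, the symmetrized expression
\[
 D(x,y)=2(1-2\beta)-\bigl(r(x)+r(y)\bigr)\cdot(\text{terms}) \dots
\]
It is cleanest to compute from \eqref{eq:operator-form}. Changing $W(x,y)$ changes $d(x)$ and $d(y)$, so
\[
 \partial\Phi = 2(1-2\beta) - 2(d(x)+d(y)) + 2\bigl((Wd)(x)+(Wd)(y)\bigr) + 2d(x)d(y).
\]
Writing $B=d-Wd$, this is $2\bigl[1-2\beta-B(x)-B(y)+d(x)d(y)\bigr]$ up to normalisation. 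Optimality then gives $W=1$ wherever $g(x,y):=1-2\beta-B(x)-B(y)+d(x)d(y)>0$, and $W=0$ wherever $g<0$, almost everywhere. On the zero set $\{g=0\}$ we use $\Dcal$-minimality as a tie-breaker: a perturbation there changes $\Phi$ only to second order, and the second-order term must be handled together with $\Dcal$. I would also prove a vertex-level condition by moving mass between vertices (symmetrization/cloning). This shows that the vertex function $\Lambda(x)=\int\partial_{\text{vertex}}$ is constant a.e.; equivalently, the weighted quotient by degree classes is well defined.

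\textbf{Step 2: degree quotient and two classes.} Using the bilinear-in-$(d(x),d(y))$ structure of $g$ together with the vertex condition, I would show that $B$ is an affine function of $d$ on the support, so $g(x,y)$ depends only on $(d(x),d(y))$ and is of the form $c_0-c_1(d(x)+d(y))+d(x)d(y)$, i.e. $(d(x)-c_1)(d(y)-c_1)+\text{const}$. The sign pattern of such a function is "threshold" like. Vertices with $d>c_1$ are mutually joined when the constant is large enough, and are separated from the low-degree vertices. I would set $A=\{d>\tfrac12\}$ and $B=\{d\le\tfrac12\}$. The aim is to show that pairs in $A\times A$ have $g>0$ and pairs in $A\times B$ have $g<0$, which forces $W=K_A\sqcup W_B$. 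Once $W$ is $1$ on $A^2$ and $0$ on $A\times B$, every $x\in A$ has $d(x)=a$. Since $A$ is nonempty, $a>\tfrac12$. The degenerate case $A=\emptyset$ must be excluded by comparison: if every degree is at most $\tfrac12$, then \eqref{eq:dirichlet} and \eqref{eq:envelope-jensen} bound $\Phi$ by $\beta(1-\beta)-\gcvx(\beta)$. One checks that this is below the benchmark value $>3/20$ from \eqref{eq:rational-benchmark}.

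\textbf{Main obstacle.} The hard step is the tie region $g=0$, together with pairs having one or both endpoints at degree exactly near the threshold. There, first-order conditions say nothing, and fractional values of $W$ could a priori mix $A$ and $B$. Here I would use the degree-square tie-breaking. On a null-first-variation perturbation, moving red weight from a pair with close degrees towards the clique either increases $\Phi$ at second order, which contradicts maximality, or keeps $\Phi$ fixed while strictly decreasing $\Dcal$, which contradicts the choice of $W$. The second-order term of $\Phi$ along such a perturbation is $-\|\delta d\|_2^2+\langle\delta d,W\delta d\rangle+2\langle \delta d,\delta W d\rangle$. Controlling its sign (via $\Ecal$-type identities from Lemma~\ref{lem:dirichlet}) is where I expect most of the work.
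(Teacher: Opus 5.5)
Your Step~1 is sound. The first variation you compute is the paper's kernel $L(x,y)=1-2\beta+d(x)d(y)-B(x)-B(y)$. Your treatment of the case $\{d>1/2\}=\varnothing$ via $\beta(1-\beta)-\gcvx(\beta)\le153/1024<3/20$ is exactly the paper's.

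The gap is Step~2, which states the conclusion rather than deriving it. Three problems arise there.
\begin{enumerate}
\item Nothing in your sketch justifies that $B$ is affine in $d$. The rooted-density vertex condition involves $W^2d$ and $WB$, not a relation between $B$ and $d$. The paper only obtains the one-sided half-slope inequality $B(u)-B(r)\le(u-r)/2$ of Lemma~\ref{lem:half-slope}.
\item Even if $g=(d(x)-\kappa)(d(y)-\kappa)+C$, the centre $\kappa$ need not be $1/2$; at the extremal configuration it equals $1-a_*-x_*<1/2$. So no sign pattern relative to $\{d>1/2\}$ follows.
\item More fundamentally, once $A=\{d>1/2\}$ is a clique, $W=0$ on $A\times B$ is equivalent to every point of $A$ having degree exactly $|A|$, and local conditions do not give this. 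Even with the half-slope inequality, which makes $u\mapsto L_z(u)$ increasing for $z>1/2$, they only show that a high-degree row is an upper set of measure $z$ in the degree order (Lemma~\ref{lem:upper-set}). They do not locate its threshold. A row of measure $z>|A|$, which is forced whenever $|A|\le1/2$, must reach into $\{d\le1/2\}$.
\end{enumerate}

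The missing idea is that the paper excludes these configurations by global value comparisons, not by variational conditions. These are the configurations in which a high-degree clique atom $A$, of weight $a$ and degree $M$, has a common neighbourhood $N_A\supsetneq A$. The local work (fractional exclusion, half-slope, the median core, and $\Dcal$-rigidity of $\Phi$-neutral merges in Theorem~\ref{thm:atomization}) reduces to this situation with $S=N_A\setminus A$. The paper then bounds $\Phi$ explicitly in two cases.
\begin{enumerate}
\item When $a\le1/2$, it proves $\Phi\le U(a,M,u,x)\le3/20$ (Lemma~\ref{lem:small-atom}, including a Bernstein-sign certificate).
\item When $a\ge1/2$, it proves $\Phi\le F_0(M,x)-(M-a)^2z^2-(M-a)\Hcal z$. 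If $\Hcal<0$ this gives $\Phi\le3/20$. If $\Hcal\ge0$ it gives $\Phi\le F_0\le\pstar$, so equality with the construction forces $z=0$ and hence $S=\varnothing$.
\end{enumerate}
These bounds are played against the benchmark \eqref{eq:rational-benchmark} and against the exact value $\pstar$, so the structure is extracted from the optimal value itself. Your proposal uses a global comparison only when $\{d>1/2\}$ is empty, so it has no mechanism for these cases.

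Two further points of mismatch. The paper uses the $\Dcal$ tie-breaker only through quotients, via $\Dcal(W^P)=\Dcal(W)-\|d-Pd\|_2^2$ (Lemma~\ref{lem:quotient-rigid}). It excludes fractional pairs with a high-degree endpoint by a second-order perturbation with $\mathcal Q_W(K)>0$, using maximality alone. The second-order ``moves towards the clique'' you propose are not what resolves the tie region.
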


Let $\mathcal A=\sigma(d)$ and replace $W$ by the degree quotient
\begin{equation}
 \widehat W:=\mathbb E[W\mid\mathcal A\otimes\mathcal A].
 \label{eq:degree-quotient}
\end{equation}
For a finite weighted graph, this operation averages each block between two
degree classes. Integrating $\widehat W$ in either variable gives
$\mathbb E[d\mid\mathcal A]=d$, and testing the conditional expectation
against the $\mathcal A\otimes\mathcal A$-measurable function
$(x,y)\mapsto d(x)d(y)$ shows
$\ip{d}{\widehat Wd}=\ip{d}{Wd}$. Thus \eqref{eq:operator-form} implies that
the quotient \eqref{eq:degree-quotient} preserves $d$, $\Phi$, and
$\Dcal(W)$. In particular,
$\widehat W$ is again a chosen extremal graphon, and we replace $W$ by it.
Henceforth, almost everywhere,
\begin{equation}
 W(x,y)=w(d(x),d(y)),
 \qquad B(x)=B(d(x)).
 \label{eq:degree-measurable}
\end{equation}
Put $\mathbf d(x,y):=(d(x),d(y))$. Before completion,
$\mathcal A\otimes\mathcal A=\sigma(\mathbf d)$. A bounded function
measurable with respect to the completion of this $\sigma$-algebra has an
uncompleted measurable version, and the Doob--Dynkin factorization gives a
Borel function of $\mathbf d$. Applying the same argument to $\sigma(d)$,
we obtain Borel functions $w:[0,1]^2\to[0,1]$ and
$b:[0,1]\to\mathbb R$ such that
$W(x,y)=w(d(x),d(y))$ and $B(x)=b(d(x))$ almost everywhere.
Symmetrizing $w$ preserves this identity. Below we write $B(t)$ for $b(t)$.
Let $\mathcal P$ be a partition of $V$. If $i$ lies in a block
$C\in\mathcal P$, define
\[
 (Pf)_i:=\frac1{|C|}\sum_{u\in C}\alpha_uf_u.
\]
For $i\in C$ and $j\in D$, replace $W_{ij}$ by the weighted edge density
between $C$ and $D$,
\[
 W^P_{ij}:=\frac1{|C||D|}
 \sum_{u\in C,\,v\in D}\alpha_u\alpha_vW_{uv}.
\]
Set $z=d-Pd$. Then $d_{W^P}=Pd$ and $\sum_i\alpha_i(Pd)_i=\beta$.
For graphons, let $\mathcal G$ be any sub-$\sigma$-algebra of the Lebesgue
$\sigma$-algebra, and define
\begin{equation}
 Pf:=\mathbb E[f\mid\mathcal G],
 \qquad
 W^P:=\mathbb E[W\mid\mathcal G\otimes\mathcal G].
 \label{eq:general-quotient}
\end{equation}
The transpose map preserves $\mathcal G\otimes\mathcal G$, so the conditional
expectation may be chosen symmetric. Its marginal is
$\mathcal G$-measurable. For every bounded $\mathcal G$-measurable test
function $g$, conditional expectation and Fubini's theorem give
\begin{equation}
 \int_0^1g(x)\left(\int_0^1W^P(x,y)\dd y\right)\dd x
 =\int_{[0,1]^2}g(x)W(x,y)\dd x\dd y
 =\int_0^1g(x)d(x)\dd x.
\label{eq:quotient-marginal-test}
\end{equation}
Consequently, \eqref{eq:quotient-marginal-test} gives
$d_{W^P}=Pd$ almost everywhere. Thus this definition includes
both finite measurable block quotients and the quotients used below that
collapse one measurable set while retaining the full measurable structure on
its complement.

\begin{lemma}\label{lem:quotient-identity}
For every finite weighted graph quotient and, in the graphon setting, for
every quotient \eqref{eq:general-quotient} associated with a sub-$\sigma$-algebra
$\mathcal G$, we have
\begin{equation}
 \Phi(W^P)-\Phi(W)
 =2\ip{z}{B}-\|z\|_2^2+\ip{z}{Wz},
 \label{eq:quotient-identity}
\end{equation}
and
\begin{equation}
 \Dcal(W^P)=\Dcal(W)-\|z\|_2^2.
 \label{eq:normsq-drop}
\end{equation}
\end{lemma}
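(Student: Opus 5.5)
We will derive both identities from the operator form \eqref{eq:operator-form} of Lemma~\ref{lem:dirichlet}, applied to $W$ and to $W^P$. The argument is written in the graphon language; the finite weighted case is the special case where $\mathcal G$ is generated by the blocks of $\mathcal P$. The first step collects the basic facts about the quotient. We have $d_{W^P}=Pd$ by \eqref{eq:quotient-marginal-test}. Integrating this gives $\beta_{W^P}=\int Pd=\int d=\beta$, so the term $\beta(1-\beta)$ is unchanged. Writing $z=d-Pd$, the function $Pd$ is $\mathcal G$-measurable and $\mathbb E[z\mid\mathcal G]=0$, hence $\ip{z}{Pd}=0$. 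This orthogonality immediately gives $\|d\|_2^2=\|Pd\|_2^2+\|z\|_2^2$, which is \eqref{eq:normsq-drop}.

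The second step computes $\ip{Pd}{W^PPd}$. The function $(x,y)\mapsto Pd(x)Pd(y)$ is bounded and $\mathcal G\otimes\mathcal G$-measurable. Testing the conditional expectation against it gives
\[
\ip{Pd}{W^PPd}=\ip{Pd}{W\,Pd}.
\]
Substituting $Pd=d-z$ and using symmetry of $W$, this becomes
\[
\ip{d}{Wd}-2\ip{z}{Wd}+\ip{z}{Wz}.
\]

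The third step assembles the pieces using \eqref{eq:operator-form}:
\[
\Phi(W^P)-\Phi(W)=\bigl(\|d\|_2^2-\|Pd\|_2^2\bigr)-2\ip{z}{Wd}+\ip{z}{Wz}=\|z\|_2^2-2\ip{z}{Wd}+\ip{z}{Wz}.
\]
We now substitute $Wd=d-B$, which is \eqref{eq:B-def}. This gives $-2\ip{z}{Wd}=-2\ip{z}{d}+2\ip{z}{B}$. Since $\ip{z}{Pd}=0$, we have $\ip{z}{d}=\ip{z}{z}=\|z\|_2^2$. The total is therefore $2\ip{z}{B}-\|z\|_2^2+\ip{z}{Wz}$, which is \eqref{eq:quotient-identity}.

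The only delicate point is measure-theoretic. The test-function identity for $W^P$ uses the product function $Pd(x)Pd(y)$. This function is measurable with respect to the product $\sigma$-algebra $\mathcal G\otimes\mathcal G$, and conditional expectation on $[0,1]^2$ is taken with respect to that $\sigma$-algebra, so the identity applies. Since everything is bounded, Fubini's theorem justifies all the rearrangements.
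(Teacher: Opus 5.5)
Your proof is correct and follows essentially the same route as the paper. Both arguments apply the operator form \eqref{eq:operator-form} to $W$ and $W^P$, use the $\mathcal G\otimes\mathcal G$ test-function identity $\ip{Pd}{W^PPd}=\ip{Pd}{WPd}$, substitute $Pd=d-z$ and $B=d-Wd$, and get \eqref{eq:normsq-drop} from Pythagoras. The only cosmetic difference is that you use $\ip{z}{Pd}=0$ already in the first identity, whereas the paper expands $\|d\|_2^2-\|d-z\|_2^2=2\ip{d}{z}-\|z\|_2^2$ algebraically and uses orthogonality only for $\Dcal$.
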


\begin{proof}
The quotient preserves the total edge density $\beta$. Since $Pd$ is
$\mathcal G$-measurable in the graphon setting, conditional expectation
against the $\mathcal G\otimes\mathcal G$-measurable function
$(x,y)\mapsto Pd(x)Pd(y)$ gives
\begin{equation}
 \ip{Pd}{W^PPd}=\ip{Pd}{WPd}.
\label{eq:quotient-bilinear}
\end{equation}
The same identity follows by direct weighted summation in the finite model.
Using $Pd=d-z$, \eqref{eq:quotient-bilinear}, and
\eqref{eq:operator-form}, we obtain
\begin{align*}
 \Phi(W^P)-\Phi(W)
 &=-\|d-z\|_2^2+\|d\|_2^2
   +\ip{d-z}{W(d-z)}-\ip{d}{Wd}\\
 &=2\ip{d}{z}-\|z\|_2^2
   -2\ip{z}{Wd}+\ip{z}{Wz}\\
 &=2\ip{z}{d-Wd}-\|z\|_2^2+\ip{z}{Wz}.
\end{align*}
Since $B=d-Wd$ by \eqref{eq:B-def}, this is
\eqref{eq:quotient-identity}. Finally, $P$ is an orthogonal projection and
$z=d-Pd$ is orthogonal to $Pd$. Hence
\begin{equation}
 \|d\|_2^2=\|Pd\|_2^2+\|z\|_2^2.
\label{eq:quotient-pythagoras}
\end{equation}
Equation \eqref{eq:quotient-pythagoras} is exactly
\eqref{eq:normsq-drop} since $d_{W^P}=Pd$.
\end{proof}

\begin{lemma}\label{lem:quotient-rigid}
For the chosen extremal graphon,
\begin{equation}
 Pd\ne d\quad\Longrightarrow\quad\Phi(W^P)<\Phi(W).
 \label{eq:strict-quotient}
\end{equation}
\end{lemma}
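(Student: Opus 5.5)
Since $W$ is a chosen extremal graphon, it maximizes $\Phi$, and within the set of maximizers it minimizes $\Dcal$. Every quotient $W^P$, whether from a finite block partition or from a sub-$\sigma$-algebra $\mathcal G$ as in \eqref{eq:general-quotient}, is again a graphon. Lemma~\ref{lem:sequence-bridge} says $\Phi(W)=\max_U\Phi(U)$ over all graphons, so $\Phi(W^P)\le\Phi(W)$ always holds. It therefore suffices to rule out equality when $Pd\ne d$.

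Suppose $\Phi(W^P)=\Phi(W)$. Then $W^P$ is itself a maximizer of $\Phi$, so it belongs to the compact set of maximizers on which $W$ was chosen to minimize $\Dcal$. By \eqref{eq:normsq-drop} of Lemma~\ref{lem:quotient-identity}, with $z=d-Pd$,
\[
 \Dcal(W^P)=\Dcal(W)-\|z\|_2^2 .
\]
Since $Pd\ne d$ means $z\ne0$ on a set of positive measure (or, in the finite model, at some vertex of positive weight), we get $\|z\|_2^2>0$. Hence $\Dcal(W^P)<\Dcal(W)$, which contradicts the minimality of $\Dcal(W)$ among maximizers. So the inequality is strict, which proves \eqref{eq:strict-quotient}.

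One technical point needs checking. Minimality was imposed in the quotient space modulo weak isomorphism, so we must make sure that $W^P$ gives an element of that space and that both $\Phi$ and $\Dcal$ are computed consistently there. This holds because both functionals are homomorphism-density combinations and hence invariant under weak isomorphism. In the finite weighted setting, $W^P$ is a step graphon and the same argument applies verbatim. I expect no step to be a genuine obstacle. The only care needed is to note that "$Pd\ne d$" is meant almost everywhere, so that $\|z\|_2>0$.
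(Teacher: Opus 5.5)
Your proposal is correct and is the paper's argument: if equality held, $W^P$ would be another maximizer of $\Phi$, and \eqref{eq:normsq-drop} with $\|z\|_2>0$ would give $\Dcal(W^P)<\Dcal(W)$, contradicting the choice of $W$. You also state explicitly two points the paper's one-line proof leaves implicit: that $\Phi(W^P)\le\Phi(W)$ because $W$ is a global maximizer, and that both functionals are invariant under weak isomorphism.
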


\begin{proof}
If equality held, then $W^P$ would also maximize $\Phi$, while
\eqref{eq:normsq} and \eqref{eq:normsq-drop} would give a smaller value of
$\Dcal(W)$.
\end{proof}

\subsection{The half-slope inequality}\label{sec:half-slope}

Let $A,C$ be disjoint vertex sets of weight $\delta$, and let $P$ merge
$A\cup C$ into one quotient block while fixing all other vertices. Put
\begin{align*}
 r&=\frac1\delta\sum_{i\in A}\alpha_i d_i,
 &u&=\frac1\delta\sum_{i\in C}\alpha_i d_i,
 &\Delta&=u-r,\\
 b_A&=\frac1\delta\sum_{i\in A}\alpha_i B_i,
 &b_C&=\frac1\delta\sum_{i\in C}\alpha_i B_i.
\end{align*}
and
\[
 V_A=\sum_{i\in A}\alpha_i(d_i-r)^2,
 \quad V_C=\sum_{i\in C}\alpha_i(d_i-u)^2,
\]
\[
 \Gamma_A=\sum_{i\in A}\alpha_i(d_i-r)(B_i-b_A),
 \quad \Gamma_C=\sum_{i\in C}\alpha_i(d_i-u)(B_i-b_C).
\]
For graphons, replace the sums by integrals over measurable sets. Let
$\mu=d_\#\lambda$ be the degree distribution. By
\eqref{eq:degree-measurable}, $B$ is a function of the degree variable.

\begin{lemma}\label{lem:two-set-quotient}
With $z=d-Pd$,
\begin{align}
 \Phi(W^P)-\Phi(W)
 ={}&\delta\Delta\left(b_C-b_A-\frac\Delta2\right)
 +2(\Gamma_A+\Gamma_C)-(V_A+V_C)+\ip{z}{Wz}.
 \label{eq:two-set-quotient}
\end{align}
Moreover,
\begin{equation}
 |\ip{z}{Wz}|\le\|z\|_1^2\le4\delta^2.
 \label{eq:quadratic-error}
\end{equation}
\end{lemma}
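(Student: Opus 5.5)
The identity \eqref{eq:two-set-quotient} should follow from Lemma~\ref{lem:quotient-identity} by computing $z$ explicitly and expanding $2\ip{z}{B}-\|z\|_2^2$. The bound \eqref{eq:quadratic-error} is then a crude pointwise estimate. I would treat the finite weighted case with sums; the graphon case is the same computation with integrals over the measurable sets $A,C$. There, $P$ is the conditional expectation onto the $\sigma$-algebra generated by the Lebesgue subsets of the complement of $A\cup C$ together with the single atom $A\cup C$. The only thing to check there is that $Pd$ equals the average of $d$ on $A\cup C$ and equals $d$ elsewhere.

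\emph{Step 1: describe $z$.} Since $P$ fixes every vertex outside $A\cup C$, we have $z=0$ there. On the merged block of weight $2\delta$, $Pd$ is the constant $m:=(\delta r+\delta u)/(2\delta)=r+\Delta/2$. Hence
\[
 z_i=(d_i-r)-\tfrac{\Delta}{2}\ (i\in A),\qquad
 z_i=(d_i-u)+\tfrac{\Delta}{2}\ (i\in C).
\]
The key observation is that, by the definitions of $r$ and $u$, the centred parts have zero mean:
\[
 \sum_{i\in A}\alpha_i(d_i-r)=0,\qquad \sum_{i\in C}\alpha_i(d_i-u)=0.
\]

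\emph{Step 2: expand.} Using these zero-mean relations, the cross terms vanish in $\|z\|_2^2$, which gives
\[
 \|z\|_2^2=V_A+V_C+\tfrac{\delta\Delta^2}{2}.
\]
Similarly, $\sum_{i\in A}\alpha_i(d_i-r)B_i=\Gamma_A$, because subtracting the constant $b_A$ costs nothing against a zero-mean factor. Therefore
\[
 \ip{z}{B}=\Gamma_A-\tfrac{\Delta}{2}\delta b_A+\Gamma_C+\tfrac{\Delta}{2}\delta b_C .
\]
Substituting both expressions into \eqref{eq:quotient-identity} gives
\[
 \delta\Delta(b_C-b_A)-\tfrac{\delta\Delta^2}{2}+2(\Gamma_A+\Gamma_C)-(V_A+V_C)+\ip{z}{Wz},
\]
which is exactly \eqref{eq:two-set-quotient}. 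In the graphon case, Fubini's theorem applies because all integrands are bounded.

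\emph{Step 3: the error bound.} Since $0\le W\le1$, we get
\[
 |\ip{z}{Wz}|\le\sum_{i,j}\alpha_i\alpha_jW_{ij}|z_i||z_j|\le\|z\|_1^2 .
\]
Moreover $d_i$ and $m$ both lie in $[0,1]$, so $|z_i|\le1$. As $z$ is supported on $A\cup C$, which has weight $2\delta$, we obtain $\|z\|_1\le2\delta$ and hence $\|z\|_1^2\le4\delta^2$.

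None of these steps is a real obstacle. The only point needing care is the graphon version of the quotient: one must verify that the conditional expectation onto the chosen sub-$\sigma$-algebra acts as stated, namely averaging on the atom $A\cup C$ and acting as the identity elsewhere. This follows directly from the defining property of conditional expectation, tested against indicators of the atom and of measurable subsets of its complement.
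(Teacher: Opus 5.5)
Your proposal is correct and follows essentially the same route as the paper. Both arguments write $z=(d-r)-\Delta/2$ on $A$ and $z=(d-u)+\Delta/2$ on $C$, and use the zero-mean property of the centred parts to obtain $\ip{z}{B}$ and $\|z\|_2^2$. Both then substitute into Lemma~\ref{lem:quotient-identity} and bound $|\ip{z}{Wz}|\le\|z\|_1^2\le(2\delta)^2$, using $0\le W\le1$ and $|z|\le1$ on a support of weight $2\delta$.
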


\begin{proof}
The common quotient value of $d$ on $A\cup C$ is $(r+u)/2$. Hence, on
$A$ and $C$, respectively,
\begin{equation}
 z=(d-r)-\frac\Delta2,
 \qquad z=(d-u)+\frac\Delta2.
\label{eq:two-set-z}
\end{equation}
Since the centered terms have integral zero on their corresponding sets,
\begin{align*}
 \ip{z}{B}
 =\Gamma_A-\frac{\delta\Delta}{2}b_A
   +\Gamma_C+\frac{\delta\Delta}{2}b_C
   \quad\text{ and }\quad
 \|z\|_2^2
 =V_A+V_C+2\delta\frac{\Delta^2}{4}
  =V_A+V_C+\frac{\delta\Delta^2}{2}.
\end{align*}
Substituting \eqref{eq:two-set-z} into \eqref{eq:quotient-identity} gives
\eqref{eq:two-set-quotient}. Moreover, $z$ is supported on $A\cup C$ and
$|z|\le1$. Thus, using $0\le W\le1$,
\[
 |\ip{z}{Wz}|
 \le\int\!\!\int |z(x)||z(y)|\dd x\dd y
 =\|z\|_1^2\le(2\delta)^2,
\]
which proves \eqref{eq:quadratic-error}.
\end{proof}

\begin{lemma}\label{lem:half-slope}
For $\mu^2$-almost every $r<u$,
\begin{equation}
 B(u)-B(r)\le\frac{u-r}{2}.
 \label{eq:half-slope}
\end{equation}
Equivalently,
\begin{equation}
 \chi(t):=\frac t2-B(t)
 \label{eq:chi}
\end{equation}
has a nondecreasing representative.
\end{lemma}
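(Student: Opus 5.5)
We will prove \eqref{eq:half-slope} by contradiction, combining the two-set quotient formula of Lemma~\ref{lem:two-set-quotient} with the strict rigidity of Lemma~\ref{lem:quotient-rigid}. After the degree-quotient replacement, $W(x,y)=w(d(x),d(y))$ and $B(x)=B(d(x))$ almost everywhere. Suppose the claim fails. Then the set
\[
S:=\{(r,u):r<u,\ B(u)-B(r)>\tfrac{u-r}{2}\}
\]
has positive $\mu\otimes\mu$-measure. By Lebesgue density (applied to $\mu\otimes\mu$ and the Borel function $B$), we can find a density point $(r_0,u_0)$ of $S$ that is also a point of approximate continuity of $B$ with respect to $\mu$ at both coordinates, together with some $\eta>0$ such that $B(u_0)-B(r_0)\ge \frac{u_0-r_0}{2}+3\eta$.

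For small $\varepsilon>0$, let $A_\varepsilon:=d^{-1}([r_0-\varepsilon,r_0+\varepsilon])$ and $C_\varepsilon:=d^{-1}([u_0-\varepsilon,u_0+\varepsilon])$, intersected with the sets where $B$ is within $\eta/2$ of $B(r_0)$ and of $B(u_0)$, respectively. Both have positive measure because $r_0,u_0$ lie in the support of $\mu$. Fix any $\delta>0$ smaller than both measures, and choose measurable subsets $A\subseteq A_\varepsilon$ and $C\subseteq C_\varepsilon$ of measure exactly $\delta$; this is possible since Lebesgue measure is nonatomic. Take $P$ to be the quotient that merges $A\cup C$ and keeps the full $\sigma$-algebra elsewhere. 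This $P$ is an instance of \eqref{eq:general-quotient}, so Lemma~\ref{lem:quotient-identity} applies, and $Pd\ne d$ because $u_0-r_0>2\varepsilon$.

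We then estimate the terms of \eqref{eq:two-set-quotient}. The main term satisfies $\delta\Delta\bigl(b_C-b_A-\Delta/2\bigr)\ge\delta\Delta(2\eta-O(\varepsilon))$, and $\Delta\ge u_0-r_0-2\varepsilon>0$. Since $d$ varies by at most $2\varepsilon$ on each set, we have $V_A,V_C\le 4\varepsilon^2\delta$ and $|\Gamma_A|,|\Gamma_C|\le 2\varepsilon\,\eta\,\delta$. The quadratic error satisfies $|\ip{z}{Wz}|\le4\delta^2$ by \eqref{eq:quadratic-error}. Hence
\[
\Phi(W^P)-\Phi(W)\ge \delta\bigl[\Delta(2\eta-O(\varepsilon))-O(\varepsilon\eta+\varepsilon^2)-4\delta\bigr].
\]
First choose $\varepsilon$ small, then $\delta$ small, so that the bracket is positive. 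This gives $\Phi(W^P)>\Phi(W)$, which contradicts the maximality of $W$; in fact even $\ge$ would already contradict Lemma~\ref{lem:quotient-rigid}.

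For the equivalent formulation, \eqref{eq:half-slope} says that $\chi(u)\ge\chi(r)$ for $\mu^2$-a.e.\ $r<u$. Define $\tilde\chi(t):=\operatorname{ess\,sup}_\mu\{\chi(s):s\le t\}$. This function is nondecreasing, and it agrees with $\chi$ $\mu$-a.e., by a standard Fubini argument on the exceptional null set.

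The main obstacle is the measure-theoretic localization. We must make sure that $B$ is genuinely close to $B(r_0)$ and $B(u_0)$ on sets of positive measure, rather than only at points, and that these sets have positive measure inside thin degree windows, so that the linear term of order $\delta$ dominates the $O(\delta^2)$ error. Restricting $A_\varepsilon$ and $C_\varepsilon$ to the level sets of $B$ as above is what handles this. In the finite weighted case the argument is simpler: take $A$ and $C$ to be portions of single degree classes, for which $V=\Gamma=0$.
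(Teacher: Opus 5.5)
Your proposal is correct and follows essentially the same route as the paper. Both arguments localize the two-set quotient formula of Lemma~\ref{lem:two-set-quotient} at $\mu$-Lebesgue (approximate-continuity) points of $B$, so that the order-$\delta$ main term dominates the variance, covariance and quadratic error terms. Both then extract a monotone representative of $\chi$.

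The differences are cosmetic. You argue by contradiction at a single bad pair and treat atoms uniformly rather than in separate cases. You also build the representative as a running essential supremum instead of the paper's nested upper rays. One small correction: your truncated windows have positive measure because of approximate continuity at $r_0,u_0$, not merely because $r_0,u_0$ lie in the support of $\mu$.
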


\begin{proof}
Suppose first that the degree classes
$\{d=r\}$ and $\{d=u\}$ have positive weight, where $r<u$. Splitting a
weighted vertex into identical clones does not change any degree, edge
density, or value of $\Phi$. We may therefore choose equal-weight subsets
$A\subseteq\{d=r\}$ and $C\subseteq\{d=u\}$ of arbitrarily small weight
$\delta$. On these sets $d$ and $B$ are constant, so
$V_A=V_C=\Gamma_A=\Gamma_C=0$. The two-set quotient formula becomes
\begin{equation}
 \Phi(W^P)-\Phi(W)
 =\delta(u-r)\left(B(u)-B(r)-\frac{u-r}{2}\right)+O(\delta^2).
\label{eq:atomic-half-slope-variation}
\end{equation}
If the bracket were positive, a sufficiently small $\delta$ would produce a
larger admissible quotient by \eqref{eq:atomic-half-slope-variation},
contradicting maximality. This proves
\eqref{eq:half-slope} whenever both degree values are positive-weight degree
classes. The same argument applies to two positive-measure graphon degree
fibers.

It remains to treat pairs from the non-atomic part of the degree
distribution. Since $B\in L^\infty(\mu)$ and $\mu$ is a finite Borel measure
on $[0,1]$, the one-dimensional Lebesgue--Besicovitch differentiation theorem
for finite Borel measures implies that $B$ has a $\mu$-Lebesgue value at
$\mu$-almost every degree; see, for example, \cite[Chapter~3]{Folland}.
Choose two such non-atomic points $r_0<u_0$.

Let $\eta_k\downarrow0$. At the two Lebesgue points, we may choose radii
$0<\rho_k^r,\rho_k^u\le\eta_k$
such that the intervals
\[
 I_k^r=(r_0-\rho_k^r,r_0+\rho_k^r)\cap[0,1],
 \qquad
 I_k^u=(u_0-\rho_k^u,u_0+\rho_k^u)\cap[0,1]
\]
are disjoint, have positive $\mu$-measure, and satisfy
\begin{align}
 \frac{\mu\{t\in I_k^r:|B(t)-B(r_0)|>\eta_k\}}
      {\mu(I_k^r)}
 <\eta_k
 \quad\text{ and }\quad
 \frac{\mu\{t\in I_k^u:|B(t)-B(u_0)|>\eta_k\}}
      {\mu(I_k^u)}
 <\eta_k.
 \label{eq:Lebesgue-slices}
\end{align}
Choose the radii after fixing $\eta_k$, so that the corresponding normalized
integrals of $|B-B(r_0)|$ and $|B-B(u_0)|$ are below $\eta_k^2$;
\eqref{eq:Lebesgue-slices} then follows from Markov's inequality.

Let $G_k^r$ and $G_k^u$ be the corresponding good portions, on which the
respective inequalities $|B(t)-B(r_0)|\le\eta_k$ and
$|B(t)-B(u_0)|\le\eta_k$ hold. By \eqref{eq:Lebesgue-slices}, both good
portions have positive $\mu$-measure. Since the underlying vertex space
$([0,1],\lambda)$ is non-atomic, we can choose
$A_k\subseteq d^{-1}(G_k^r)$ and
$C_k\subseteq d^{-1}(G_k^u)$ with common measure
$0<|A_k|=|C_k|=:\delta_k\le\eta_k$.
Write $r_k,u_k,b_{A_k},b_{C_k}$ for the averages used in
Lemma~\ref{lem:two-set-quotient}, and put $\Delta_k=u_k-r_k$. By construction,
\begin{align*}
 r_k&=r_0+O(\eta_k),&
 u_k&=u_0+O(\eta_k),\\
 b_{A_k}&=B(r_0)+O(\eta_k),&
 b_{C_k}&=B(u_0)+O(\eta_k).
\end{align*}
Moreover, on each selected set both the degree and the value of $B$ vary by
$O(\eta_k)$. Therefore
\[
 V_{A_k}+V_{C_k}
 +|\Gamma_{A_k}|+|\Gamma_{C_k}|
 =O(\eta_k^2\delta_k).
\]
For the remaining quadratic term we use the uniform estimate
\eqref{eq:quadratic-error} directly:
\[
 |\ip{z}{Wz}|\le4\delta_k^2.
\]
Maximality and \eqref{eq:two-set-quotient} now give
\begin{equation}
0\ge
 \delta_k\Delta_k
 \left(b_{C_k}-b_{A_k}-\frac{\Delta_k}{2}\right)
 +O(\eta_k^2\delta_k+\delta_k^2).
\label{eq:approx-half-slope}
\end{equation}
After division by $\delta_k$, the error is
$O(\eta_k^2+\delta_k)=o(1)$ since $\delta_k\le\eta_k$. Letting
$k\to\infty$ in \eqref{eq:approx-half-slope} yields
\begin{equation}
 (u_0-r_0)
 \left(B(u_0)-B(r_0)-\frac{u_0-r_0}{2}\right)\le0.
\label{eq:half-slope-limit}
\end{equation}
Since $u_0>r_0$, \eqref{eq:half-slope-limit} implies
\eqref{eq:half-slope} for such a pair.

If exactly one of the two degree values is an atom of $\mu$, choose the set
on the atomic side inside its exact degree fiber and choose the other set
inside the corresponding good portion above, with the same measure
$\delta_k\le\eta_k$. The variance and covariance on the exact degree fiber
vanish, and the preceding estimates on the non-atomic side are unchanged.
The case of two atoms was proved at the start. Hence
\eqref{eq:half-slope} holds for $\mu^2$-almost every ordered pair $r<u$.

For the representative assertion, suppose that a measurable
function $f$ satisfies $f(r)\le f(u)$ for $\mu^2$-almost every $r<u$. For
$q\in\mathbb Q$, put $E_q=\{f>q\}$. The assumed order relation gives
\begin{equation}
 \mu^2\{(r,u):r<u,\ r\in E_q,\ u\notin E_q\}=0.
\label{eq:no-inversion}
\end{equation}
If $0<\mu(E_q)<1$, let
$s_q:=\inf\{s:\mu(E_q\cap[0,s])>0\}$. Then
$E_q\cap[0,s_q)$ is null, while \eqref{eq:no-inversion} forces
$(s,1]\setminus E_q$ to be null for every $s>s_q$. Thus $E_q$ agrees
$\mu$-almost everywhere with an upper ray. At an atom $\{s_q\}$, include
the threshold exactly when $E_q$ has full measure in that atom. Since
$E_{q_2}\subseteq E_{q_1}$ for $q_1<q_2$, these upper-ray representatives
$E_q^\ast$ may be chosen nested; the cases $\mu(E_q)\in\{0,1\}$ are
included.
The function
$\widetilde f(t):=\sup\{q\in\mathbb Q:t\in E_q^\ast\}$,
where $E_q^\ast$ are these nested versions, is nondecreasing and equals $f$
$\mu$-almost everywhere. Apply this construction to $f=\chi$. We fix the
resulting representative and set $B(t)=t/2-\chi(t)$ at any remaining
$\mu$-null degree values.
\end{proof}

\begin{lemma}\label{lem:ordered-constancy}
Let $\nu$ be a finite Borel measure on $[0,1]$, and let $f$ be
nondecreasing. If
\[
 f(r)=f(u)
 \qquad\text{for $\nu^2$-almost every pair }r<u,
\]
then $f$ is constant $\nu$-almost everywhere.
\end{lemma}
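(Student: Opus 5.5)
The plan is to show that a nondecreasing $f$ with no "strict increase visible to $\nu^2$" is constant $\nu$-a.e., by locating a single threshold value $c$ and proving $f=c$ $\nu$-almost everywhere. If $\nu=0$ there is nothing to prove, so assume $\nu([0,1])>0$. The key observation is that the set of ordered pairs $r<u$ with $f(r)<f(u)$ is $\nu^2$-null by hypothesis, while pairs $r>u$ are symmetric and the diagonal contributes only through atoms, where $f(r)=f(u)$ trivially. Hence $f(r)=f(u)$ for $\nu^2$-almost every pair $(r,u)\in[0,1]^2$, not just ordered ones. This symmetrization is routine: the hypothesis gives it on $\{r<u\}$, the reflection $(r,u)\mapsto(u,r)$ preserves $\nu^2$ and gives it on $\{r>u\}$, and on the diagonal $f(r)=f(r)$.

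With this in hand, I will apply Fubini's theorem (in the form stated for bounded measurable functions; $\one_{\{f(r)\ne f(u)\}}$ is bounded and Borel since a monotone $f$ is Borel) to
\[
 0=\nu^2\{(r,u):f(r)\ne f(u)\}=\int_{[0,1]}\nu\{u:f(u)\ne f(r)\}\dd\nu(r).
\]
So for $\nu$-almost every $r$, $\nu\{u:f(u)\ne f(r)\}=0$. Since $\nu$ is nonzero, there is at least one such $r_0$; put $c:=f(r_0)$. Then $f=c$ $\nu$-almost everywhere, which is the claim. Strictly speaking Fubini is stated in the paper for Lebesgue measure on cubes, so I will either invoke the standard Tonelli theorem for finite Borel product measures or note that the same proof applies; monotonicity of $f$ is only used to guarantee measurability.

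An alternative, more hands-on route avoiding Fubini for general measures: let $c:=\sup\{q\in\mathbb Q:\nu\{f<q\}=0\}$, and show $\nu\{f<c\}=0$ and $\nu\{f>c\}=0$. If both $\nu\{f<q_1\}>0$ and $\nu\{f>q_1\}>0$ for some rational $q_1$, then, since $f$ is nondecreasing, $\{f<q_1\}$ lies to the left of $\{f>q_1\}$, so their product lies in $\{r<u\}$ with $f(r)<f(u)$ and has positive $\nu^2$-measure, contradicting the hypothesis. Taking countable unions over rationals then gives constancy. The only delicate point I expect is bookkeeping at the threshold value, which the rational sweep handles; this sweep is the step where the monotonicity genuinely matters, and I would present it as the main argument since it parallels the upper-ray construction in the proof of Lemma~\ref{lem:half-slope}.
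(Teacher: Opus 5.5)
Your proposal is correct and contains two arguments. The rational sweep in your last paragraph is essentially the paper's proof. The paper simply asserts that a non-constant $f$ admits a threshold $q$ with $\nu\{f\le q\}>0$ and $\nu\{f>q\}>0$. Monotonicity then places the product of these two sets inside $\{r<u\}$ with $f(r)\ne f(u)$, and $\nu^2$ of a rectangle is the product of the two masses, which is positive. Your choice $c=\sup\{q\in\mathbb Q:\nu\{f<q\}=0\}$ just makes the existence of that threshold explicit; $c$ is finite because $f(0)\le f\le f(1)$ and $\nu\ne0$.

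Your first route is genuinely different. You symmetrize via the reflection $(r,u)\mapsto(u,r)$, which preserves $\nu^2$, note that the diagonal never contributes, and apply Tonelli to $\one_{\{f(r)\ne f(u)\}}$. This uses monotonicity only to guarantee that $f$ is Borel, so it proves the statement for every Borel function $f$; the ordering of level sets plays no role. The price is an appeal to Tonelli for a general finite Borel product measure, rather than the Lebesgue-cube Fubini stated in the paper. By contrast, the paper's argument needs nothing beyond measurability of the level sets and the measure of a single product rectangle.
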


\begin{proof}
If $f$ were not essentially constant, there would be a real number $q$ such
that both $\{f\le q\}$ and $\{f>q\}$ have positive $\nu$-measure. For every
$r$ in the first set and $u$ in the second set, monotonicity implies
$r<u$, while $f(r)\ne f(u)$. Their Cartesian product therefore has positive
$\nu^2$-measure, contradicting the hypothesis.
\end{proof}

\subsection{Edge variations and high-degree rows}\label{sec:fiber-splitting}

For a finite weighted red graph, let $K=(K_{ij})$ be a symmetric perturbation
of the edge-density matrix and write
\[
 k_i:=\sum_j\alpha_jK_{ij},
 \qquad \eta:=\sum_i\alpha_ik_i.
\]

\begin{lemma}\label{lem:variation}
For every sufficiently small $\varepsilon$ such that
$0\le W_{ij}+\varepsilon K_{ij}\le1$ for all $i,j$,
\begin{align}
 \Phi(W+\varepsilon K)-\Phi(W)
 ={}&\varepsilon\sum_{i,j}\alpha_i\alpha_jL_{ij}K_{ij}
 +\varepsilon^2\mathcal Q_W(K)\notag\\
 &\quad+\varepsilon^3\ip{k}{Kk},
 \label{eq:variation}
\end{align}
where
\begin{equation}
 L_{ij}=1-2\beta+d_id_j-B_i-B_j,
 \label{eq:Euler-kernel}
\end{equation}
and
\begin{equation}
 \mathcal Q_W(K)
 =-\eta^2-\|k\|_2^2+\ip{k}{Wk}+2\ip{k}{Kd}.
 \label{eq:Qform}
\end{equation}
The same expansion holds for bounded graphon perturbations after replacing
the first sum by the corresponding double integral.
\end{lemma}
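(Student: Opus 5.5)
The plan is to expand $\Phi(W+\varepsilon K)$ exactly, using the operator form \eqref{eq:operator-form} of Lemma~\ref{lem:dirichlet}. Nothing is taken to a limit: \eqref{eq:variation} is a polynomial identity in $\varepsilon$, and the constraint $0\le W+\varepsilon K\le1$ only guarantees that $W+\varepsilon K$ is again an admissible weighted graph. Then \eqref{eq:operator-form} applies to it, and its degrees are $d+\varepsilon k$.

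\textbf{Step 1: the three ingredients.} By linearity, the perturbed graph $W':=W+\varepsilon K$ has degree vector $d'=d+\varepsilon k$ and edge density $\beta'=\beta+\varepsilon\eta$. Substituting these into
\[
 \Phi(W')=\beta'(1-\beta')-\|d'\|_2^2+\ip{d'}{W'd'},
\]
I expand each of the three terms separately.
\begin{itemize}
\item Density term: $\beta'(1-\beta')=\beta(1-\beta)+\varepsilon\eta(1-2\beta)-\varepsilon^2\eta^2$.
\item Square term: $-\|d'\|_2^2=-\|d\|_2^2-2\varepsilon\ip{d}{k}-\varepsilon^2\|k\|_2^2$.
\item Bilinear term: using the symmetry of $W$ and $K$,
\[
 \ip{d'}{W'd'}=\ip{d}{Wd}+\varepsilon\bigl(2\ip{k}{Wd}+\ip{d}{Kd}\bigr)+\varepsilon^2\bigl(\ip{k}{Wk}+2\ip{k}{Kd}\bigr)+\varepsilon^3\ip{k}{Kk}.
\]
\end{itemize}

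\textbf{Step 2: the $\varepsilon^2$ and $\varepsilon^3$ terms.} Collecting the $\varepsilon^2$ coefficients gives exactly $\mathcal Q_W(K)$ as in \eqref{eq:Qform}. The only $\varepsilon^3$ term is $\ip{k}{Kk}$.

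\textbf{Step 3: the first-order term.} This is the one step needing care. The linear coefficient is
\[
 \eta(1-2\beta)+\ip{d}{Kd}-2\ip{k}{d-Wd}=\eta(1-2\beta)+\ip{d}{Kd}-2\ip{k}{B},
\]
using $B=d-Wd$ from \eqref{eq:B-def}. I rewrite each piece as a double sum against $K_{ij}$:
\begin{itemize}
\item $\eta=\sum_{i,j}\alpha_i\alpha_jK_{ij}$;
\item $\ip{d}{Kd}=\sum_{i,j}\alpha_i\alpha_jd_id_jK_{ij}$;
\item $\ip{k}{B}=\sum_{i,j}\alpha_i\alpha_jK_{ij}B_i$.
\end{itemize}
By the symmetry of $K$, $\ip{k}{B}$ also equals $\sum_{i,j}\alpha_i\alpha_jK_{ij}B_j$. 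Hence $2\ip{k}{B}=\sum_{i,j}\alpha_i\alpha_jK_{ij}(B_i+B_j)$, and the linear coefficient is $\sum_{i,j}\alpha_i\alpha_jL_{ij}K_{ij}$ with $L_{ij}$ as in \eqref{eq:Euler-kernel}.

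\textbf{Step 4: graphons.} For a bounded symmetric perturbation $K$ of a graphon, the same computation goes through verbatim, starting from the graphon version of \eqref{eq:operator-form} in Lemma~\ref{lem:dirichlet}. All integrands are bounded and measurable, so the stated Fubini theorem justifies each interchange of integration, in particular $\int_0^1 k(x)B(x)\dd x=\int_{[0,1]^2}K(x,y)B(x)\dd x\dd y$ and its symmetrized form. I expect no real obstacle. The only point to watch is that symmetry of $K$ is used to split $2\ip{k}{B}$ into $B_i+B_j$, and symmetry of $W$ is used to combine the cross terms $\ip{k}{Wd}$ and $\ip{d}{Wk}$.
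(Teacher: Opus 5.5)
Your proposal is correct and follows essentially the same route as the paper: substitute $d+\varepsilon k$ and $\beta+\varepsilon\eta$ into the operator form \eqref{eq:operator-form}, expand the three terms, and use the symmetry of $K$ to rewrite the linear coefficient $(1-2\beta)\eta-2\ip{k}{B}+\ip{d}{Kd}$ as $\sum_{i,j}\alpha_i\alpha_jL_{ij}K_{ij}$. Your remark that \eqref{eq:variation} is an exact polynomial identity in $\varepsilon$ corresponds to the paper's closing observation that \eqref{eq:operator-form} is cubic in $W$, so there are no higher-order terms.
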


\begin{proof}
Insert $d_\varepsilon=d+\varepsilon k$ and
$\beta_\varepsilon=\beta+\varepsilon\eta$ into \eqref{eq:operator-form}.
The three terms in \eqref{eq:operator-form} expand as
\begin{align*}
 \beta_\varepsilon(1-\beta_\varepsilon)
 &=\beta(1-\beta)+\varepsilon(1-2\beta)\eta
   -\varepsilon^2\eta^2,\\
 -\|d_\varepsilon\|_2^2
 &=-\|d\|_2^2-2\varepsilon\ip{d}{k}
   -\varepsilon^2\|k\|_2^2,\\
 \ip{d_\varepsilon}{(W+\varepsilon K)d_\varepsilon}
 &=\ip{d}{Wd}
   +\varepsilon\bigl(2\ip{k}{Wd}+\ip{d}{Kd}\bigr)\\
 &\quad+\varepsilon^2\bigl(\ip{k}{Wk}+2\ip{k}{Kd}\bigr)
   +\varepsilon^3\ip{k}{Kk}.
\end{align*}
Here symmetry of $W$ and $K$ was used. The linear coefficient is
\[
 (1-2\beta)\eta-2\ip{k}{d-Wd}+\ip{d}{Kd}.
\]
Since $k_i=\sum_j\alpha_jK_{ij}$ and $K_{ij}=K_{ji}$, this coefficient equals
\begin{align*}
 \sum_{i,j}\alpha_i\alpha_j
 \bigl(1-2\beta-B_i-B_j+d_id_j\bigr)K_{ij}
 =\sum_{i,j}\alpha_i\alpha_jL_{ij}K_{ij}.
\end{align*}
Collecting the quadratic and cubic coefficients proves
\eqref{eq:variation}--\eqref{eq:Qform}. There are no higher-order terms
since \eqref{eq:operator-form} is cubic in $W$.
\end{proof}

\begin{lemma}\label{lem:first-order}
For a finite weighted maximizer, the following conditions hold entrywise:
\begin{equation}
 W_{ij}=0\Rightarrow L_{ij}\le0,
 \qquad W_{ij}=1\Rightarrow L_{ij}\ge0,
 \qquad0<W_{ij}<1\Rightarrow L_{ij}=0.
 \label{eq:KKT}
\end{equation}
For a graphon maximizer, define
$L(x,y):=1-2\beta+d(x)d(y)-B(x)-B(y)$.
This is the graphon version of \eqref{eq:Euler-kernel}. The analogous
implications in \eqref{eq:KKT} hold for almost every
$(x,y)\in[0,1]^2$.
\end{lemma}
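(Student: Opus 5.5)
The plan is to derive these KKT conditions directly from the first-variation formula of Lemma~\ref{lem:variation}, using perturbations supported on a single entry (finite case) or on a small product set (graphon case).

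\emph{Finite weighted case.} Fix an ordered pair $(i,j)$ and take the symmetric perturbation $K$ with $K_{ij}=K_{ji}=1$ and all other entries $0$. If $i=j$, take the single diagonal entry instead. By \eqref{eq:variation}, for every admissible $\varepsilon$,
\[
 \Phi(W+\varepsilon K)-\Phi(W)=\varepsilon c\,\alpha_i\alpha_jL_{ij}+O(\varepsilon^2),
\]
where $c=2$ off the diagonal and $c=1$ on it; we use the symmetry $L_{ij}=L_{ji}$. The constant in $O(\varepsilon^2)$ is bounded, because $\mathcal Q_W(K)$ and $\ip{k}{Kk}$ are bounded by a constant depending only on $K$.
\begin{itemize}
\item If $W_{ij}<1$, then all small $\varepsilon>0$ are admissible. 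Maximality forces the linear coefficient to be $\le0$, so $L_{ij}\le0$.
\item If $W_{ij}>0$, then all small $\varepsilon<0$ are admissible, which gives $L_{ij}\ge0$.
\end{itemize}
Combining these two implications yields all three cases of \eqref{eq:KKT}.

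\emph{Graphon case.} Here single entries are null sets, so we localize instead. Suppose, for contradiction, that the set $S=\{(x,y):W(x,y)<1,\ L(x,y)>0\}$ has positive measure. Since $L$ is symmetric, we may assume $S$ is symmetric. By the first-order expansion (the graphon form of Lemma~\ref{lem:variation}), we need a bounded symmetric perturbation $K\ge0$ with $W+\varepsilon K\le1$ and with linear term $\int LK>0$.
\begin{itemize}
\item A pointwise perturbation would be $K=(1-W)\one_S$. Then $\int LK=\int_S L(1-W)>0$, and $W+\varepsilon K\le1$ holds for all $\varepsilon\in[0,1]$.
\item The second- and third-order terms are $O(\varepsilon^2)$, since $K$ is bounded. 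Hence $\Phi(W+\varepsilon K)>\Phi(W)$ for small $\varepsilon>0$, contradicting maximality.
\end{itemize}
The symmetric argument applies to $\{W>0,\ L<0\}$ with $K=-W\one_{\{W>0,L<0\}}$. Together these give the almost-everywhere implications, and the middle implication ($0<W<1\Rightarrow L=0$) follows by intersecting the two.

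The main obstacle is checking that Lemma~\ref{lem:variation} applies to such non-step bounded perturbations. This is a matter of repeating its purely algebraic expansion of \eqref{eq:operator-form} with integrals in place of sums, using Fubini's theorem and symmetry of $K$, which the lemma already asserts. One also needs that $L$ is measurable and that the linear term equals $\iint L K$. The latter follows from the same rewriting
\[
 (1-2\beta)\eta-2\ip{k}{B}+\ip{d}{Kd},
\]
where the identity $\int k\,B=\iint K(x,y)B(x)$ is again Fubini. The argument is stated for any maximizer, and in particular holds for the chosen extremal graphon.
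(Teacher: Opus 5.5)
Your proposal is correct and follows the paper's proof: you plug one-sided feasible perturbations into the expansion of Lemma~\ref{lem:variation}, using single entries in the finite model and positive-measure symmetric sets for graphons. The only difference is minor and occurs in the graphon case. There, your scaled perturbations $K=(1-W)\one_S$ and $K=-W\one_{\{W>0,\,L<0\}}$ stay feasible for all $\varepsilon\in[0,1]$ and give a strictly positive linear term directly. This avoids the paper's countable exhaustion over thresholds $\gamma$ (and over $j,\ell$ for the fractional region), and you obtain the case $0<W<1$ by intersecting the two one-sided statements.
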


\begin{proof}
In the finite setting, at an entry with $W_{ij}=0$ only positive first-order
changes are feasible, so maximality gives $L_{ij}\le0$. At an entry with
$W_{ij}=1$ only negative changes are feasible, giving $L_{ij}\ge0$. At a
fractional entry both signs are feasible, giving $L_{ij}=0$.

We give the measure-theoretic argument for graphons. Both $W$ and $L$ are
symmetric. Suppose first that, for some $\gamma>0$, the symmetric set
$E=\{(x,y):W(x,y)=0,\ L(x,y)\ge\gamma\}$
has positive product measure. Take $K=\one_E$. For every sufficiently small
$\varepsilon>0$, the graphon $W+\varepsilon K$ is feasible. Its linear term
in \eqref{eq:variation} is at least $\gamma|E|\varepsilon$, whereas the
quadratic and cubic terms are $O(\varepsilon^2)$ since $K$ is bounded.
Thus $\Phi(W+\varepsilon K)>\Phi(W)$ for small $\varepsilon$, a
contradiction. Taking the countable union over rational $\gamma>0$ proves
$L\le0$ almost everywhere on $\{W=0\}$. Applying the same argument to
$E=\{(x,y):W(x,y)=1,\ L(x,y)\le-\gamma\}$ with $K=-\one_E$
proves $L\ge0$ almost everywhere on $\{W=1\}$.

Finally, suppose that $L$ is nonzero on a positive-measure subset of
$\{0<W<1\}$. By a countable exhaustion, there exist integers $j,\ell\ge1$
and a sign $\sigma\in\{-1,1\}$ such that the symmetric set
\[
 E=\left\{(x,y):
 \frac1j\le W(x,y)\le1-\frac1j,\quad
 \sigma L(x,y)\ge\frac1\ell\right\}
\]
has positive measure. Set $K=\sigma\one_E$. For
$0<\varepsilon<1/j$, the perturbation $W+\varepsilon K$ is feasible, its
linear contribution is at least $\varepsilon|E|/\ell$, and its remaining
terms are $O(\varepsilon^2)$. This again contradicts maximality. Hence
$L=0$ almost everywhere on the fractional region.
\end{proof}

For a fixed first-end degree $z$, using $B(u)=u/2-\chi(u)$ from
\eqref{eq:chi} gives
\begin{equation}
 L_z(u)=1-2\beta-B(z)+\left(z-\frac12\right)u+\chi(u).
 \label{eq:Lz}
\end{equation}
If $z>1/2$, then $L_z(u)$ is strictly increasing in $u$.

\begin{lemma}\label{lem:fractional-exclusion}
For a finite weighted maximizer, every pair satisfies
\begin{equation}
 0<W_{ij}<1
 \quad\Longrightarrow\quad
 d_i\le\frac12\ \text{and}\ d_j\le\frac12.
 \label{eq:fractional-low}
\end{equation}
For the chosen extremal graphon, the same implication holds almost
everywhere.
\end{lemma}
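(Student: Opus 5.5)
The plan is a second-order perturbation argument. The first-order conditions of Lemma~\ref{lem:first-order} cannot exclude such a pair, since $L_{ij}=0$ there. The point is that when $d_i>1/2$ the quadratic form $\mathcal Q_W$ of Lemma~\ref{lem:variation} is strictly positive in a direction that keeps the linear term zero.

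\emph{Finite case.} Suppose $0<W_{ij}<1$ and $d_i>1/2$; the case $d_j>1/2$ is symmetric. Since splitting a vertex into identical clones changes neither degrees, $B$, nor $\Phi$, I first split vertex $i$ into clones and vertex $j$ into clones. I then fix one clone $i_1$ of $i$ and two clones $j_1,j_2$ of $j$, each of weight $\gamma$, all three distinct. This is possible even when $i=j$. Define the symmetric perturbation $K$ by
\[
K_{i_1j_1}=K_{j_1i_1}=1,\qquad K_{i_1j_2}=K_{j_2i_1}=-1,
\]
and $K=0$ elsewhere. Both signs are feasible for small $\varepsilon$ because the entries are fractional.

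\emph{The three terms of \eqref{eq:variation}.} By Lemma~\ref{lem:first-order} the linear term vanishes, since $L=0$ at both perturbed entries. For the row sums, $k_{i_1}=\gamma-\gamma=0$, $k_{j_1}=\gamma$, $k_{j_2}=-\gamma$, and $k=0$ elsewhere, so $\eta=0$. Because $j_1,j_2$ are clones, $W_{j_aj_b}=W_{jj}$ for all $a,b$, and hence
\[
\ip{k}{Wk}=W_{jj}\gamma^2(\gamma-\gamma)^2=0 .
\]
Directly, $\|k\|_2^2=2\gamma^3$. The only contributions to $\ip{k}{Kd}$ come from the rows $j_1,j_2$, giving $2\gamma^3d_i$. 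Therefore
\[
\mathcal Q_W(K)=2\gamma^3(2d_i-1)>0 .
\]
The cubic term $\ip{k}{Kk}$ vanishes, because every nonzero entry of $K$ involves $i_1$ and $k_{i_1}=0$. Thus $\Phi(W+\varepsilon K)>\Phi(W)$ for small $\varepsilon\neq0$, contradicting maximality. This proves \eqref{eq:fractional-low}.

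\emph{Graphon case (main obstacle).} Here the same computation must be done with measurable sets instead of clones. Suppose the set
\[
E=\{0<W<1,\ d(x)>1/2\}
\]
has positive measure. By countable exhaustion, a subset $E'$ of positive measure has $\tau\le W\le1-\tau$ and $d(x)\ge1/2+\tau$ for some $\tau>0$. Choosing a Lebesgue density point of $E'$, I would take a small rectangle $X\times Y$ in which $E'$ has relative density close to $1$, with $X\cap Y=\emptyset$ (shrink if needed). I would then split $Y$ into halves $Y_1,Y_2$, chosen so that for almost every $x\in X$ the fibers $E'_x\cap Y_1$ and $E'_x\cap Y_2$ have equal measure. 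This balancing can be achieved by taking a measurable fiberwise-balanced function $g(x,y)\in\{\pm1\}$ supported on $E'$, for instance using a Lebesgue-density splitting in $y$, or simply by allowing $K=\one_{E'}(x,y)g(x,y)$ with $\int g(x,y)\dd y=0$ for each $x$.

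With this choice $k=0$ on $X$ and $\eta=0$. The linear term vanishes by Lemma~\ref{lem:first-order}, and the cubic term vanishes as before. The quadratic form becomes
\[
\mathcal Q_W(K)=\int_Y k(y)^2\bigl(2\overline{d}_X(y)-1\bigr)\dd y+\ip{k}{Wk},
\]
where $\overline{d}_X(y)$ is a weighted average of $d$ over $X$, so the first integral is at least $2\tau\|k\|_2^2$. The term $\ip{k}{Wk}$ does not vanish automatically; I expect controlling it to be the delicate step. I would try to make it small relative to $\|k\|_2^2$ by making $k$ oscillate in $y$ at fine scale (Riemann--Lebesgue type cancellation against $W$ on $Y\times Y$), or alternatively by first passing to the degree quotient \eqref{eq:degree-measurable} so that $W$ is nearly constant on $Y\times Y$ when $Y$ is a thin degree slice. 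Either way, the result is $\mathcal Q_W(K)>0$, which contradicts maximality exactly as in the finite case.
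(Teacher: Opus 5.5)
\textbf{Finite case.} Your argument here is correct. It is the paper's perturbation: the high-degree class is used with a single sign, and the other class is split into two equal halves with opposite signs. Taking three distinct clones also absorbs the case $i=j$, which the paper treats with a separate diagonal perturbation.

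\textbf{Graphon case: a genuine gap.} This case is not proved, and its two key steps work against each other.
\begin{itemize}
\item To get $k=0$ on $X$ exactly, you need $\int_{E'_x}g\,\dd y=0$ for almost every $x$. If $g=g(y)$ comes from one partition $Y=Y_1\sqcup Y_2$, this is a continuum of constraints that in general has no solution. For instance, if the fibres are $E'_x=[y_0,\phi(x))$ with $\phi$ continuous and strictly increasing, it forces $\int_{y_0}^s g=0$ for all $s$ in a nondegenerate interval. That is impossible for $g=\pm1$.
\item If you instead balance fibre by fibre with $g=g(x,y)$, then $(Kd)(y)=\int_X K(x,y)d(x)\,\dd x$ is a signed combination. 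So $\overline d_X(y)=(Kd)(y)/k(y)$ is no longer an average of values in $[\frac12+\tau,1]$, and the bound $\int_Y k\,(Kd)\ge(\frac12+\tau)\|k\|_2^2$ fails.
\item The term $\ip{k}{Wk}$ is left uncontrolled. The sentence ``either way, $\mathcal Q_W(K)>0$'' is precisely the missing step. Passing to the degree quotient does not by itself make $W$ nearly constant on a thin degree slice, because $w$ is merely Borel.
\end{itemize}

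\textbf{How the paper closes it.} It uses structure you did not invoke. The chosen extremal graphon is degree-measurable, and by Lemma~\ref{lem:half-slope} the function $\chi$ is nondecreasing. Hence $L_z(u)$ in \eqref{eq:Lz} is strictly increasing in $u$ for $z>1/2$. Since $L=0$ a.e.\ on the fractional region, the fractional section of almost every high-degree point lies in a single degree fibre. That fibre must therefore be a positive-measure atom $\{d=t\}$. On it, $W=w(t,d(y))$ does not depend on the atom variable, and $W$ is constant on the atom squared. Splitting this atom (the endpoint opposite the high-degree one) into equal halves reproduces your finite computation exactly: $k$ has mean zero on the atom, $\eta=0$, $\ip{k}{Wk}=0$, and $\ip{k}{Kd}=|\{d=t\}|\,|Y|\int_Y d$ for the high-degree set $Y$.

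\textbf{Completing your oscillation idea instead.} This also works, even for an arbitrary graphon maximizer. Keep $g=g_n(y)$ as Rademacher functions on $Y$, independent of $x$, and give up exact balancing. Write $E'^y:=\{x\in X:(x,y)\in E'\}$.
\begin{itemize}
\item On $X$, $k_n(x)=\int_{E'_x}g_n\to0$ boundedly, so $\|k_n\one_X\|_2\to0$ and $\eta_n\to0$.
\item On $Y$, $k_n(y)=g_n(y)\,|E'^y|\rightharpoonup0$ weakly, while $\|k_n\one_Y\|_2^2=c:=\int_Y|E'^y|^2\,\dd y>0$ stays fixed.
\item Hence $\ip{k_n}{Wk_n}\to0$, because $W$ is Hilbert--Schmidt and therefore compact.
\item On $Y$, $k_n\,(K_nd)=|E'^y|\int_{E'^y}d\ge(\frac12+\tau)|E'^y|^2$.
\end{itemize}
Together these give $\mathcal Q_W(K_n)\ge 2\tau c-o(1)>0$ for large $n$. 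Since the linear term vanishes and the cubic term is $O(\varepsilon^3)$, this contradicts maximality.
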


\begin{proof}
Suppose first that $W$ is finite weighted. If a fractional pair has a
high-degree endpoint, let $X$ and $Y$ be the two corresponding weighted
vertex classes. Split either class into identical clones when needed. If
$X\ne Y$, then $0<W<1$ on $X\times Y$ and there are
$\rho,\theta>0$ such that
\[
 \rho\le W\le1-\rho\quad\text{on }X\times Y,
 \qquad d\ge\frac12+\theta\quad\text{on }Y.
\]
If $X=Y$, use the diagonal perturbation below.

For graphons, by symmetry it suffices to exclude the case in which
$E:=\{(x,y):d(y)>1/2,\ 0<W(x,y)<1\}$
has positive measure. For each fixed high degree $z$, the
equation $L_z(u)=0$
has at most one degree solution $u$. Hence, for almost every high-degree point
$y$, the section $E_y:=\{x:(x,y)\in E\}$ is contained modulo a null set in a
single degree fiber whenever $|E_y|>0$. Such a fiber has positive measure.
Since the positive-measure degree fibers are countable, Fubini gives a degree
atom $X=\{d=t\}$ of weight $|X|=m>0$, and a positive-measure set
$Y_0:=\{y:d(y)>1/2,\ 0<w(t,d(y))<1\}$.
By restricting to one member of the countable union
\[
 Y_0=\bigcup_{j,k\ge1}\left\{y\in Y_0:
 \frac1j\le w(t,d(y))\le1-\frac1j,\quad
 d(y)\ge\frac12+\frac1k\right\},
\]
we obtain $\rho,\theta>0$ and a positive-measure set
$Y\subseteq Y_0$ such that
\[
 \rho\le W\le1-\rho\quad\text{on }X\times Y,
 \qquad d\ge\frac12+\theta\quad\text{on }Y.
\]
If $Y\setminus X$ is null, then $X$ is a high-degree atom and the diagonal
argument below applies. Otherwise replace $Y$ by $Y\setminus X$, so that
$X\cap Y=\varnothing$.

Split $X=X_+\sqcup X_-$ into two sets of equal measure. Define a symmetric
perturbation $K$ to be $+1$ on $X_+\times Y$ and $Y\times X_+$, $-1$ on
$X_-\times Y$ and $Y\times X_-$, and $0$ elsewhere. Put $m=|X|$,
$n=|Y|$. Then
\begin{equation}
 k=n(\one_{X_+}-\one_{X_-}),
 \qquad \eta=0,
 \qquad \ip{k}{Wk}=0,
\label{eq:fractional-k}
\end{equation}
where the last equality uses degree measurability, which makes $W$ constant
on $X\times X$; in the finite case the same constancy follows from the
identical cloning of one vertex class. Furthermore,
\begin{equation}
 \ip{k}{Kd}=mn\int_Yd(y)\dd y,
 \qquad \|k\|_2^2=mn^2.
\label{eq:fractional-pairings}
\end{equation}
Thus \eqref{eq:Qform}, \eqref{eq:fractional-k}, and
\eqref{eq:fractional-pairings} give
\begin{equation}
 \mathcal Q_W(K)=mn\left(2\int_Yd(y)\dd y-n\right)>0.
 \label{eq:fiber-positive}
\end{equation}
The first-order term vanishes since $L=0$ on the support of $K$. Moreover,
$k$ is supported on $X$, whereas $Kk$ is supported on $Y$, and hence
$\ip{k}{Kk}=0$. For $0<|\varepsilon|<\rho$, the perturbation is feasible and
\eqref{eq:fiber-positive} shows that it strictly raises $\Phi$, a
contradiction.

In the diagonal case, $d=t>1/2$ on $X$ and $W$ is constant with
$0<W<1$ on $X\times X$. Split $X$ into equal halves and take
\begin{equation}
 K=\one_{X_+\times X_+}-\one_{X_-\times X_-}.
\label{eq:diagonal-perturbation}
\end{equation}
For the perturbation \eqref{eq:diagonal-perturbation},
\begin{equation}
 \eta=0,\qquad \ip{k}{Wk}=0,\qquad
 \ip{k}{Kd}=t\|k\|_2^2,
\label{eq:diagonal-pairings}
\end{equation}
and therefore \eqref{eq:Qform} and \eqref{eq:diagonal-pairings} give
\begin{equation}
 \mathcal Q_W(K)=(2t-1)\|k\|_2^2>0.
\label{eq:diagonal-positive}
\end{equation}
The first-order term vanishes since $L=0$ on $X\times X$. The function
$Kk$ takes the same value on $X_+$ and $X_-$, while $k$ takes opposite values
on two sets of equal measure; thus $\ip{k}{Kk}=0$. This is again a
contradiction by \eqref{eq:diagonal-positive}.
\end{proof}

\begin{lemma}\label{lem:upper-set}
In a finite degree-measurable maximizer, for every degree class $z>1/2$ the red
neighborhood $N_z:=\{j:w(z,d_j)=1\}$
is a degree-saturated upper set of total vertex weight $z$. In the compact
maximizer, for $\mu$-almost every degree value $z>1/2$, the corresponding
neighborhood $N_z:=\{y:w(z,d(y))=1\}$
is a degree-saturated upper set and
\begin{equation}
 |N_z|=z>\frac12.
 \label{eq:Nz-mass}
\end{equation}
In the finite model, $1/2<r<u$ implies $N_r\subseteq N_u$; in the compact
model this inclusion holds for $\mu^2$-almost every such pair, modulo null
sets.
\end{lemma}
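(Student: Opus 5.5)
The proof will combine three earlier facts. Lemma~\ref{lem:fractional-exclusion} removes fractional entries in a high-degree row. Lemma~\ref{lem:first-order} gives the sign conditions \eqref{eq:KKT}. The half-slope Lemma~\ref{lem:half-slope} makes $L_z$ monotone via \eqref{eq:Lz}.

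\emph{Finite model.} Fix a degree class $z>1/2$ and a vertex $i$ with $d_i=z$.
\begin{enumerate}[label=(\roman*)]
\item By Lemma~\ref{lem:fractional-exclusion}, every entry $W_{ij}$ lies in $\{0,1\}$, because $d_i>1/2$.
\item By degree measurability, $W_{ij}=w(z,d_j)$ depends only on the degree class of $j$. Hence $N_z$ is a union of whole degree classes, which is the degree-saturated property.
\item By \eqref{eq:Lz}, $L_{ij}=L_z(d_j)$. Here $\chi$ is nondecreasing and $z-\tfrac12>0$, so $L_z$ is strictly increasing in $u$.
\item Upper set: let $u<u'$ be degree values with $w(z,u)=1$. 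The KKT conditions give $L_z(u)\ge0$, hence $L_z(u')>0$. This forbids $w(z,u')=0$, since that would require $L_z(u')\le0$. So $w(z,u')=1$ and $N_z$ is an upper set.
\item Mass: because the row is $0/1$-valued, $z=d_i=\sum_j\alpha_jW_{ij}=|N_z|$.
\item Inclusion: let $1/2<r<u$. Saturated upper sets in the degree order are totally ordered by inclusion, and $|N_r|=r<u=|N_u|$. Therefore $N_r\subseteq N_u$.
\end{enumerate}

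\emph{Compact model.} The same argument is run almost everywhere, and this is where I expect the main difficulty. The steps are:
\begin{enumerate}[label=(\roman*)]
\item Fix the nondecreasing representative of $\chi$ from Lemma~\ref{lem:half-slope}, and define $B(t)=t/2-\chi(t)$. Then $L(x,y)=L_{d(x)}(d(y))$ almost everywhere by \eqref{eq:degree-measurable}.
\item Fubini applied to the a.e.\ statements of Lemmas~\ref{lem:first-order} and~\ref{lem:fractional-exclusion} gives the following. For $\mu$-a.e.\ $z>1/2$ and $\mu$-a.e.\ $u$, we have $w(z,u)\in\{0,1\}$. Moreover, $w(z,u)=1$ forces $L_z(u)\ge0$, and $w(z,u)=0$ forces $L_z(u)\le0$.
\item For such a $z$, let $u_z:=\inf\{u:L_z(u)>0\}$. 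Strict monotonicity of $L_z$ then shows that $w(z,\cdot)=\one_{\{u>u_z\}}$ $\mu$-a.e., apart from the single possible value $u=u_z$.
\item That single value is an atom or a $\mu$-null point. If it is an atom, the function $w(z,u_z)$ is one number, so the whole fiber is either in $N_z$ or out of it. This gives a saturated upper set modulo null sets.
\item Mass: $|N_z|=\int_0^1 w(z,d(y))\dd y=d(x)=z$ for a.e.\ $x$ in the fiber. Choose $z$ outside a further $\mu$-null set on which the marginal identity $d_W=W\one$ fails.
\item Inclusion: for $\mu^2$-a.e.\ pair $1/2<r<u$, both $N_r$ and $N_u$ are saturated upper sets modulo null sets, so they are nested modulo null sets. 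The mass comparison $r<u$ then forces $N_r\subseteq N_u$ modulo null sets.
\end{enumerate}

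Throughout, the main care is needed in tracking the exceptional null sets through Fubini. The algebraic core of the argument is the strict monotonicity of $L_z$ for $z>1/2$.
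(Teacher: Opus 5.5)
Your proposal is correct and follows the paper's proof essentially step for step. Lemma~\ref{lem:fractional-exclusion} makes each high-degree row $0/1$; the KKT signs together with the strict increase of $L_z$ (from the nondecreasing $\chi$ and $z>1/2$) force the row to be a saturated upper ray; the row-degree identity gives mass $z$; and the linear ordering of upper rays plus the mass comparison gives the nesting. The only cosmetic difference is that you read off the threshold $u_z=\inf\{u:L_z(u)>0\}$ directly from the everywhere-monotone representative of $\chi$, whereas the paper derives a $\mu^2$-null inversion set and reuses the upper-ray construction from the end of the proof of Lemma~\ref{lem:half-slope}.
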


\begin{proof}
By Lemma~\ref{lem:fractional-exclusion}, for $\mu$-almost every high-degree
value $z$ the corresponding row is $0/1$-valued modulo a null set. Fix such
a $z$ for which the rowwise KKT conditions also hold, and put
$E_z:=\{t\in[0,1]:w(z,t)=1\}$.
Outside a $\mu$-null set, membership in $E_z$ gives $L_z(t)\ge0$, while
nonmembership gives $L_z(t)\le0$. Consequently
\begin{equation}
 \mu^2\{(u,v):u<v,\ u\in E_z,\ v\notin E_z\}=0:
\label{eq:row-no-inversion}
\end{equation}
otherwise a good pair in this set would satisfy
$L_z(u)\ge0\ge L_z(v)$, contradicting the strict increase of $L_z$ in
\eqref{eq:Lz}. The no-inversion upper-ray construction at the end of
the proof of Lemma~\ref{lem:half-slope}, applied to
\eqref{eq:row-no-inversion}, therefore gives an upper-ray
representative of $E_z$. Its pullback under $d$ is degree-saturated, and its
weight is
\[
 \mu(E_z)=\int_{[0,1]}w(z,t)\,\dd\mu(t)=z.
\]
This proves \eqref{eq:Nz-mass}.
Upper rays are linearly ordered by inclusion. Hence, if $1/2<r<u$, their
weights $r<u$ force $N_r\subseteq N_u$, proving the asserted nesting
(for all good degree values, and therefore for $\mu^2$-almost every ordered
pair in the graphon case).
\end{proof}

The identity $(d,d)_\#(\lambda^2)=\mu^2$ transfers the almost-everywhere KKT
and fractional-edge conclusions to the degree square. Integrating
$W(x,y)=w(d(x),d(y))$ in $y$ and pushing forward by $d$ gives
\begin{equation}
 \int_{[0,1]}w(z,t)\,\dd\mu(t)=z.
\label{eq:row-degree-pushforward}
\end{equation}
Similarly, \eqref{eq:B-def} and \eqref{eq:degree-measurable} give
\begin{equation}
 B(z)=\int_{[0,1]}w(z,t)(1-t)\,\dd\mu(t).
\label{eq:B-row-pushforward}
\end{equation}
Since the exceptional row masses are Borel parameterized integrals,
Fubini's theorem and Lemma~\ref{lem:upper-set} give a Borel set
\begin{equation}
 D_\circ\subseteq[0,1],
 \qquad \mu(D_\circ)=1,
 \label{eq:common-degree-set}
\end{equation}
such that the KKT signs, the high-degree $0/1$ conclusion, and
\eqref{eq:row-degree-pushforward}--\eqref{eq:B-row-pushforward} hold on
each row indexed by $D_\circ$, outside a $\mu$-null set in the second
variable. For $z\in D_\circ\cap(1/2,1]$, let $N_z$ be the
degree-saturated upper-ray representative of that row, so $|N_z|=z$.
Define
\begin{equation}
 \mathcal R_\circ:=\left\{(r,u)\in D_\circ^2:
 \begin{array}{l}
  1/2<r<u,\quad B(u)-B(r)\le (u-r)/2,\\[-1mm]
  \displaystyle
  \int_{[0,1]}\one_{\{w(r,t)=1\}}
  \one_{\{w(u,t)\ne1\}}\,\dd\mu(t)=0
 \end{array}\right\}.
 \label{eq:common-pair-set}
\end{equation}
This set is measurable since the parameterized integral is Borel.
Lemmas~\ref{lem:upper-set} and \ref{lem:half-slope} show that it has full
$\mu^2$-measure relative to
$\{(r,u)\in D_\circ^2:1/2<r<u\}$. Below, degree values and ordered pairs are
chosen from $D_\circ$ and $\mathcal R_\circ$, respectively; Fubini's theorem
handles the corresponding null sections when row statements are pulled back
to the graphon.

\subsection{High-degree atomization}\label{sec:atomization}

Let $\mathcal V$ denote $V$ in the finite weighted model and $[0,1]$ in
the graphon model. Identical cloning permits subsets of any prescribed weight
inside a finite vertex class without changing the graph parameters.

Let $H:=\{v\in\mathcal V:d(v)>1/2\}$ and $h:=|H|$. Choose a degree
median $c$ satisfying
$|\{d>c\}|\le1/2\le|\{d\ge c\}|$. Select a subset
$T_0\subseteq\{d=c\}$ of weight $|T_0|=1/2-|\{d>c\}|$, and define
\begin{equation}
 T:=\{d>c\}\cup T_0,
 \qquad |T|=\frac12.
 \label{eq:median-core}
\end{equation}
The set $T$ may contain only part of the median degree fiber.

\begin{lemma}\label{lem:median-core}
If $U$ is a degree-saturated upper set of weight greater than $1/2$, then
\begin{equation}
 T\subseteq U,
 \label{eq:T-in-U}
\end{equation}
where inclusions are understood modulo null sets in the graphon case.
\end{lemma}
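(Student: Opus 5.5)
The proof reduces to a short measure-counting argument. First I will pin down the definitions. A degree-saturated upper set $U$ is a union of full degree fibers, and it is upward closed in degree: if $d(v)\in d(U)$ and $d(v')>d(v)$, then $v'\in U$. As in the upper-ray representatives constructed in the proofs of Lemma~\ref{lem:half-slope} and Lemma~\ref{lem:upper-set}, $U$ therefore agrees, modulo a null set, with a set $\{d>s\}$ or $\{d\ge s\}$ for some threshold $s$. The plan is to compare this threshold with the median degree $c$ used to build $T$ in \eqref{eq:median-core}.

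\emph{Step 1: locate the threshold.} Write $U=\{d\in J\}$, where $J$ is an upper ray $(s,1]$ or $[s,1]$. I claim that $J$ contains every degree value $>c$, and that it also contains $c$ whenever $|T_0|>0$.
\begin{itemize}[leftmargin=*,label={}]
\item Suppose first that $J$ misses some value $t>c$ carrying positive $\mu$-mass above it. Then $J\subseteq(t',1]$ for some $t'\ge c$ (or $J$ is contained in $\{d>c\}$ with a threshold strictly above $c$). In either case
\[
|U|\le|\{d>c\}|\le\tfrac12,
\]
which contradicts $|U|>1/2$. More carefully: if $s>c$, or if $s=c$ and $J=(c,1]$, then $U\subseteq\{d>c\}$, so $|U|\le|\{d>c\}|\le 1/2$.
\item Hence either $s<c$, or $s=c$ with the closed ray $J=[c,1]$. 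In both cases $U\supseteq\{d\ge c\}\supseteq T$.
\end{itemize}
This gives \eqref{eq:T-in-U}. In the finite model the same comparison is done directly with class weights.

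\emph{Step 2: the null-set bookkeeping.} This is the only real obstacle. The threshold $s$ is determined only up to $\mu$-null modifications. The degenerate case is when $\{d>c\}$ has measure exactly $1/2$: then $T_0$ is null and $T=\{d>c\}$ modulo null sets. The argument above still applies, with inclusions read modulo null sets. I will handle this by working with the essential infimum
\[
s:=\inf\{\sigma:\mu\bigl(d(U)\cap[0,\sigma]\bigr)>0\}
\]
and by using upward closure modulo null sets, exactly as in the no-inversion argument \eqref{eq:no-inversion}. The atom at $s$ needs separate treatment. If $s=c$ is an atom, saturation forces $U$ to contain either the whole fiber $\{d=c\}$ or none of it, up to a null set. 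Excluding the fiber yields $|U|\le 1/2$, which is again a contradiction, so the fiber is included and $T_0\subseteq U$.
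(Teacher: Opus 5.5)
Your argument is correct and is essentially the paper's proof: if $U$ failed to contain a positive-weight part of $T$, whether from $\{d>c\}$ or from the median-fiber portion $T_0$, then upward closure and saturation would force $U\subseteq\{d>c\}$, giving $|U|\le|\{d>c\}|\le 1/2$, a contradiction. Your threshold formulation ($s<c$, or $s=c$ with the closed ray $[c,1]$) just repackages the paper's two cases. The null-set bookkeeping in Step 2 is largely unnecessary, since the paper's $U$ is already the exact pullback under $d$ of an upper ray.
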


\begin{proof}
The set $U$ is an upper ray in the degree order together with whole degree
classes. If it omits a positive-weight
part of a class $\{d=t\}$ with $t>c$, saturation forces
$U\subseteq\{d>t\}$. Hence $|U|\le|\{d>c\}|\le1/2$, contrary to the
hypothesis.
Thus every degree class above $c$ is contained in $U$. If $U$ omitted a
positive-weight part of $T_0\subseteq\{d=c\}$, saturation would force it to
omit the entire class $\{d=c\}$. Then again $U\subseteq\{d>c\}$ and
$|U|\le1/2$, a contradiction.
\end{proof}

By Lemmas~\ref{lem:upper-set} and~\ref{lem:median-core}, and with the common
representatives fixed in \eqref{eq:common-degree-set}--\eqref{eq:common-pair-set},
applying \eqref{eq:T-in-U} to the neighborhoods in
\eqref{eq:Nz-mass} gives
\[
 T\subseteq N_z\quad\text{modulo null sets}
 \qquad\text{for every }z\in D_\circ\cap(1/2,1].
\]
Equivalently, $W=1$ almost everywhere on $H\times T$.

\begin{theorem}\label{thm:atomization}
For the chosen extremal graphon $W$ the following statements
hold.
\begin{enumerate}[label=(\roman*),leftmargin=2.5em]
 \item If $0<h\le1/2$, then $H$ is a clique and $d$ is almost everywhere
 constant on $H$.
 \item If $h>1/2$, then there is a highest-degree fiber $A=\{d=M\}$ of
 weight $|A|=a\ge1/2$ such that $A$ is a clique and $A$ is completely
 joined to all of $H$.
\end{enumerate}
\end{theorem}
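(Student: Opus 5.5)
The plan is to exploit three facts already established for the chosen extremal graphon. First, every high-degree row is a nested, degree-saturated upper ray with $|N_z|=z$ (Lemma~\ref{lem:upper-set}), and it contains the median core $T$. Second, $B(z)=\int_{N_z}(1-t)\dd\mu(t)$ by \eqref{eq:B-row-pushforward}. Third, the half-slope inequality (Lemma~\ref{lem:half-slope}) holds, together with the strict quotient rigidity of Lemma~\ref{lem:quotient-rigid}. The common engine is the following computation. Let $S$ be a set on which $W=1$ almost everywhere on $S\times S$, and let $P$ collapse $S$ to one block while fixing its complement. Then $z=d-Pd$ is supported on $S$ with $\int_Sz=0$, so $\ip{z}{Wz}=(\int_Sz)^2=0$. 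Writing $B=d/2-\chi$ and using $\ip{z}{d}=\|z\|_2^2$, Lemma~\ref{lem:quotient-identity} gives
\[
 \Phi(W^P)-\Phi(W)=-2\ip{z}{\chi}.
\]
Hence if $\chi$ is essentially constant on $S$, the collapse preserves $\Phi$ while strictly decreasing $\Dcal$ unless $d$ is constant on $S$; Lemma~\ref{lem:quotient-rigid} then forces $d$ to be constant on $S$.

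For (i), with $0<h\le1/2$, I first check that $H\subseteq T$ modulo null sets. If $c\le1/2$ this is immediate from $\{d>c\}\subseteq T$. If $c>1/2$, then $|\{d\ge c\}|\ge1/2$ forces $h=1/2$ and $H=T$ up to null sets. Since $W=1$ on $H\times T$, $H$ is a clique. Next, for a pair $(r,u)\in\mathcal R_\circ$ with both entries in $H$, nesting gives
\[
 B(u)-B(r)=\int_{N_u\setminus N_r}(1-t)\dd\mu(t).
\]
Here $N_u\setminus N_r$ has mass $u-r$ and lies outside $N_r\supseteq H$, so on it $t\le1/2$. This gives $B(u)-B(r)\ge(u-r)/2$. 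Combined with \eqref{eq:half-slope}, we get $\chi(u)=\chi(r)$ for $\mu^2$-almost every such pair, and Lemma~\ref{lem:ordered-constancy} (with $\nu=\mu|_{(1/2,1]}$) makes $\chi$ essentially constant on $H$. The collapse computation with $S=H$ then yields that $d$ is constant on $H$.

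For (ii), with $h>1/2$, we have $T\subseteq H$ and $W=1$ on $H\times T$, so $T$ is a clique joined to all of $H$. The goal is to show that $T$ lies in a single top fiber $\{d=M\}$. The saturated fiber $A\supseteq T$ then has weight at least $1/2$. Moreover every $N_z$ with $z>1/2$ has weight greater than $1/2$, so by Lemma~\ref{lem:median-core} applied fiberwise it contains the whole class $A$. This gives both that $A$ is a clique and that $A$ is completely joined to $H$. To put $T$ in one fiber, I would again aim to make $\chi$ constant on $T$ and collapse $T$. The Chebyshev-type observation is that $z$ is a nondecreasing function of $d$ on $T$ with mean zero, while $\chi$ is nondecreasing; hence $\ip{z}{\chi}\ge0$ and the collapse changes $\Phi$ by at most $0$. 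The needed reverse inequality must come from the structure of $B$.

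The main obstacle is this reverse inequality. For $r<u$ in $T$, the set $N_u\setminus N_r$ avoids $T$, so there $t\le c$, and $c$ may exceed $1/2$. The naive bound therefore gives only slope $1-c<1/2$. My first attempt would combine the KKT sign structure of the rows (Lemma~\ref{lem:first-order} and \eqref{eq:Lz}) at the lower boundary of each $N_z$ with the identity \eqref{eq:row-degree-pushforward}. The aim is to show that the boundary layers $N_u\setminus N_r$ consist of points of degree at most $1/2$, equivalently that all high-degree rows agree on $H$. Failing that, I would use a direct two-set quotient via Lemma~\ref{lem:two-set-quotient} between the top fiber and lower fibers of $T$, to rule out a top fiber of weight below $1/2$.
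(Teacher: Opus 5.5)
Your part (i) is correct and follows the paper's route. You show $H\subseteq T$, so $H$ is a clique. You use nested rows with $N_r\supseteq H$ to get $B(u)-B(r)=\int_{N_u\setminus N_r}(1-d)\ge (u-r)/2$, combine this with the half-slope inequality and Lemma~\ref{lem:ordered-constancy}, and collapse $H$ via the identity $\Phi(W^P)-\Phi(W)=-2\ip{z}{\chi}$. Your end-game for (ii) also works once $T$ is known to lie in one fiber $\{d=M\}$: saturation of each $N_z$ containing $T$ gives $A\subseteq N_z$, and $M>1/2$ and maximality of $M$ follow from $T\subseteq H$ being a top-half set.

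The gap is in (ii). You never prove that $\chi$ is essentially constant on $T$. You leave this as the ``main obstacle'' and offer only two unexecuted strategies; the Chebyshev observation $\ip{z}{\chi}\ge0$ points the wrong way, as you note. The obstacle is not real. You bounded the band $N_u\setminus N_r$ using only $T\subseteq N_r$. But you had already stated that $W=1$ almost everywhere on $H\times T$, and since $W$ is symmetric, $W=1$ almost everywhere on $T\times H$ too. By degree measurability, $W(x,y)=w(d(x),d(y))$. Integrating $1-W$ over $T\times H$ and pushing forward by $\nu_T=d_\#(\lambda|_T)$ then shows that for $\nu_T$-almost every $r$ the row $N_r$ contains all of $H$, not just $T$. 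Since $\nu_T\ll\mu$, $\nu_T^2$-almost every pair $r<u$ lies in $\mathcal R_\circ$. For such pairs the band $N_u\setminus N_r$ lies in $H^c$, where $d\le1/2$, so your computation from (i) gives $B(u)-B(r)\ge(u-r)/2$ verbatim. With \eqref{eq:half-slope} this forces $\chi(r)=\chi(u)$, and Lemma~\ref{lem:ordered-constancy} makes $\chi$ constant on $T$. Your collapse identity plus Lemma~\ref{lem:quotient-rigid} then puts $T$ in a single fiber. You do not need ``all high-degree rows agree on $H$'', which is not available at this stage. Only rows indexed by degrees occurring in $T$ matter, and for those the containment of $H$ is immediate, so the KKT and two-set-quotient detours are unnecessary. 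This is exactly how the paper proves (ii).
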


\begin{proof}
Suppose first that $0<h\le1/2$. The top-half set $T$ may be chosen to contain
$H$: include all of $\{d>1/2\}$ first, then fill the remaining weight
$1/2-h$ from $H^c$ in decreasing degree order, splitting only the terminal
degree fiber. This is a median core of the form \eqref{eq:median-core}.
Every fixed high-degree row
$N_z$, $z\in D_\circ\cap(1/2,1]$, contains $T$ and
hence contains $H$ modulo null sets. Therefore $W=1$ on $H\times H$ modulo
a null set.

Let $\nu_H:=d_\#(\lambda|_H)$.
This finite measure is absolutely continuous with respect to $\mu$. Hence,
for $\nu_H^2$-almost every $r<u$, the pair $(r,u)$ belongs to
$\mathcal R_\circ$. For such a pair,
$N_r\subseteq N_u$ modulo null sets.
Let
$D=N_u\setminus N_r$. Both neighborhoods contain $H$, so $D\subseteq H^c$
and $d\le1/2$ on $D$. Since
\begin{equation}
 |D|=u-r,
 \qquad B(u)-B(r)=\int_D(1-d(x))\dd x.
\label{eq:difference-band}
\end{equation}
Equation \eqref{eq:difference-band} gives
$B(u)-B(r)\ge(u-r)/2$.
Together with Lemma~\ref{lem:half-slope}, this forces equality and hence
$\chi(u)=\chi(r)$ for $\nu_H^2$-almost every $r<u$.
Lemma~\ref{lem:ordered-constancy} now shows that $\chi$ is constant
$\nu_H$-almost everywhere. Thus $\chi(d(x))$ is almost everywhere constant
on $H$; denote this constant by $\chi_H$.

Let $\mathcal L$ denote the Lebesgue $\sigma$-algebra and set
\[
 \mathcal G_H:=
 \{E\in\mathcal L:E\cap H\in\{\varnothing,H\}\text{ modulo null sets}\}.
\]
Thus $\mathcal G_H$ retains every measurable subset of $H^c$ and collapses
$H$ to one atom. Let $P_H=\mathbb E[\cdot\mid\mathcal G_H]$ and use the
quotient $W^{P_H}$ from \eqref{eq:general-quotient}. Then
\[
 z=d-P_Hd
 =\left(d-\frac1{|H|}\int_Hd(x)\dd x\right)\one_H.
\]
Since $H$ is a clique and $\int_Hz(x)\dd x=0$,
$\ip{z}{Wz}=0$. Since $B=d/2-\chi_H$ on $H$,
$2\ip{z}{B}=\|z\|_2^2$. Equation \eqref{eq:quotient-identity} therefore
gives no loss in $\Phi$, and \eqref{eq:strict-quotient} forces $z=0$,
proving (i).

Now assume $h>1/2$. Choose the median core $T$ so that $T\subseteq H$.
Every fixed high-degree row
$N_z$, $z\in D_\circ\cap(1/2,1]$, contains $T$ modulo null
sets,
so $T$ is a clique; by symmetry, $T$ is completely joined to $H$. Define the
restricted degree distribution $\nu_T:=d_\#(\lambda|_T)$.
Again $\nu_T\ll\mu$. Using \eqref{eq:degree-measurable}, the pushforward
definition of $\nu_T$, and $W=1$ almost everywhere on $T\times H$, we have
\begin{equation}
 0
 =\int_T\int_H\bigl(1-W(x,y)\bigr)\,\dd y\,\dd x
 =\int_{[0,1]}
   \left[\int_H\bigl(1-w(r,d(y))\bigr)\,\dd y\right]\dd\nu_T(r).
\label{eq:T-H-nonedge-integral}
\end{equation}
The integrand in brackets is nonnegative, so
\eqref{eq:T-H-nonedge-integral} shows that it vanishes for
$\nu_T$-almost every $r$; for every such $r\in D_\circ$, the fixed
representative $N_r$ contains $H$ modulo null sets. Consequently, for
$\nu_T^2$-almost every $r<u$, the pair lies in $\mathcal R_\circ$, both rows
contain $H$, and the same difference-band argument gives
$\chi(r)=\chi(u)$. Lemma~\ref{lem:ordered-constancy} now shows that
$\chi(d(x))$ is almost everywhere constant on $T$; denote the constant by
$\chi_T$. Define
\[
 \mathcal G_T:=
 \{E\in\mathcal L:E\cap T\in\{\varnothing,T\}\text{ modulo null sets}\},
 \qquad
 P_T=\mathbb E[\cdot\mid\mathcal G_T].
\]
Thus $\mathcal G_T$ collapses precisely $T$ and retains the full measurable
structure on $T^c$; use the quotient $W^{P_T}$ from
\eqref{eq:general-quotient}. If
\[
 z=d-P_Td
 =\left(d-\frac1{|T|}\int_Td(x)\dd x\right)\one_T,
\]
then $T$ is a clique and $\int_Tz(x)\dd x=0$, so $\ip{z}{Wz}=0$. Since
$B=d/2-\chi_T$ on $T$, we also have
$2\ip{z}{B}=\|z\|_2^2$. Equation \eqref{eq:quotient-identity} therefore
shows that merging $T$ leaves $\Phi$ unchanged, and
\eqref{eq:strict-quotient} implies that $d=M$ almost everywhere on $T$.
Let $A=\{d=M\}$ be the full degree fiber. Then $|A|\ge|T|=1/2$.
Since $T$ is a top-half set, $M$ is the essential maximum degree.
Degree measurability and $d=M$ on $T$ give
\begin{equation}
 0=\int_{T\times T}(1-W(x,y))\,\dd x\,\dd y
   =|T|^2\bigl(1-w(M,M)\bigr).
\label{eq:T-clique-lift}
\end{equation}
Thus \eqref{eq:T-clique-lift} gives $w(M,M)=1$, and therefore $W=1$ almost
everywhere on $A\times A$.
Likewise,
\begin{equation}
 0=\int_{T\times H}(1-W(x,y))\,\dd x\,\dd y
   =|T|\int_H\bigl(1-w(M,d(y))\bigr)\,\dd y.
\label{eq:T-H-lift}
\end{equation}
By \eqref{eq:T-H-lift}, $w(M,d(y))=1$ for almost every $y\in H$; hence
$W=1$ almost everywhere on $A\times H$.
\end{proof}

\begin{lemma}\label{lem:bernstein-sign}
For $0\le k\le n$, let
$B_{k,n}(t):=\binom nk t^k(1-t)^{n-k}$. If a polynomial $q$ of degree at
most $n$ has the exact Bernstein expansion
$q(t)=\sum_{k=0}^n c_kB_{k,n}(t)$
with $c_k\le0$ for every $k$, then $q(t)\le0$ for all $t\in[0,1]$. If all
coefficients are strictly negative, then $q(t)<0$ throughout $[0,1]$.
\end{lemma}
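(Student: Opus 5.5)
The plan is to use that each Bernstein basis polynomial is nonnegative on $[0,1]$ and that together they form a partition of unity.

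Fix $t\in[0,1]$. Both $t$ and $1-t$ are nonnegative, and $\binom nk\ge1$, so $B_{k,n}(t)\ge0$ for every $k$. Since each $c_k\le0$, every summand satisfies $c_kB_{k,n}(t)\le0$. The exact expansion then gives
\[
 q(t)=\sum_{k=0}^nc_kB_{k,n}(t)\le0 .
\]

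For the strict statement, I would invoke the binomial theorem:
\[
 \sum_{k=0}^nB_{k,n}(t)=(t+(1-t))^n=1 .
\]
Put $c_{\max}:=\max_kc_k$. If all coefficients are strictly negative, then $c_{\max}<0$. Since every $B_{k,n}(t)\ge0$, we get
\[
 q(t)\le c_{\max}\sum_kB_{k,n}(t)=c_{\max}<0
\]
for every $t\in[0,1]$. Equivalently, at least one $B_{k,n}(t)$ is strictly positive, so at least one summand is strictly negative.

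There is no genuine obstacle here. The only point needing care is the strict case, and the partition-of-unity identity settles it uniformly on $[0,1]$, including the endpoints $t=0$ and $t=1$, where only $B_{0,n}$, respectively $B_{n,n}$, is nonzero.
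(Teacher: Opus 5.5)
Your proof is correct and is essentially the paper's argument: nonnegativity of the $B_{k,n}$ on $[0,1]$ together with the partition of unity $\sum_k B_{k,n}(t)=1$ makes $q(t)$ a convex combination of the $c_k$, which gives both assertions. Your bound $q(t)\le\max_k c_k$ is exactly that convex-combination step written out.
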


\begin{proof}
For $0\le t\le1$, every $B_{k,n}(t)$ is nonnegative and
$\sum_{k=0}^nB_{k,n}(t)=(t+(1-t))^n=1$.
Thus $q(t)$ is a convex combination of the coefficients
$c_0,\ldots,c_n$, which proves both assertions.
\end{proof}

\subsection{The small-atom estimate}\label{sec:small-atom}

Assume $0<h\le1/2$, and let $A:=H$ be the clique degree atom supplied by
Theorem~\ref{thm:atomization}(i). Write $a:=|A|$, so
$0<a\le1/2$, and let $M>1/2$ be the common degree on $A$. In particular,
$A$ is the entire high-degree set and every vertex outside $A$ has degree at
most $1/2$. By degree measurability and
Lemma~\ref{lem:fractional-exclusion}, almost all rows with first endpoint in
$A$ have the same $0/1$ red neighborhood; denote this common neighborhood,
defined modulo null sets, by $N_A$. Write $S=N_A\setminus A$, with
$|S|=M-a$, and $L=\mathcal V\setminus N_A$, with $|L|=1-M$.
Vertices in $S$ are adjacent to all of $A$, so their degrees are at least
$a$. Since $M>a$, put $u=(M-a)^{-1}\int_Sd(y)\dd y$. If $M<1$, put
$x=(1-M)^{-1}\int_Ld(y)\dd y$;
if $M=1$, then $L$ is null and we set $x=0$. Every formula below contains
$x$ only through a factor $1-M$ in this boundary case. Then
\[
 a\le u\le\frac12,
 \qquad0\le x\le\frac12.
\]
Define
\begin{equation}
 h_{a,M}(t):=\gfun(t)+a(M-t)^2.
 \label{eq:h-small}
\end{equation}
The degree integral and the forced $A$--$S$ part of the Dirichlet energy are
\begin{align}
 \beta&=aM+\int_Sd+\int_Ld
       =aM+(M-a)u+(1-M)x,
 \label{eq:small-beta}\\
 \Ecal_W(d)&\ge
 \frac12\int_{A\times S}(M-d(y))^2\dd(x,y)
 +\frac12\int_{S\times A}(d(x)-M)^2\dd(x,y)
 =a\int_S(M-d(y))^2\dd y.
 \label{eq:small-energy}
\end{align}
Consequently, the total contribution of $S$ to
$\int\gfun(d)+\Ecal_W(d)$ is at least
\begin{equation}
 \int_Sh_{a,M}(d(y))\dd y
 \ge(M-a)\underline h_{a,M}(u),
\label{eq:small-S-loss}
\end{equation}
while
\begin{equation}
 \int_L\gfun(d(y))\dd y
 \ge(1-M)\gcvx(x).
\label{eq:small-L-loss}
\end{equation}
Here both inequalities follow from Theorem~\ref{thm:jensen} applied to the
corresponding lower convex envelopes, as in
\eqref{eq:envelope-jensen}. Substituting \eqref{eq:small-beta},
\eqref{eq:small-energy}, \eqref{eq:small-S-loss}, and
\eqref{eq:small-L-loss} into
\eqref{eq:dirichlet} gives
\begin{equation}
 \Phi(W)\le U(a,M,u,x),
 \label{eq:U-bound}
\end{equation}
where
\begin{align}
 U(a,M,u,x)
 ={}&\beta(1-\beta)-a\gfun(M)
 -(M-a)\underline h_{a,M}(u)
 -(1-M)\gcvx(x),
 \label{eq:U-def}\\
 \beta={}&aM+(M-a)u+(1-M)x.
 \label{eq:beta-small}
\end{align}

The lower convex envelope of $h_{a,M}$ on $[a,1/2]$ has a simple form. Assume
$0\le a<1/2$, and put
\begin{equation}
 c(a):=\frac{1+2a}{4}.
 \label{eq:c-a}
\end{equation}
Let $\ell_{a,M}$ be the line through
$(c(a),h_{a,M}(c(a)))$ and $(1/2,h_{a,M}(1/2))$. Then
\begin{equation}
 h_{a,M}(t)-\ell_{a,M}(t)
 =\frac{(1-2t)(4t-1-2a)^2}{32}.
 \label{eq:h-envelope-cert}
\end{equation}
Moreover,
\begin{equation}
 h_{a,M}''(t)=2(1+a-3t)\ge\frac{1-2a}{2}\ge0
 \qquad(a\le t\le c(a)).
\label{eq:h-envelope-convexity}
\end{equation}
The double zero in \eqref{eq:h-envelope-cert} at $t=c(a)$ shows that the
affine piece is tangent there, while \eqref{eq:h-envelope-convexity} gives
convexity of the first piece. The other zero in
\eqref{eq:h-envelope-cert} is at $t=1/2$. Hence the two pieces form a convex
minorant. Any convex minorant must lie below the chord
joining its values at $c(a)$ and $1/2$, and therefore below $\ell_{a,M}$ on
that interval. Consequently, for $a<1/2$,
\begin{equation}
 \underline h_{a,M}(t)=
 \begin{cases}
  h_{a,M}(t),&a\le t\le c(a),\\[1mm]
  \ell_{a,M}(t),&c(a)\le t\le1/2.
 \end{cases}
 \label{eq:h-envelope}
\end{equation}
For $a=1/2$, \eqref{eq:h-envelope} is understood in the preceding singleton
sense; only its first branch is present.

\begin{lemma}\label{lem:small-atom}
For
\[
 0<a\le\frac12,\qquad \frac12<M\le1,\qquad
 a\le u\le\frac12,\qquad 0\le x\le\frac12,
\]
the function $U$ defined in \eqref{eq:U-def}--\eqref{eq:beta-small} satisfies
\begin{equation}
 U(a,M,u,x)\le\frac3{20}.
 \label{eq:small-atom-bound}
\end{equation}
\end{lemma}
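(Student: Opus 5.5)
We bound the four-variable function $U$ by successive one-variable reductions, and then close with a polynomial sign certificate of the kind supplied by Lemma~\ref{lem:bernstein-sign}.

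\emph{Step 1: eliminate $x$.} We treat $x$ exactly as in Lemma~\ref{lem:F0-extended}. Since $\gcvx$ is convex and piecewise defined, the function $x\mapsto U$ is concave in $x$. Its derivative is
\[
 (1-M)\bigl(1-2\beta-\gcvx'(x)\bigr).
\]
On $x\ge1/4$ the bracket equals $11/16-2\beta$. Here $\beta\ge aM+(M-a)a$, and we expect this to be large enough that the derivative is $\le0$ in most of the range. In general, though, we simply maximize over $x$: for fixed $(a,M,u)$, the optimal $x$ solves $\gfun'(x)=1-2\beta$ on $[0,1/4]$ or sits at a kink. Rather than solving this exactly, we will use the crude bound
\[
 \beta(1-\beta)-(1-M)\gcvx(x)\le \max_x(\cdot),
\]
whose concave maximization can be written explicitly.

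\emph{Step 2: eliminate $u$.} Similarly, $u$ enters through $\beta$ and through $-(M-a)\underline h_{a,M}(u)$, which is concave in $u$. On the affine branch $u\ge c(a)$ the $u$-derivative is $(M-a)(1-2\beta-\ell'_{a,M})$. Using \eqref{eq:h-envelope-cert}, the slope $\ell'_{a,M}=h'_{a,M}(c(a))$ is explicit. We would check that this derivative is nonpositive whenever $\beta\ge$ a threshold, so that the optimum has $u\le c(a)$, where $\underline h=h$. After this step $U$ is dominated by the value of the original ``regular'' quantity
\[
 \beta(1-\beta)-a\gfun(M)-(M-a)\bigl(\gfun(u)+a(M-u)^2\bigr)-(1-M)\gfun(x),
\]
with $u\in[a,c(a)]$ and $x\le1/4$.

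\emph{Step 3: the decisive inequality.} The plan is to prove that
\[
 \frac3{20}-U\ge0
\]
after substituting the worst $x$ and $u$. I would first try to show monotonicity in $M$. The term $-a\gfun(M)-(M-a)a(M-u)^2$ penalizes large $M$, while $M>1/2$ with $a\le1/2$ forces an energy cost $a(M-u)^2\ge a(M-1/2)^2$. Intuitively, the small atom is strictly worse than the split construction, whose optimum is $\approx0.15008$. Note that $3/20<\pstar$, so the target bound has room only against small-atom configurations, which should be visibly suboptimal: e.g.\ $a\to0$ gives roughly $\max\beta(1-\beta)-\int\gfun$ with all degrees $\le1/2$, which is at most about $1/8+\sqrt3/72\approx0.149$.

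To make this rigorous, I would bound $\beta(1-\beta)\le$ its value at the stationary choice and reduce to a polynomial inequality in $(a,M)$ on the box $[0,1/2]\times[1/2,1]$. I would then rescale to $[0,1]^2$ and verify negativity of $U-3/20$ through a tensor Bernstein expansion with nonpositive coefficients, extending Lemma~\ref{lem:bernstein-sign} to two variables in the obvious way and subdividing the box if some coefficients are positive.

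\emph{Main obstacle.} The main obstacle is Step~3. The branches of $\gcvx$ and $\underline h_{a,M}$ create several cases ($x\lessgtr1/4$, $u\lessgtr c(a)$), and the interior maximization in $x$ involves a square root from $\gfun'(x)=1-2\beta$. I would avoid the square root by replacing $-(1-M)\gfun(x)$ with a tangent-line upper estimate at a fixed point near $x_*$. The margin above $3/20$ is small, so the certificate grid may need refinement near $a\approx1/2$, $M\approx1/2$.
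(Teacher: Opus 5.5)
Your proposal has a genuine gap. Steps~1--2 do not establish the reduction they announce, and Step~3 is a plan with no certificate.

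\textbf{Steps 1--2.} The claim that the maximum may be taken with $u\le c(a)$ and $x\le\frac14$ is false in the low-$\beta$ part of the domain. Your hedge ``whenever $\beta\ge$ a threshold'' leaves the complementary region with no bound at all.
\begin{itemize}
\item On the chord branch, $\partial U/\partial u=(M-a)\bigl(1-2\beta-\ell'_{a,M}\bigr)$ with $\ell'_{a,M}=(1+2a)(5+2a)/16-2aM$. Take $a\to0$, $M\downarrow\frac12$, $x=0$. Then $\beta\le\frac14+o(1)$ and $\ell'_{a,M}\to\frac5{16}$, so this derivative is about $\frac3{16}(M-a)>0$. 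By concavity in $u$, the maximizing $u$ is $\frac12>c(a)$.
\item On $x\ge\frac14$, the bracket $\frac{11}{16}-2\beta$ is positive whenever $\beta<\frac{11}{32}$ (e.g.\ $a,u$ small and $M$ near $\frac12$). So the optimal $x$ can be $\frac12$. There, replacing $\gcvx(x)$ by $\gfun(x)$ gives a lower bound for $U$, not an upper bound.
\end{itemize}

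\textbf{Step 3.} The proposed monotonicity in $M$ also fails. At the near-extremal corner $a=u=\frac12$, $x\approx0.15$, the function is a strictly concave quadratic in $M$ with interior maximizer $M\approx0.57$. The gap to $\frac3{20}$ there is only about $3\cdot10^{-4}$. You have not written down the tensor-Bernstein certificate, nor estimated the losses of your tangent-line and ``stationary-choice'' relaxations against this margin. As it stands, this is not a proof.

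\textbf{The missing idea.} $U$ has affine and convex structure in $a$ that can be exploited, so no multivariate certificate is needed.
\begin{itemize}
\item \emph{Branch $u\le c(a)$.} Put $\beta_0=Mu+(1-M)x$ and $U_0=\beta_0(1-\beta_0)-M\gfun(u)-(1-M)\gcvx(x)$. Then exactly $U=U_0+a(M-u)R$, with $R=1-M-u+u^2-2(1-M)x$ independent of $a$. Jensen for $\gcvx$ gives $U_0\le\beta_0(1-\beta_0)-\gcvx(\beta_0)\le\frac{153}{1024}$. This single bound settles the case $R\le0$ for all $x$ at once, which is exactly what covers the low-$\beta$ regions where your sign claims break.
\item If $R>0$, $U$ increases in $a$, so push $a$ up to $u$; the branch condition is preserved. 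Along the diagonal, $V'(a)=(M-a)\bigl(2R_a+(1-2a)(M-a)\bigr)>0$ while $R_a>0$. Since $R_a'=2a-1\le0$, once $R_a\le0$ the bound $\frac{153}{1024}$ applies again.
\item Everything thus reduces to the corner $a=u=\frac12$, i.e.\ to the two-variable $F_{\mathrm{aux}}(M,x)$. There, $x\ge\frac14$ is settled by the sign of $\partial_xF_{\mathrm{aux}}$. For $x\le\frac14$, maximize the concave quadratic in $M$ exactly and certify one univariate sextic with Lemma~\ref{lem:bernstein-sign}.
\item \emph{Chord branch $u>c(a)$.} Substitute $e=\frac12-a$, $m=M-\frac12$, $\tau=(M-a)(\frac12-u)$. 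This gives $U=F_{\mathrm{aux}}(M,x)-emR_0+\tau\bigl(e^2/4-R_0-\tau\bigr)$ with $R_0=\frac34-M-2(1-M)x$, which is convex in $e$. Hence the maximum sits at $u=c(a)$ (the first case) or at $a=0$. At $a=0$ one has $\underline h_{0,M}=\gcvx$, and Jensen again gives $\frac{153}{1024}$.
\end{itemize}
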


\begin{proof}
First suppose $u\le c(a)$, so $\underline h_{a,M}(u)=h_{a,M}(u)$. Define
\begin{equation}
 \beta_0=Mu+(1-M)x,\qquad
 U_0=\beta_0(1-\beta_0)-M\gfun(u)-(1-M)\gcvx(x).
\label{eq:small-U0-def}
\end{equation}
By $\gfun\ge\gcvx$ and Theorem~\ref{thm:jensen} applied to $\gcvx$,
\begin{equation}
 U_0\le \beta_0(1-\beta_0)-\gcvx(\beta_0)
 \le\frac{153}{1024}.
 \label{eq:U0}
\end{equation}
Here $0\le \beta_0\le1/2$. On $[0,1/4]$ the last function is
$t(1-t)^2\le9/64$, while on $[1/4,1/2]$ it is
$-t^2+11t/16+1/32$, whose maximum is $153/1024$ at $t=11/32$.
Put $\Delta=a(M-u)$, so that $\beta=\beta_0+\Delta$. Then
\begin{align}
 \beta(1-\beta)-\beta_0(1-\beta_0)
 &=\Delta(1-2\beta_0)-\Delta^2,\notag\\
 \frac{\gfun(M)-\gfun(u)}{M-u}
 &=M+u-M^2-Mu-u^2,\notag\\
 a(M-a)(M-u)^2+a^2(M-u)^2
 &=aM(M-u)^2.
\label{eq:small-affine-identities}
\end{align}
Using \eqref{eq:small-beta}, \eqref{eq:small-U0-def}, and
\eqref{eq:small-affine-identities}, we obtain
\begin{equation}
 U=U_0+a(M-u)R,
 \qquad
 R=1-M-u+u^2-2(1-M)x.
 \label{eq:U-affine}
\end{equation}
If $R\le0$, we are done. If $R>0$, then for fixed $M,u,x$ the expression in \eqref{eq:U-affine} is increasing in $a$. Since the feasible range has $a\le u$, we may increase $a$ to $u$. This stays in the same branch since
$u\le c(u)=(1+2u)/4$ if and only if $u\le1/2$.
Put $V(a)=U(a,M,a,x)$ and
$R_a=1-M-a+a^2-2(1-M)x$.
Then
\begin{equation}
 V'(a)=(M-a)\bigl(2R_a+(1-2a)(M-a)\bigr).
 \label{eq:V-prime}
\end{equation}
As long as $R_a>0$, \eqref{eq:V-prime} is positive. Moreover,
$R_a'=2a-1\le0$ on $[0,1/2]$. Thus, once $R_a\le0$, it remains
nonpositive and \eqref{eq:U-affine} together with \eqref{eq:U0} bounds all
subsequent values by $153/1024$. If $R_a$ remains positive, then $V$ is
increasing and its maximum occurs at $a=u=1/2$. Hence
\begin{equation}
 U\le\max\left\{\frac{153}{1024},F_{\mathrm{aux}}(M,x)\right\},
 \qquad
 F_{\mathrm{aux}}(M,x):=U\left(\frac12,M,\frac12,x\right).
\label{eq:F-reduction}
\end{equation}
By \eqref{eq:F-reduction}, it remains to bound $F_{\mathrm{aux}}$.

For $x\ge1/4$,
\begin{equation}
 \frac{\partial F_{\mathrm{aux}}}{\partial x}
 =(1-M)\left(\frac{11}{16}-2\beta\right)\le0,
 \qquad \beta=M-\frac14+(1-M)x.
\label{eq:F-aux-x-derivative}
\end{equation}
Here $\beta\ge3/8>11/32$, and equality in $\beta\ge3/8$ occurs only at
$M=1/2,x=1/4$. Hence \eqref{eq:F-aux-x-derivative} is strictly negative for
$M<1$, so the maximum is attained at $x=1/4$.
There
\begin{equation}
 \frac3{20}-F_{\mathrm{aux}}\left(M,\frac14\right)
 =\frac{100M^2-95M+23}{320}>0.
\label{eq:F-high-x}
\end{equation}
Thus \eqref{eq:F-high-x} handles the region $x\ge1/4$. The numerator is
positive for every real $M$, since its leading
coefficient is positive and its discriminant is
$95^2-4\cdot100\cdot23=-175<0$.
For $0\le x\le1/4$, $F_{\mathrm{aux}}$ is a strictly concave quadratic in
$M$, since direct differentiation gives
\begin{equation}
 \frac{\partial^2F_{\mathrm{aux}}}{\partial M^2}
 =-\frac{(2x-3)(2x-1)}2<0.
\label{eq:F-aux-concavity}
\end{equation}
By \eqref{eq:F-aux-concavity}, its unique stationary point is
\begin{equation}
 M_{\mathrm v}(x)
 =-\frac{2x^3-6x^2+7x-2}{(2x-3)(2x-1)}.
\label{eq:F-aux-stationary}
\end{equation}
Substituting \eqref{eq:F-aux-stationary} into $F_{\mathrm{aux}}$ gives the
unrestricted maximum
\begin{equation}
 F_{\mathrm{aux},\ast}(x)=
 \frac{16x^6-32x^5+32x^3-8x^2-16x+7}
 {16(2x-3)(2x-1)}.
\label{eq:F-star}
\end{equation}
Moreover, \eqref{eq:F-star} gives
\begin{equation}
 \frac3{20}-F_{\mathrm{aux},\ast}(x)
 =-\frac{P(x)}{80(2x-3)(2x-1)},
 \label{eq:P-gap}
\end{equation}
where
\begin{equation}
 P(x)=80x^6-160x^5+160x^3-88x^2+16x-1.
 \label{eq:P-small}
\end{equation}
With $t=4x\in[0,1]$ and
$B_{k,7}(t)=\binom7k t^k(1-t)^{7-k}$,
\begin{equation}
 P(t/4)=-B_{0,7}-\frac37B_{1,7}-\frac5{42}B_{2,7}
 -\frac{37}{672}B_{5,7}-\frac{29}{256}B_{6,7}
 -\frac{35}{256}B_{7,7}.
 \label{eq:P-Bernstein}
\end{equation}
Expanding \eqref{eq:P-Bernstein} verifies the identity with
\eqref{eq:P-small}. The coefficients of $B_{3,7}$ and $B_{4,7}$ are $0$,
and all remaining coefficients are negative. Hence
Lemma~\ref{lem:bernstein-sign} gives $P\le0$. Since
$(2x-3)(2x-1)>0$ for $0\le x\le1/4$, \eqref{eq:P-gap} therefore gives
$F_{\mathrm{aux}}\le3/20$.

It remains to consider $u>c(a)$; the equality case was included in the first
branch. In particular $a<1/2$. Put
\begin{equation}
 e=\frac12-a,\qquad m=M-\frac12,\qquad
 \tau=(M-a)\left(\frac12-u\right).
\label{eq:affine-parameters}
\end{equation}
Then $0\le \tau\le e(m+e)/2$. If
$R_0=3/4-M-2(1-M)x$,
then
\begin{equation}
 a=\frac12-e,\qquad M=\frac12+m,\qquad
 M-a=m+e,\qquad u=\frac12-\frac{\tau}{m+e}.
\label{eq:affine-substitution}
\end{equation}
On the present branch, the second line of \eqref{eq:h-envelope} is
\begin{equation}
 \underline h_{a,M}(u)
 =\frac{\frac12-u}{\frac12-c(a)}h_{a,M}(c(a))
  +\frac{u-c(a)}{\frac12-c(a)}h_{a,M}\left(\frac12\right).
\label{eq:h-affine-branch}
\end{equation}
Substituting \eqref{eq:affine-parameters}, \eqref{eq:affine-substitution},
and \eqref{eq:h-affine-branch}, and using \eqref{eq:c-a}, into
\eqref{eq:U-def}--\eqref{eq:beta-small} gives,
after collecting the terms in $e$ and $\tau$,
\begin{equation}
 U=F_{\mathrm{aux}}(M,x)-emR_0
 +\tau\left(\frac{e^2}{4}-R_0-\tau\right).
 \label{eq:U-convex-e}
\end{equation}
For fixed $M,x,\tau$, the right-hand side is convex in $e$, with second
derivative $\tau/2$. Since $M-a=m+e$ and
\eqref{eq:c-a} and \eqref{eq:affine-parameters} give
$c(a)=1/2-e/2$,
after adjoining the already-treated boundary $u=c(a)$, the closed condition
$u\ge c(a)$ is equivalent to
\[
 \frac12-u\le\frac e2,
 \qquad\text{or}\qquad
 \tau\le \frac{e(m+e)}2.
\]
Together with $0\le e\le1/2$, this gives a closed interval after adjoining the
boundary value $a=0$. Since $m>0$, the function
$e\mapsto e(m+e)$ is strictly increasing on $[0,1/2]$, so for fixed feasible
$\tau$ the allowed interval is
\[
 \left[e_{\min},\frac12\right],
 \qquad
 e_{\min}=\frac{\sqrt{m^2+8\tau}-m}{2}.
\]
Maximizing on this closure can only enlarge the original supremum. The
right-hand side of \eqref{eq:U-convex-e} therefore attains its maximum at an
endpoint. At the lower endpoint,
$\tau=e_{\min}(m+e_{\min})/2$, which is exactly $u=c(a)$ and has already
been treated. The upper endpoint is $e=1/2$, or $a=0$. At this endpoint the bound becomes a two-point
Jensen bound: here $\underline h_{0,M}=\gcvx$ on $[0,1/2]$, and
Theorem~\ref{thm:jensen} gives
\[
 U\le \beta(1-\beta)-\gcvx(\beta)\le\frac{153}{1024}.
\]
\end{proof}

\subsection{The majority estimate}\label{sec:majority-clique}

Suppose that the chosen extremal graphon has a highest-degree clique atom
$A$ of weight $a\ge1/2$ and degree $M$, completely joined to the high-degree
set. By degree measurability and
Lemma~\ref{lem:fractional-exclusion}, its rows share a common $0/1$ red
neighborhood modulo null sets; denote it by $N_A$. Write
$S=N_A\setminus A$, with $|S|=M-a$, and
$L=\mathcal V\setminus N_A$, with $|L|=1-M$.
For $y\in S$ we have $a\le d(y)\le M$, while $d(y)\le1/2$ on $L$. If
$M=a$, then $S$ is empty. Assume $M>a$. By \eqref{eq:h-small}, we have
\begin{equation}
 h_{a,M}''(t)=2(1+a-3t)\le2(1-2a)\le0
 \quad(a\le t\le M),
\label{eq:majority-concavity}
\end{equation}
so \eqref{eq:majority-concavity} shows that $h_{a,M}$ is concave on
$[a,M]$. Set
\begin{equation}
 z=\int_S\frac{M-d(y)}{M-a}\dd y,
 \qquad0\le z\le M-a.
 \label{eq:z-def}
\end{equation}
If $M<1$, set $x=(1-M)^{-1}\int_Ld(y)\dd y$.
If $M=1$, then $L$ is null; in that case take any $x\in[0,1/2]$, since every
subsequent expression is independent of $x$ after the factor $1-M$ is
inserted. For $t\in[a,M]$, concavity of $h_{a,M}$ gives
the endpoint-chord estimate
\begin{equation}
 h_{a,M}(t)\ge
 \frac{M-t}{M-a}h_{a,M}(a)
 +\frac{t-a}{M-a}h_{a,M}(M).
 \label{eq:majority-chord}
\end{equation}
The definition of $z$ is equivalent to
\begin{equation}
 \int_S(M-d(y))\dd y=z(M-a).
 \label{eq:z-moment}
\end{equation}
Thus integration of \eqref{eq:majority-chord} over $S$ assigns mass $z$ to
the endpoint $a$ and mass $M-a-z$ to the endpoint $M$. The endpoint values are
\begin{equation}
 h_{a,M}(a)=\gfun(a)+a(M-a)^2,\qquad
 h_{a,M}(M)=\gfun(M).
\label{eq:majority-endpoints}
\end{equation}
Combining \eqref{eq:majority-chord}, \eqref{eq:z-moment}, and
\eqref{eq:majority-endpoints}, then adding the contribution $a\gfun(M)$
from $A$, retaining the forced $A$--$S$ energy, and applying $\gcvx$ on $L$
gives the loss term below.
Also, \eqref{eq:z-moment} gives
\begin{equation}
 \int_Sd=M(M-a)-z(M-a).
\label{eq:majority-S-degree}
\end{equation}
Substituting \eqref{eq:majority-S-degree} into the degree integral gives
\begin{equation}
 \beta=aM+\int_Sd+(1-M)x
 =M^2-z(M-a)+(1-M)x.
\label{eq:majority-beta-computation}
\end{equation}
Thus \eqref{eq:majority-beta-computation} and the preceding loss estimate,
inserted into \eqref{eq:dirichlet}, give
\begin{align}
 \Phi(W)\le{}&\beta(1-\beta)-(M-z)\gfun(M)-z\gfun(a)
 -az(M-a)^2\quad-(1-M)\gcvx(x).
 \label{eq:majority-pre}
\end{align}
To obtain the next form, put
\begin{equation}
 \beta_0=M^2+(1-M)x,\qquad \delta=(M-a)z.
\label{eq:majority-beta0}
\end{equation}
Then $\beta=\beta_0-\delta$ and
\begin{equation}
 \beta(1-\beta)=\beta_0(1-\beta_0)
 +\delta(2\beta_0-1)-\delta^2.
\label{eq:majority-beta-expansion}
\end{equation}
Moreover,
\begin{align}
 &2\beta_0-1+
 \frac{\gfun(M)-\gfun(a)-a(M-a)^2}{M-a}=-\bigl((1-M)(1-2x)+a(2M-1)-M^2\bigr)
 =-\Hcal(a,M,x).
\label{eq:majority-H-identity}
\end{align}
Substituting \eqref{eq:majority-beta0},
\eqref{eq:majority-beta-expansion}, and
\eqref{eq:majority-H-identity} into \eqref{eq:majority-pre} gives
\begin{equation}
 \Phi(W)\le F_0(M,x)-(M-a)^2z^2-(M-a)\Hcal(a,M,x)z,
 \label{eq:majority-master}
\end{equation}
where
\begin{equation}
 \Hcal(a,M,x)=(1-M)(1-2x)+a(2M-1)-M^2.
 \label{eq:Hcal}
\end{equation}

\begin{lemma}\label{lem:negative-H}
Let
\[
 \frac12\le a<M\le1,\qquad 0\le x\le\frac12,
 \qquad 0\le z\le M-a.
\]
If $\Hcal(a,M,x)<0$, then
\begin{equation}
 F_0(M,x)-(M-a)^2z^2-(M-a)\Hcal(a,M,x)z
 \le\frac3{20}.
\label{eq:negative-H-bound}
\end{equation}
Consequently, \eqref{eq:majority-master} and
\eqref{eq:negative-H-bound} imply $\Phi(W)\le3/20$.
\end{lemma}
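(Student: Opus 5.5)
The plan is to eliminate $z$ by one-variable maximization, and then bound the resulting function of $(a,M,x)$ using Lemma~\ref{lem:F0-extended} together with the explicit polynomial tools already developed (Lemma~\ref{lem:bernstein-sign}).

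\textbf{Step 1: optimize in $z$.} Put $s:=M-a>0$ and $\kappa:=-\Hcal(a,M,x)>0$. The left-hand side of \eqref{eq:negative-H-bound} is $F_0(M,x)+g(z)$ with $g(z)=-s^2z^2+s\kappa z$, a concave quadratic in $z$. Its unconstrained maximizer is $z_0=\kappa/(2s)$, where the value is $\kappa^2/4$. Hence
\[
 \max_{0\le z\le s}g(z)=
 \begin{cases}
 \kappa^2/4, & \kappa\le 2s^2,\\
 s^2\kappa-s^4, & \kappa>2s^2.
 \end{cases}
\]
In either case the maximum is at most $\min\{\kappa^2/4,\ s^2\kappa\}$. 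So it suffices to prove $F_0(M,x)+\min\{\kappa^2/4,s^2\kappa\}\le 3/20$ on the region $\Hcal<0$.

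\textbf{Step 2: control $\kappa$ and locate the region.} Since $\partial\Hcal/\partial a=2M-1\ge0$, $\Hcal$ is nondecreasing in $a$. Therefore, using $a\ge 1/2$,
\[
\kappa\le -\Hcal(1/2,M,x)=M^2-\tfrac12+2x(1-M)-(1-M)+(1-M)=M^2-\tfrac12-(1-M)(1-2x)+\dots
\]
After simplification this gives an explicit bound $\kappa\le\kappa_{\max}(M,x)$ that is affine in $x$. The inequality $\Hcal<0$ also constrains $M$ from below in terms of $x$: at $a=M$ one has $\Hcal(M,M,x)=(1-M)(1-M-2x)$, which is positive for $x<(1-M)/2$. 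Consequently the region $\Hcal<0$ lies away from the extremal pair $(a_*,x_*)$ of Lemma~\ref{lem:regular-max}, where $1-a_*-2x_*>0$. This is the source of the slack: near the maximizer of $F_0$, the extra term must be small.

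\textbf{Step 3: case split as in Lemma~\ref{lem:F0-extended}.}
\begin{itemize}
\item If $x\ge1/4$, then $F_0\le 19/128$. It remains to check that $19/128+\kappa_{\max}^2/4\le3/20$ wherever $\Hcal<0$ is possible, or else to use the derivative bound \eqref{eq:F0-high-derivative} together with the explicit $\kappa$ to show the sum is decreasing in $x$, reducing to $x=1/4$.
\item If $x\le1/4$ and $x>1-M$, then $F_0\le15/128$, which leaves ample room. Here I would just bound $\kappa\le M^2-1/2+2x(1-M)\le 1/2$ crudely, possibly refined using $M>3/4$.
\item The remaining case $x\le\min\{1/4,1-M\}$ has $F_0=F(M,x)$. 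Here I would substitute $a=1/2$ (the worst case for $\kappa$) and keep $s=M-a\le M-1/2$. I would then bound $F(M,x)+\min\{\kappa^2/4,s^2\kappa\}$ as a polynomial on the explicit region cut out by $\Hcal(1/2,M,x)<0$. The final sign verification would go through a Bernstein expansion after affinely rescaling the region to a box, in the style of \eqref{eq:P-Bernstein}.
\end{itemize}

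\textbf{Main obstacle.} I expect the hardest part to be the last case near the boundary curve $\Hcal=0$. There $\kappa$ is small, so the margin must come from $F(M,x)$ itself being well below $p_*$ along that curve. I would first parametrize the curve $\Hcal(a,M,x)=0$, check numerically that $F_0$ on it stays below about $0.148$, and only then decide whether a single Bernstein certificate in $(M,x)$ suffices or whether a further subdivision in $M$ is needed.
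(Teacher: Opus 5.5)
\textbf{There is a genuine gap in Step~3.} Your Step~1 and the bound $\kappa\le-\Hcal(\tfrac12,M,x)=M^2-\tfrac12+2(1-M)x$ agree with the paper. (Your displayed simplification of that expression is garbled.) The paper simply uses the unconstrained maximum $\kappa^2/4$ together with $\Hcal\ge\Hcal_0:=\Hcal(\tfrac12,\cdot,\cdot)$.

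The problem is that in two of your three cases you bound $F_0$ by its regional maximum from Lemma~\ref{lem:F0-extended}, and the $z$-term separately. This cannot work.
\begin{itemize}
\item At $a=\tfrac12$, $M=1$ one has $s=\kappa=\tfrac12$, so the $z$-term attains $\kappa^2/4=\tfrac1{16}=\tfrac{40}{640}$ for every $x$.
\item The available margins are only $\tfrac3{20}-\tfrac{19}{128}=\tfrac1{640}$ when $x\ge\tfrac14$, and $\tfrac3{20}-\tfrac{15}{128}=\tfrac{21}{640}$ when $x>1-M$.
\item The inequality holds there only because $F_0(M,x)\to0$ as $M\to1$. The two regional maxima occur at different points, so no refinement that bounds the terms separately (for instance using only $M>\tfrac34$) can succeed. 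Your fallback for $x\ge\tfrac14$, showing the sum is monotone in $x$, is the right idea.
\end{itemize}
In the remaining case, "affinely rescaling the region to a box" is not available. The region $\{\Hcal_0<0\}$ is bounded by the curve $x=x_0(M)=(\tfrac12-M^2)/(2(1-M))$. Its bounding box contains $(a_*,x_*)$, where $F=\pstar>\tfrac3{20}$ and hence $F+\Hcal_0^2/4>\tfrac3{20}$. So a box Bernstein certificate of the kind you describe cannot exist for that polynomial. Also, $\min\{\kappa^2/4,s^2\kappa\}$ is not a polynomial.

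\textbf{The missing idea.} Study the single function $W_0=F_0+\Hcal_0^2/4$ on $\{\Hcal_0\le0\}$ and differentiate in $x$. Since $\partial_x(\Hcal_0^2/4)=-(1-M)\Hcal_0$, the $x$-dependence entering through $1-2\beta_0$ cancels. This leaves
\begin{itemize}
\item $\partial_xW_0=(M-1)(16M^2-3)/16\le0$ for $x\ge\tfrac14$;
\item $\partial_xW_0=\tfrac{1-M}{2}(1-2M^2-4x+6x^2)$ for $x\le\tfrac14$, whose bracket is decreasing in $x$ and already nonpositive at $x=\max\{x_0(M),0\}$.
\end{itemize}
Hence the maximum lies at one of three places:
\begin{itemize}
\item at $x=\tfrac14$;
\item on the curve $\Hcal_0=0$, where the correction vanishes and only $F_0$ remains (your ``main obstacle'' paragraph anticipates this but gives no argument that the maximum is actually there);
\item at $x=0$ when $M\ge1/\sqrt2$.
\end{itemize}
What remains is three one-variable polynomial inequalities: a cubic, a sextic in $M$ certified by a Bernstein expansion, and a quartic. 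No split at $x=1-M$ is needed, since $\gcvx=\gfun$ on $[0,\tfrac14]$.

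\textbf{A minor logical slip.} The positivity of $\Hcal(M,M,x)$ does not show that the region avoids $(a_*,x_*)$. Since $\Hcal$ increases in $a$, that value is an upper bound. The correct reason is $\Hcal\ge\Hcal_0$ together with $\Hcal_0(a_*,x_*)>0$.
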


\begin{proof}
For $y\in\mathbb R$, write $y^-:=\max\{-y,0\}$. Put
\begin{equation}
 \Hcal_0(M,x)=\Hcal\left(\frac12,M,x\right)
 =\frac12-M^2-2(1-M)x.
\label{eq:H0}
\end{equation}
By \eqref{eq:Hcal} and \eqref{eq:H0}, since $a\ge1/2$ and $M\ge1/2$, we have
$\Hcal(a,M,x)\ge\Hcal_0(M,x)$. Moreover, $M>a$ in the present branch, and
maximizing the quadratic in $z$ without its interval constraint gives
\begin{equation}
 -(M-a)^2z^2-(M-a)\Hcal(a,M,x)z
 \le\frac{(\Hcal(a,M,x)^-)^2}{4}
 \le\frac{(\Hcal_0^-)^2}{4}.
\label{eq:negative-H-quadratic}
\end{equation}
Consequently, \eqref{eq:negative-H-quadratic} gives
\begin{equation}
 F_0(M,x)-(M-a)^2z^2-(M-a)\Hcal(a,M,x)z
 \le W_0(M,x):=F_0(M,x)+\frac{(\Hcal_0^-)^2}{4}.
 \label{eq:W0}
\end{equation}
It remains to prove $W_0\le3/20$ in the region $\Hcal_0\le0$.

If $x\ge1/4$, then
\begin{equation}
 \Hcal_0(M,x)\le\Hcal_0\left(M,\frac14\right)
 =M\left(\frac12-M\right)\le0.
\label{eq:H0-high-x}
\end{equation}
Thus \eqref{eq:H0-high-x} shows that the negative-part term in
\eqref{eq:W0} is active throughout this region.
Differentiation gives
\begin{equation}
 \frac{\partial W_0}{\partial x}
 =\frac{(M-1)(16M^2-3)}{16}\le0.
\label{eq:W0-high-derivative}
\end{equation}
Thus \eqref{eq:W0-high-derivative} shows that the maximum occurs at $x=1/4$,
and
\begin{equation}
 \frac{19}{128}-W_0\left(M,\frac14\right)
 =-\frac{(2M-1)(16M^3-40M^2+12M+1)}{128}\ge0.
\label{eq:W-high-x}
\end{equation}
To verify the sign in \eqref{eq:W-high-x}, observe that the cubic factor is
decreasing on $[1/2,1]$ and equals $-1$ at $M=1/2$. Indeed, its derivative
is given by \eqref{eq:W-high-cubic-derivative}:
\begin{equation}
 4(2M-3)(6M-1)<0
 \qquad(1/2\le M\le1).
\label{eq:W-high-cubic-derivative}
\end{equation}

Suppose $x\le1/4$. Then
\begin{equation}
 \frac{\partial W_0}{\partial x}
 =\frac{1-M}{2}(1-2M^2-4x+6x^2).
 \label{eq:W-low-derivative}
\end{equation}
If $M\le1/\sqrt2$, the condition $\Hcal_0\le0$ gives
$x\ge x_0(M):=(1/2-M^2)/(2(1-M))$.
On $1/2\le M\le1/\sqrt2$, the numerator and denominator in this expression
are nonnegative and positive, respectively, so $x_0(M)\ge0$. Moreover,
$x_0(M)\le1/4$ is equivalent to $M^2\ge M/2$, which holds since
$M\ge1/2$.
The bracket in \eqref{eq:W-low-derivative} is decreasing in $x$ on
$[0,1/4]$, and at $x=x_0(M)$ it equals
\begin{equation}
 -\frac{(2M^2-1)(2M^2-8M+3)}{8(M-1)^2}\le0.
\label{eq:W-low-at-x0}
\end{equation}
Thus \eqref{eq:W-low-derivative} and \eqref{eq:W-low-at-x0} show that the
derivative is nonpositive from $x_0(M)$ onward, so the maximum
occurs at $x=x_0(M)$. There
\begin{equation}
 \frac3{20}-W_0(M,x_0(M))
 =-\frac{P_6(M)}{320(M-1)^2},
 \label{eq:P6-gap}
\end{equation}
where
\begin{equation}
 P_6(M)=200M^6-720M^5+940M^4-560M^3+142M^2-4M-3.
 \label{eq:P6}
\end{equation}
After $M=1/2+t/4$, an exact change to the degree-$6$ Bernstein basis gives
\begin{align}
 P_6\left(\frac12+\frac t4\right)
 ={}&-\frac18B_{0,6}(t)-\frac5{48}B_{1,6}(t)
 -\frac{41}{480}B_{2,6}(t)-\frac{17}{320}B_{3,6}(t)-\frac{13}{640}B_{4,6}(t)-\frac{37}{768}B_{5,6}(t)
 -\frac{111}{512}B_{6,6}(t).
 \label{eq:P6-Bernstein}
\end{align}
Expanding \eqref{eq:P6-Bernstein} verifies the identity with
\eqref{eq:P6} after substituting $M=1/2+t/4$. All coefficients are strictly
negative. The range $1/2\le M\le1/\sqrt2$ corresponds to a subinterval of
$0\le t\le1$, so Lemma~\ref{lem:bernstein-sign} gives $P_6<0$ throughout
the required range. Since $(M-1)^2>0$ there, \eqref{eq:P6-gap} indeed gives
$3/20-W_0(M,x_0(M))>0$.

If $M\ge1/\sqrt2$, the same bracket is already nonpositive at $x=0$ and
decreases on $[0,1/4]$. Hence the maximum is at $x=0$. Finally,
\begin{equation}
 \frac3{20}-W_0(M,0)
 =-\frac{20M^4-80M^3+60M^2-7}{80}\ge0.
\label{eq:W-x0}
\end{equation}
To verify \eqref{eq:W-x0}, note that the quartic in the numerator is
decreasing on
$[1/\sqrt2,1]$: its derivative is given by
\eqref{eq:W-x0-quartic-derivative}:
\begin{equation}
 40M(2M^2-6M+3)<0
 \qquad(1/\sqrt2\le M\le1),
\label{eq:W-x0-quartic-derivative}
\end{equation}
since $2M^2-6M+3$ is decreasing on this interval and already equals
$4-3\sqrt2<0$ at its left endpoint. Its value at $1/\sqrt2$ is
$28-20\sqrt2<0$.
\end{proof}

\begin{theorem}\label{thm:high-collapse}
For the chosen extremal graphon, the essential degree support
in $(1/2,1]$ contains at most one value.
\end{theorem}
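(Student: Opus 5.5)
The statement is immediate in the degenerate cases, so the plan concentrates on a majority clique. If $h=0$ there is nothing to prove. If $0<h\le1/2$, Theorem~\ref{thm:atomization}(i) already says that $d$ is almost everywhere constant on $H$. So we assume $h>1/2$ and take the clique fibre $A=\{d=M\}$ from Theorem~\ref{thm:atomization}(ii), with $a=|A|\ge1/2$, completely joined to $H$. It suffices to prove that $H\setminus A$ is null. Suppose not.

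The first step is to show that every high-degree row contains all of $H$. Fix $r\in D_\circ\cap(1/2,M)$.
\begin{itemize}
\item By Lemma~\ref{lem:upper-set}, $N_r$ is a degree-saturated upper ray with $|N_r|=r>1/2$.
\item By Lemma~\ref{lem:median-core} it contains every top-half set $T$. Choosing $T\subseteq A$ (possible since $|A|\ge1/2$) and using saturation, $N_r\supseteq A$.
\item To get $N_r\supseteq H$, I would use the KKT sign from Lemma~\ref{lem:first-order} on pairs $(r,v)$ with $v\in H$. Since $A$ is joined to $H$, $L(M,v)\ge0$.
\item By \eqref{eq:Lz} and the monotone representative of $\chi$ from Lemma~\ref{lem:half-slope},
\[
 L(r,v)-L(M,v)=(r-M)\Bigl(v-\frac12\Bigr)+\chi(M)-\chi(r).
\]
\item This difference does not have an obvious sign. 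So I would argue through the nesting $N_r\subseteq N_M$ together with the no-inversion property \eqref{eq:row-no-inversion}. The aim is to show that a high-degree vertex missing from $N_r$ would have its own row missing $r$. Because both rows are upper rays of mass greater than $1/2$, this should contradict the degree ordering.
\end{itemize}

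Once all high rows contain $H$, I would repeat the difference-band argument from the proof of Theorem~\ref{thm:atomization}. For $\mu^2$-almost every pair $1/2<r<u$ in $H$, set $D=N_u\setminus N_r$. Then $D\subseteq H^c$, so $d\le1/2$ on $D$, and \eqref{eq:difference-band} gives $B(u)-B(r)\ge(u-r)/2$. Lemma~\ref{lem:half-slope} gives the reverse inequality, so $\chi$ is constant on the high-degree part of $H$, hence on all of $H$.

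Next I would collapse $H$ with $P_H=\mathbb E[\cdot\mid\mathcal G_H]$. Since every high row contains $H$, the set $H$ is a clique. Writing $z=d-P_Hd$, which is supported on $H$ with mean zero there, we get $\ip{z}{Wz}=0$. Constancy of $\chi$ gives $2\ip{z}{B}=\|z\|_2^2$. Then \eqref{eq:quotient-identity} shows the collapse costs nothing, and \eqref{eq:strict-quotient} forces $z=0$. So $d$ is constant on $H$, which is the claim.

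The main obstacle is the first step, ruling out a high-degree vertex outside some high row $N_r$. If the direct KKT and no-inversion argument fails, the fallback is quantitative. Lemma~\ref{lem:negative-H} and the bound \eqref{eq:majority-master} from Section~\ref{sec:majority-clique} control $\Phi$ whenever $S\ne\varnothing$ and $\Hcal<0$. In the complementary regime $\Hcal\ge0$, I would try a local perturbation, namely merging a thin slice of $S\cap H$ into $A$ via Lemma~\ref{lem:two-set-quotient}. The goal is to show this strictly improves $\Phi$ or lowers $\Dcal$, which contradicts the choice of the chosen extremal graphon.
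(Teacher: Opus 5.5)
Your proposal has a genuine gap, and it is where you suspected: the claim that every high row $N_r$ contains $H$. Since $|N_r|=r$ and $|H|=h$, that inclusion forces $r\ge h$ for every high degree $r$. In other words it already says that $H$ is a clique, which is essentially the content of the theorem. Nesting, no-inversion and the degree ordering cannot deliver it. Take a clique $A$ of mass $a>1/2$, an independent set $S$ of mass $s>0$ joined completely to $A$ and to nothing else, and any graph on the remaining mass $1-a-s<1/2$. Then $d=a+s$ on $A$ and $d=a>1/2$ on $S$. The rows $N_a=A$ and $N_{a+s}=A\cup S$ are nested, degree-saturated upper rays of mass equal to the degree, and they satisfy \eqref{eq:row-no-inversion}. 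Yet $N_a\not\supseteq S\subseteq H$. The symmetry observation, that a high vertex missing from $N_r$ has a row missing the $r$-vertices, produces no contradiction here either.

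Your fallback for $\Hcal\ge0$ also fails. At a maximizer, Lemma~\ref{lem:half-slope} already makes the first-order coefficient $\delta\Delta\bigl(B(M)-B(r)-\Delta/2\bigr)$ in Lemma~\ref{lem:two-set-quotient} nonpositive. For slices of exact fibres the remaining term is $\ip{z}{Wz}=\tfrac{\Delta^2\delta^2}{4}\bigl(w(r,r)-1\bigr)\le0$, because $A$ is a clique joined to $S$. So merging a slice of $S\cap H$ into $A$ can never raise $\Phi$. It could only be exploited through $\Dcal$ if both quantities vanish, and nothing forces that. In the example above, $B(M)-B(a)=s(1-a)<s/2$, so the merge strictly loses.

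The missing idea is that no perturbation is needed in this regime. When $\Hcal\ge0$, \eqref{eq:majority-master} gives $\Phi(W)\le F_0(M,x)-(M-a)^2z^2$, and Lemma~\ref{lem:F0-extended} gives $F_0(M,x)\le\pstar$. On the other hand $\Phi(W)\ge\pstar$, because the split-regular graphon of Lemma~\ref{lem:regular-max} attains $\pstar$. Hence $z=0$ when $M>a$, so by \eqref{eq:z-def} $d=M$ almost everywhere on $S$. Since $A$ is the entire fibre $\{d=M\}$ and is disjoint from $S$, $S$ is null, i.e.\ $M=a$. Then $H\subseteq N_A=A$, because $A$ is joined to all of $H$. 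Together with your $\Hcal<0$ case (Lemma~\ref{lem:negative-H} against \eqref{eq:rational-benchmark}), this is the paper's whole argument for $h>1/2$. The difference-band and collapse steps are not needed there.
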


\begin{proof}
If $h=0$, the high-degree support is empty and the claim is trivial. If
$0<h\le1/2$, the claim is Theorem~\ref{thm:atomization}(i). Assume $h>1/2$ and
take the majority clique atom $A$ from Theorem~\ref{thm:atomization}(ii).

If $M=a$, then $S=N_A\setminus A$ is null. Since $A$ is completely joined to
$H$, every high-degree point lies in $N_A$, so $H=A$ modulo a null set and the
claim follows. Hence assume $M>a$ below.

If $\Hcal(a,M,x)<0$, Lemma~\ref{lem:negative-H} gives
$\Phi(W)\le3/20$. This is impossible for a maximizer since the rational
split-regular construction in \eqref{eq:rational-benchmark} has larger value.

If $\Hcal(a,M,x)\ge0$, then \eqref{eq:majority-master} and
\eqref{eq:F0-bound} give
$
 \Phi(W)\le F_0(M,x)\le\pstar.
$ 
The split-regular construction from Lemma~\ref{lem:regular-max} gives a lower bound
$\pstar$, so equality is required throughout. In particular, $z=0$. If
$M>a$, then \eqref{eq:z-def} implies $d=M$ almost everywhere on $S$,
contradicting the fact that $A$ is the full degree-$M$ fiber and
$S\cap A=\varnothing$. Therefore $S=\varnothing$ and $M=a$. Since $A$ was
completely joined to every high-degree point, it follows that $H=A$ modulo a
null set.
\end{proof}

\subsection{Proof of Theorem~\ref{thm:structural-reduction}}

\begin{proof}
Let $H=\{d>1/2\}$ and $h=|H|$. Suppose first that $h=0$. Then
$0\le d\le1/2$ and $0\le\beta\le1/2$. Lemma~\ref{lem:dirichlet},
$\Ecal_W(d)\ge0$, $\gfun\ge\gcvx$, and Theorem~\ref{thm:jensen} give
\begin{align}
 \Phi(W)
 &\le\beta(1-\beta)-\int_0^1\gfun(d(t))\dd t\notag\\
 &\le\beta(1-\beta)-\int_0^1\gcvx(d(t))\dd t\notag\\
 &\le\beta(1-\beta)-\gcvx(\beta)
 \le\frac{153}{1024}<\frac3{20}.
 \label{eq:no-high-bound}
\end{align}
For the last maximum, \eqref{eq:g-envelope} gives
\begin{equation}
 t(1-t)-\gcvx(t)=
 \begin{cases}
  t(1-t)^2,&0\le t\le1/4,\\[1mm]
  -t^2+\dfrac{11t}{16}+\dfrac1{32},&1/4\le t\le1/2.
 \end{cases}
\label{eq:no-high-envelope-gap}
\end{equation}
By \eqref{eq:no-high-envelope-gap}, the second quadratic has its maximum
$153/1024$ at $t=11/32$, while
the first branch is at most $9/64<153/1024$. This contradicts the lower
bound \eqref{eq:rational-benchmark} for a maximizer in view of
\eqref{eq:no-high-bound}.

If $0<h\le1/2$, Theorem~\ref{thm:atomization}(i) makes $H$ a single clique
degree atom of weight at most $1/2$. Equations \eqref{eq:U-bound} and
\eqref{eq:small-atom-bound} then give $\Phi(W)\le3/20$, again contradicting
\eqref{eq:rational-benchmark}. Consequently,
\begin{equation}
 h>\frac12.
\label{eq:high-majority}
\end{equation}

By \eqref{eq:high-majority} and Theorem~\ref{thm:high-collapse}, the
essential degree support on
$H$ consists of one value $M$. Thus $A:=H$ is the full degree-$M$ fiber,
and Theorem~\ref{thm:atomization}(ii) shows that $A$ is a clique. Put
$a=|A|=h$ and $B=[0,1]\setminus A$. Lemma~\ref{lem:fractional-exclusion}
and \eqref{eq:fractional-low}, together with
\eqref{eq:degree-measurable} imply that $W$ is $0/1$-valued on
$A\times B$. For almost every $y\in B$ there is a value
$w(M,d(y))\in\{0,1\}$ such that $W(x,y)=w(M,d(y))$ for almost every
$x\in A$. If this value were $1$, then
$d(y)\ge|A|=a>1/2$.
This would put $y$ in $H=A$, a contradiction. Hence $W=0$ almost
everywhere on $A\times B$. Since $W=1$ on $A^2$, we have
$d(x)=\int_A1\dd y=a$ for almost every $x\in A$.
Finally, $B=H^c$ gives $d\le1/2$ there. This proves
\eqref{eq:structural-reduction}.
\end{proof}

\section{Proof of Theorem~\ref{thm:main}}\label{sec:main-proof}

\begin{proof}
Let $W$ be the chosen extremal graphon supplied by
Theorem~\ref{thm:structural-reduction}, and write
$W=K_A\sqcup W_B$, where $a=|A|>1/2$ and $|B|=1-a$. If $a=1$, then
$W=1$ almost everywhere and $\Phi(W)=0\le\pstar$, so the upper bound is
immediate. We may therefore assume $a<1$ for the remainder of the upper-bound
argument and put $x:=(1-a)^{-1}\int_Bd(t)\dd t$;
since $d=a$ on $A$ and $d\le1/2$ on $B$,
\begin{equation}
 \beta=\int_0^1d(t)\dd t=a^2+(1-a)x,
 \qquad 0\le x\le\frac12.
 \label{eq:final-beta}
\end{equation}
Moreover, $\gfun\ge\gcvx$ on $[0,1/2]$, and
Theorem~\ref{thm:jensen} applied on $B$ gives
\begin{equation}
 \int_B\gfun(d(t))\dd t
 \ge\int_B\gcvx(d(t))\dd t
 \ge(1-a)\gcvx(x).
 \label{eq:final-jensen}
\end{equation}
Using \eqref{eq:final-beta} and \eqref{eq:final-jensen} in
\eqref{eq:dirichlet}, and discarding only the nonnegative term
$\Ecal_W(d)$, yields
\begin{align}
 \Phi(W)
 &=\beta(1-\beta)-a\gfun(a)
   -\int_B\gfun(d(t))\dd t-\Ecal_W(d)\notag\\
 &\le\beta(1-\beta)-a\gfun(a)-(1-a)\gcvx(x)
 =F_0(a,x)\le\pstar,
 \label{eq:final-bound}
\end{align}
where the last inequality is \eqref{eq:F0-bound}.

For the reverse inequality, take intervals $A,B$ of measures $a_*$ and
$1-a_*$, put $W=1$ on $A^2$, $W=0$ on $A\times B$, and
$W=\lambda_*$ on $B^2$. Then $d=a_*$ on $A$ and
$d=(1-a_*)\lambda_*=x_*$ on $B$, so
\begin{equation}
 \Phi(W)=F(a_*,x_*)=\pstar.
\label{eq:lower-graphon-value}
\end{equation}
Lemma~\ref{lem:sequence-bridge},
\eqref{eq:lower-graphon-value}, and \eqref{eq:final-bound} therefore give
\begin{equation}
 I(H_3)=\sup_{W\text{ finite weighted}}\Phi(W)
 =\max_{W\text{ graphon}}\Phi(W)=\pstar.
\label{eq:main-value}
\end{equation}

It remains to realize the lower bound in \eqref{eq:main-value} by ordinary
graphs. Let
$s_n=\lfloor a_*n\rfloor$ and $m_n=n-s_n$, and choose an integer
$q_n=x_*n+O(1)$ such that $m_nq_n$ is even; changing
$\lfloor x_*n\rfloor$ by at most one is enough. Since
$0<x_*<1-a_*$, for all sufficiently large $n$ we have
$0\le q_n<m_n$. A simple $q_n$-regular graph can be constructed on
$\mathbb Z/m_n\mathbb Z$ as follows. If $q_n$ is even, join each vertex to
the $q_n/2$ nearest vertices in each cyclic direction. If $q_n$ is odd, then
$m_n$ is even since $m_nq_n$ is even; use the same construction with
$(q_n-1)/2$ neighbors in each direction and add the antipodal perfect
matching. Denote the resulting graph by $R_n$, and set
$G_n=K_{s_n}\sqcup R_n$.
Its red edge density satisfies
\begin{align}
 \beta_{G_n}
 &=\frac{s_n(s_n-1)+m_nq_n}{n(n-1)}
 =a_*^2+(1-a_*)x_*+o(1).
\label{eq:construction-beta}
\end{align}
All clique vertices have normalized degree $(s_n-1)/n=a_*+o(1)$, and all
vertices of $R_n$ have normalized degree $q_n/n=x_*+o(1)$. Every edge has
endpoints of equal normalized degree, so the edge-energy term in
\eqref{eq:finite-dirichlet} is exactly zero. Hence
\eqref{eq:construction-beta} and \eqref{eq:finite-dirichlet} give
\begin{align*}
 p(H_3,G_n)
 &={}\bigl(a_*^2+(1-a_*)x_*\bigr)
       \bigl(1-a_*^2-(1-a_*)x_*\bigr)-a_*\gfun(a_*)-(1-a_*)\gfun(x_*)+o(1)\\
 &=F(a_*,x_*)+o(1)=\pstar+o(1).
\end{align*}
Finally,
$q_n/m_n=x_*/(1-a_*)+o(1)=\lambda_*+o(1)$, which gives the asserted
relative degree inside $R_n$.
\end{proof}

\section{Declaration on the use of AI}

The author used generative AI tools to assist with discussing proof
strategies, proof checking, and exposition. All mathematical arguments,
results, and conclusions were reviewed and verified by the author.

\end{document}